\providecommand{\U}[1]{\protect\rule{.1in}{.1in}}
\newtheorem{theorem}{Theorem}[section]
\newtheorem{proposition}[theorem]{Proposition}
\newtheorem{corollary}[theorem]{Corollary}
\newtheorem{problem}[theorem]{Problem}
\newtheorem{definition}[theorem]{Definition}
\numberwithin{equation}{section}
\begin{document}
\author{Daniel Pellegrino and Joedson Santos}
\address[D. Pellegrino]{ Departamento de Matem\'{a}tica, Universidade Federal da Paraíba, 58038-310 - Jo\~{a}o Pessoa, Paraíba, Brazil\\
[J. Santos] {Departamento de Matem\'{a}tica, Universidade Federal de Sergipe,
49.500-000 - Itabaiana, Sergipe, Brazil.}}
\title[Absolutely summing multilinear operators: a panorama]{Absolutely summing multilinear operators: a panorama }
\keywords{Absolutely summing operators; multiple summing multilinear operators; strongly multiple summing multilinear operators; multi-ideals}

%\dedicatory{}

\thanks{2010 Mathematics Subject Classification:46G25, 47B10, 47L20, 47L22.}
\thanks{Daniel Pellegrino is supported by CNPq (Edital Casadinho) Grant 620108/2008-8}

\begin{abstract}
This paper has a twofold purpose: to present an overview of the theory of
absolutely summing operators and its different generalizations for the
multilinear setting, and to sketch the beginning of a research project related
to an objective search of \textquotedblleft perfect\textquotedblright%
\ multilinear extensions of the ideal of absolutely summing operators. The
final section contains some open problems that may indicate lines for future investigation.

\end{abstract}
\maketitle

\section{Introduction}

Absolutely summing multilinear operators and homogeneous polynomials between
Banach spaces were first conceived by A. Pietsch \cite{PPPP, pp22} in the
eighties. Pietsch%
%TCIMACRO{\U{b4}}%
%BeginExpansion
\'{}%
%EndExpansion
s work and R. Alencar and M.C. Matos' research report \cite{AlencarMatos} are
usually quoted as the precursors of the now well-known nonlinear theory of
absolutely summing operators. In the last decade this topic of investigation
attracted the attention of many authors and various different concepts related
to summability of nonlinear operators were introduced; this line of research,
besides its intrinsic interests, highlighted abstract questions in the
mainstream of the theory of multi-ideals which contributed to the
revitalization of the general interest in questions related to ideals of
polynomials and multilinear operators (see \cite{note, botstudia,
indagationes, muro, cds}).

This paper has a twofold purpose: to summarize/organize some information
constructed in the last years concerning the different multilinear
generalizations of absolutely summing operators; and to sketch a research
project directed to the investigation of the existence of multilinear ideals
(related to the ideal of absolutely summing operators) satisfying a list of
properties which we consider natural. We define the notion of maximal and
minimal ideals satisfying some given properties and obtain existence results,
using Zorn%
%TCIMACRO{\U{b4}}%
%BeginExpansion
\'{}%
%EndExpansion
s Lemma. We also discuss qualitative results, posing some question on the
concrete nature of the maximal and minimal ideals.

None of our goals has the intention to be exhaustive: the overview of the
multilinear theory of absolutely summing operators will be concentrated in
special properties and has no encyclopedic character. Besides, our approach to
the existence of multi-ideals satisfying some given properties is, of course,
focused on those selected properties.

\section{Absolutely summing operators: an overview}

A. Dvoretzky and C. A. Rogers \cite{DR}, in 1950, solved a long standing
problem in Banach Space Theory, by showing that in every infinite-dimensional
Banach space there is an unconditionally convergent series which fails to be
absolutely convergent. This result answers Problem 122 of the Scottish Book
\cite{Mau} (the problem was raised by S. Banach in \cite[page 40]{Banach32}).

This result attracted the interest of A. Grothendieck who, in \cite{Gro1955},
presented a different proof of Dvoretzky-Rogers Theorem. Grothendieck's
\textquotedblleft R\'{e}sum\'{e} de la th\'{e}orie m\'{e}trique des produits
tensoriels topologiques\textquotedblright\ together with his thesis may be
regarded, in some sense, as the birthplace of the theory of operators ideals.

The concept of absolutely $p$-summing linear operators is due to A. Pietsch
\cite{stu} and the notion of $(q,p)$-summing operator is due to B. Mitiagin
and A. Pe\l czy\'{n}ski \cite{MPel}. Another cornerstone in the theory is J.
Lindenstrauss and A. Pe\l czy\'{n}ski's paper \cite{lp}, which translated
Grothendieck's ideas to an universal language and showed the intrinsic beauty
of the theory and richness of possible applications.

From now on the space of all continuous linear operators from a Banach space
$E$ to a Banach space $F$ will be denoted by $\mathcal{L}(E,F).$ Let
\[
\ell_{p}^{\text{weak}}(E):=\left\{  (x_{j})_{j=1}^{\infty}\subset E:\left\Vert
(x_{j})_{j=1}^{\infty}\right\Vert _{w,p}:=\sup_{\varphi\in B_{E^{\ast}}%
}\left(
%TCIMACRO{\dsum \limits_{j=1}^{\infty}}%
%BeginExpansion
{\displaystyle\sum\limits_{j=1}^{\infty}}
%EndExpansion
\left\vert \varphi(x_{j})\right\vert ^{p}\right)  ^{1/p}<\infty\right\}
\]
and%
\[
\ell_{p}(E):=\left\{  (x_{j})_{j=1}^{\infty}\subset E:\left\Vert (x_{j}%
)_{j=1}^{\infty}\right\Vert _{p}:=\left(
%TCIMACRO{\dsum \limits_{j=1}^{\infty}}%
%BeginExpansion
{\displaystyle\sum\limits_{j=1}^{\infty}}
%EndExpansion
\left\Vert x_{j}\right\Vert ^{p}\right)  ^{1/p}<\infty\right\}  .
\]

If $1\leq p\leq q<\infty,$ we say that a continuous linear operator
$u:E\rightarrow F$ is $(q,p)$-summing if $\left(  u(x_{j})\right)
_{j=1}^{\infty}\in\ell_{q}(F)$ whenever $(x_{j})_{j=1}^{\infty}\in\ell
_{p}^{\text{weak}}(E)$.

The class of absolutely $(q,p)$-summing linear operators from $E$ to $F$ will
be represented by $\Pi_{q,p}\left(  E,F\right)  $ and $\Pi_{p}\left(
E,F\right)  $ if $p=q$ (in this case $u\in\Pi_{p}\left(  E,F\right)  $ is said
to be absolutely $p$-summing).

An equivalent formulation asserts that $u:E\rightarrow F$ is $(q,p)$-summing
if there is a constant $C\geq0$ such that%
\[
\left(
%TCIMACRO{\dsum \limits_{j=1}^{\infty}}%
%BeginExpansion
{\displaystyle\sum\limits_{j=1}^{\infty}}
%EndExpansion
\left\Vert u(x_{j})\right\Vert ^{q}\right)  ^{1/q}\leq C\left\Vert
(x_{j})_{j=1}^{\infty}\right\Vert _{w,p}%
\]
for all $(x_{j})_{j=1}^{\infty}\in\ell_{p}^{\text{weak}}(E)$. The above
inequality can also be replaced by: there is a constant $C\geq0$ such that%
\[
\left(
%TCIMACRO{\dsum \limits_{j=1}^{m}}%
%BeginExpansion
{\displaystyle\sum\limits_{j=1}^{m}}
%EndExpansion
\left\Vert u(x_{j})\right\Vert ^{q}\right)  ^{1/q}\leq C\left\Vert
(x_{j})_{j=1}^{m}\right\Vert _{w,p}%
\]
for all $x_{1},...,x_{m}\in E$ and all positive integers $m$.

The infimum of all $C$ that satisfy the above inequalities defines a norm,
denoted by $\pi_{q,p}(u)$ (or $\pi_{p}(u)$ if $p=q$), and $\left(  \Pi
_{q,p}\left(  E,F\right)  ,\pi_{q,p}\right)  $ is a Banach space.

From now on, if $1<p<\infty,$ the conjugate of $p$ is denoted by $p^{\ast},$
i.e., $\frac{1}{p}+\frac{1}{p^{\ast}}=1.$

For a full panorama of the linear theory of absolutely summing operators we
refer to the classical book \cite{djt}. Here we restrict ourselves to five
pillars of the theory: Dvoretzky-Rogers Theorem, Grothendieck's Inequality,
Grothendieck (Lindenstrauss-Pe\l czy\'{n}ski) $\ell_{1}$-$\ell_{2}$ Theorem,
Lindenstrauss-Pe\l czy\'{n}ski Theorem (on the converse of Grothendieck
$\ell_{1}$-$\ell_{2}$ Theorem) and Pietsch Domination Theorem.

Dvoretzky-Rogers Theorem can be stated in the context of absolutely summing
operators as follows:

\begin{theorem}
[Dvoretzky-Rogers Theorem, 1950]If$\ p\geq1,$ then $\Pi_{p}(E;E)=\mathcal{L}%
(E;E)$ if and only if $\dim E<\infty.$
\end{theorem}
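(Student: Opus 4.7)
The plan is to treat the two directions separately, with the forward direction elementary and the converse reducing to the classical Dvoretzky-Rogers construction of badly summable sequences in arbitrary infinite-dimensional Banach spaces. For $\dim E < \infty$, I would fix a basis $e_{1},\dots,e_{n}$ of $E$ with biorthogonal coordinate functionals $e_{1}^{*},\dots,e_{n}^{*}\in E^{*}$. The pointwise estimate $\|x\|\leq\sum_{i}\|e_{i}\|\,|e_{i}^{*}(x)|$, combined with $\|e_{i}^{*}\|^{-1}e_{i}^{*}\in B_{E^{*}}$ and a single application of Minkowski's inequality, gives $\bigl(\sum_{j}\|x_{j}\|^{p}\bigr)^{1/p}\leq C(E)\,\|(x_{j})\|_{w,p}$ for every finite family, so $\mathrm{id}_{E}\in\Pi_{p}(E;E)$. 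Since $\Pi_{p}$ is an operator ideal, every $u\in\mathcal{L}(E;E)$ is $p$-summing via the factorization $u=u\circ\mathrm{id}_{E}$.

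For the converse I would argue contrapositively and show that if $\dim E=\infty$ then $\mathrm{id}_{E}\notin\Pi_{p}(E;E)$. The main tool is the Dvoretzky-Rogers Lemma, which furnishes, for each $n\in\mathbb{N}$ and $\varepsilon>0$, unit vectors $x_{1}^{(n)},\dots,x_{n}^{(n)}\in E$ satisfying $(1-\varepsilon)\|(a_{i})\|_{2}\leq\|\sum_{i}a_{i}x_{i}^{(n)}\|\leq(1+\varepsilon)\|(a_{i})\|_{2}$. Dualizing the upper bound yields $\sum_{i}|\varphi(x_{i}^{(n)})|^{2}\leq(1-\varepsilon)^{-2}$ for every $\varphi\in B_{E^{*}}$. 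I would then concatenate these blocks with a scalar weight $c_{n}>0$, forming $(y_{k})_{k}=(c_{n}x_{i}^{(n)})_{n,i}$, and estimate the resulting two $\ell_{p}$-type norms.

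For $p\geq 2$ the bound $|\varphi(x_{i}^{(n)})|\leq 1$ gives $\sum_{i}|\varphi(x_{i}^{(n)})|^{p}\leq\sum_{i}|\varphi(x_{i}^{(n)})|^{2}\leq(1-\varepsilon)^{-2}$, while for $1\leq p<2$ a Hölder step bounds the same quantity by $C\,n^{1-p/2}$. Hence
\[
\|(y_{k})\|_{w,p}^{p}\leq K\sum_{n}n^{\max(0,\,1-p/2)}\,c_{n}^{p},\qquad \sum_{k}\|y_{k}\|^{p}=\sum_{n}n\,c_{n}^{p}.
\]
The choice $c_{n}=n^{-2/p}$ makes the first sum finite in both regimes while the second diverges like the harmonic series, exhibiting a sequence in $\ell_{p}^{\text{weak}}(E)\setminus\ell_{p}(E)$ and certifying $\mathrm{id}_{E}\notin\Pi_{p}(E;E)$. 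The central obstacle is the Dvoretzky-Rogers Lemma itself, whose proof relies on John's ellipsoid theorem and a careful inductive extraction of nearly Euclidean directions in finite-dimensional subspaces; once it is available, the block assembly and $\ell_{p}$-bookkeeping above are routine.
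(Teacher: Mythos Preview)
The paper does not supply a proof of this theorem; it is stated as a classical background result and immediately used to motivate the subsequent discussion. So there is no argument in the paper to compare against, and your proposal must be judged on its own merits.

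Your outline is correct and follows the standard route. Two small remarks. First, when you dualize the upper bound $\|\sum_{i}a_{i}x_{i}^{(n)}\|\leq(1+\varepsilon)\|(a_{i})\|_{2}$ you obtain $\sum_{i}|\varphi(x_{i}^{(n)})|^{2}\leq(1+\varepsilon)^{2}$, not $(1-\varepsilon)^{-2}$; the lower estimate $(1-\varepsilon)\|(a_{i})\|_{2}\leq\|\sum_{i}a_{i}x_{i}^{(n)}\|$ is never used anywhere in your argument, since the divergence of $\sum_{k}\|y_{k}\|^{p}$ only needs $\|x_{i}^{(n)}\|=1$. Second, and related, the two-sided near-isometric embedding of $\ell_{2}^{n}$ that you quote is essentially Dvoretzky's spherical-sections theorem, which is considerably deeper than the Dvoretzky--Rogers Lemma proper. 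The original lemma (via John's ellipsoid, as you note) only gives vectors with $\|x_{i}^{(n)}\|$ bounded below by a universal constant and the one-sided bound $\|\sum_{i}a_{i}x_{i}^{(n)}\|\leq\|(a_{i})\|_{2}$; this is exactly what your computation needs, so you can safely invoke the weaker and more elementary statement. With these cosmetic adjustments the block construction and the choice $c_{n}=n^{-2/p}$ go through as you describe.
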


In view of the above result it is natural to ask for the existence of some $p$
and infinite-dimensional Banach spaces $E$ and $F$ for which $\Pi
_{p}(E;F)=\mathcal{L}(E;F).$ This question will be answered by Theorem
\ref{kyp} and Theorem \ref{uyy} below.

The fundamental tool of the theory is Grothendieck's Inequality (the
formulation below is due to Lindenstrauss and Pe\l czy\'{n}ski \cite{lp}). We
omit the proof, but several different proofs can be easily found in the literature:

\begin{theorem}
[Grothendieck's Inequality (version of Lindenstrauss and Pe\l czy\'{n}ski),
1968]\label{dggg} There is a positive constant $K_{G}$ so that, for all
Hilbert space $H$, all $n\in\mathbb{N}$, every matrix $\left(  a_{ij}\right)
_{n\times n}$ and any $x_{1},...,x_{n},$ $y_{1},...,y_{n}$ in the unit ball of
$H$, the following inequality holds:%
\begin{equation}
\left\vert \sum_{i,j=1}^{n}a_{ij}\langle x_{i},y_{j}\rangle\right\vert \leq
K_{G}\sup\left\{  \left\vert \sum_{i,j=1}^{n}a_{ij}s_{i}t_{j}\right\vert
:\left\vert s_{i}\right\vert ,\left\vert t_{j}\right\vert \leq1\right\}  .
\label{29.5}%
\end{equation}

\end{theorem}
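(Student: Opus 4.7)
The plan is to follow the Krivine-style probabilistic proof, which also yields the sharpest currently known upper bound $K_G\le \pi/(2\ln(1+\sqrt{2}))$. First I would reduce to the case in which $H$ is a finite-dimensional \emph{real} Hilbert space: the complex case follows by splitting into real and imaginary parts at the cost of a factor of $2$, and only the real span of $x_1,\dots,x_n,y_1,\dots,y_n$ is relevant. Writing $A=(a_{ij})$, the task is then to bound $\bigl|\sum_{i,j} a_{ij}\langle x_i,y_j\rangle\bigr|$ in terms of the $\ell_\infty^n\times\ell_\infty^n$ bilinear norm of $A$ appearing on the right-hand side of \eqref{29.5}.

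The probabilistic engine is the classical Gaussian sign identity: for unit vectors $u,v\in H$ and a standard Gaussian vector $g$ on $H$,
\[
\mathbb{E}\bigl[\mathrm{sign}\langle g,u\rangle\,\mathrm{sign}\langle g,v\rangle\bigr]=\tfrac{2}{\pi}\arcsin\langle u,v\rangle.
\]
This is a direct computation: by rotational invariance one conditions on the two-dimensional subspace spanned by $u,v$, where the problem reduces to an elementary planar probability. Applied naively to $u=x_i$, $v=y_j$, this identity would produce a bound involving $\arcsin\langle x_i,y_j\rangle$ rather than the inner product itself; the role of Krivine's trick is to remove this arcsine.

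Krivine's construction produces a larger real Hilbert space $H'$ and unit vectors $u_i,v_j\in H'$ satisfying $\langle u_i,v_j\rangle_{H'}=\sin(\alpha\langle x_i,y_j\rangle)$, with $\alpha=\sinh^{-1}(1)=\ln(1+\sqrt{2})$. One works in the direct-sum Hilbert space $H'=\bigoplus_{n\ge 0} H^{\otimes n}$ and sets $u_i=\bigoplus_n \varepsilon_n\sqrt{|c_n|}\,x_i^{\otimes n}$ and $v_j=\bigoplus_n \sqrt{|c_n|}\,y_j^{\otimes n}$, where $c_n$ are the Taylor coefficients of $\sin(\alpha t)$ and $\varepsilon_n=\mathrm{sign}(c_n)$. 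Using $\langle x_i^{\otimes n},y_j^{\otimes n}\rangle=\langle x_i,y_j\rangle^n$ one verifies the desired sine identity term by term, while $\|u_i\|^2=\|v_j\|^2=\sum_n |c_n|\,\|x_i\|^{2n}\le \sum_n |c_n|=\sinh(\alpha)$; the normalization $\|u_i\|,\|v_j\|\le 1$ therefore forces exactly $\sinh(\alpha)=1$, fixing the value of $\alpha$.

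With these $u_i,v_j$ in hand, applying the Gaussian identity to them and using $\arcsin\sin(\alpha t)=\alpha t$ (valid because $|\alpha\langle x_i,y_j\rangle|\le\alpha<\pi/2$) yields
\[
\tfrac{2\alpha}{\pi}\sum_{i,j}a_{ij}\langle x_i,y_j\rangle=\mathbb{E}\sum_{i,j}a_{ij}\,\mathrm{sign}\langle g,u_i\rangle\,\mathrm{sign}\langle g,v_j\rangle,
\]
whose right-hand side is, for every outcome of $g$, dominated by the sign-bilinear supremum in \eqref{29.5} (since $\mathrm{sign}\langle g,u_i\rangle$ and $\mathrm{sign}\langle g,v_j\rangle$ lie in $[-1,1]$). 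Dividing through by $2\alpha/\pi$ delivers the inequality with $K_G\le \pi/(2\alpha)$. The main obstacle is the Krivine construction of $u_i$ and $v_j$: the Gaussian identity is an elementary planar computation and the final assembly is algebraic, but identifying the correct Hilbert space and coefficients so that the sine identity holds while preserving the unit-ball constraint is the heart of the argument.
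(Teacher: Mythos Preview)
The paper does not prove this theorem: immediately after the statement it says ``We omit the proof, but several different proofs can be easily found in the literature.'' Your Krivine argument is one of those standard proofs and is essentially correct, yielding in addition the best known explicit upper bound $\pi/(2\ln(1+\sqrt{2}))$ in the real case.

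One technical point worth tightening: the Gaussian sign identity $\mathbb{E}[\mathrm{sign}\langle g,u\rangle\,\mathrm{sign}\langle g,v\rangle]=(2/\pi)\arcsin\langle u,v\rangle$ is stated for \emph{unit} vectors; for non-unit $u,v$ the right-hand side becomes $(2/\pi)\arcsin\bigl(\langle u,v\rangle/(\|u\|\,\|v\|)\bigr)$. Since the theorem only places the $x_i,y_j$ in the unit \emph{ball}, your Krivine vectors satisfy $\|u_i\|^2=\sum_n|c_n|\,\|x_i\|^{2n}\le\sinh(\alpha)=1$ with strict inequality possible, and then the final arcsine step no longer collapses cleanly to $\alpha\langle x_i,y_j\rangle$. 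The standard remedy is to reduce at the outset to unit vectors by embedding $H$ into $H\oplus\mathbb{R}\oplus\mathbb{R}$ and replacing each $x_i$ by $(x_i,\sqrt{1-\|x_i\|^2},0)$ and each $y_j$ by $(y_j,0,\sqrt{1-\|y_j\|^2})$; this preserves every inner product $\langle x_i,y_j\rangle$ while making all the vectors unit vectors, after which your argument goes through verbatim.
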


A consequence of Grothendieck's Inequality is that every continuous linear
operator from $\ell_{1}$ to $\ell_{2}$ is absolutely $1$-summing. This result
was stated by Lindenstrauss-Pe\l czy\'{n}ski \cite{lp} and is, in some sense,
contained in Grothendieck%
%TCIMACRO{\U{b4}}%
%BeginExpansion
\'{}%
%EndExpansion
s R\'{e}sum\'{e}. This type of result is what is now referred to as a
\textquotedblleft coincidence theorem\textquotedblright, i.e., a situation
where there are Banach spaces $E$ and $F$ and real numbers $1\leq p,q<\infty$
so that
\[
\Pi_{q,p}(E,F)=\mathcal{L}(E,F).
\]
The same terminology will be used for multilinear mappings.

We sketch here one of the most elementary proofs of Grothendieck
(Lindenstrauss-Pe\l czy\'{n}ski) Theorem; the crucial role played by
Grothendieck Inequality is easily seen.

\begin{theorem}
[Grothendieck's $\ell_{1}$-$\ell_{2}$ Theorem (version of Lindenstrauss,
Pe\l czy\'{n}ski), 1968]\label{kyp}Every continuous linear operator from
$\ell_{1}$ to $\ell_{2}$ is absolutely $1$-summing.
\end{theorem}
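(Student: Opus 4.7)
The plan is to reduce the desired summing inequality to a bilinear form that fits the left-hand side of Grothendieck's Inequality. By the equivalent finite formulation just stated, it suffices to fix $x_1,\dots,x_m\in\ell_1$ and to control $\sum_{j=1}^m\|u(x_j)\|_{\ell_2}$. First, using the self-duality of the Hilbert space $\ell_2$, I would pick for each $j$ a vector $y_j\in B_{\ell_2}$ with $\|u(x_j)\|_{\ell_2}=\langle u(x_j),y_j\rangle$. Second, expanding $x_j=\sum_i x_j(i)\,e_i$ along the canonical basis of $\ell_1$ and setting $z_i:=u(e_i)/\|u\|\in B_{\ell_2}$ (one may assume $\|u\|>0$), a direct manipulation yields
$$\sum_{j=1}^m\|u(x_j)\|_{\ell_2}=\|u\|\sum_{i,j} x_j(i)\,\langle z_i,y_j\rangle.$$

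Now, with $a_{ij}:=x_j(i)$ and with both families $(z_i)$ and $(y_j)$ lying in the unit ball of $\ell_2$, the hypotheses of Theorem~\ref{dggg} are met; applying it gives
$$\sum_{j=1}^m\|u(x_j)\|_{\ell_2}\leq\|u\|\,K_G\sup\Bigl\{\Bigl|\sum_{i,j} x_j(i)\,s_i t_j\Bigr|:|s_i|,|t_j|\leq 1\Bigr\}.$$
The last step is to identify the right-hand supremum with $\|(x_j)_{j=1}^m\|_{w,1}$. Rewriting $\sum_{i,j} x_j(i)\,s_i t_j = \sum_i s_i\bigl(\sum_j t_j x_j\bigr)(i)$ and using the duality $\ell_1^{\ast}=\ell_\infty$, the supremum over $|s_i|\leq 1$ collapses to $\bigl\|\sum_j t_j x_j\bigr\|_{\ell_1}$; then the standard Banach-space identity $\|(x_j)\|_{w,1}=\sup_{|t_j|\leq 1}\bigl\|\sum_j t_j x_j\bigr\|$ closes the estimate with constant $K_G\|u\|$, so that in fact $\pi_1(u)\leq K_G\|u\|$.

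The main obstacle is really only careful bookkeeping. One must normalize by $\|u\|$ so that the vectors fed into Grothendieck's Inequality genuinely sit in the unit ball of the Hilbert space, and one must recognize the combinatorial supremum produced by that inequality as exactly the weak-$\ell_1$ norm of the finite sequence $(x_j)$. Beyond Theorem~\ref{dggg}, the $\ell_1$-$\ell_\infty$ duality, and Hilbert-space self-duality, no further ingredient enters, which matches the author's description of this as one of the most elementary routes to the $\ell_1$-$\ell_2$ Theorem.
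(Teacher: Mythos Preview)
Your approach is essentially the same as the paper's: pick norming vectors $y_j\in B_{\ell_2}$ by Hilbert-space self-duality, expand the $x_j$'s along the canonical basis $(e_i)$ of $\ell_1$, and feed the resulting bilinear sum into Grothendieck's Inequality with the vectors $u(e_i)/\|u\|$ and $y_j$. The identification of the combinatorial supremum with $\|(x_j)_{j=1}^{m}\|_{w,1}$ is correct and matches the paper's computation.

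There is, however, one genuine technical gap. Theorem~\ref{dggg} as stated applies to a \emph{finite} $n\times n$ matrix $(a_{ij})$, but your matrix $a_{ij}=x_j(i)$ has an infinite index $i$ (since $x_j\in\ell_1$ is an infinite sequence). You cannot invoke Theorem~\ref{dggg} directly in that setting; you must either justify the passage to infinite matrices, or truncate. This is precisely the role of the canonical projections $T_n$ in the paper's proof: one first replaces each $x_k$ by $T_n x_k=\sum_{j=1}^{n}a_{jk}e_j$, applies Grothendieck's Inequality to the resulting finite array to obtain $\sum_{k=1}^{m}\|T T_n x_k\|_2\le K_G\|T\|$ uniformly in $n$, and then lets $n\to\infty$ using $T_n x_k\to x_k$. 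Insert this truncation-and-limit step (or an equivalent argument that the right-hand supremum in Theorem~\ref{dggg} is uniformly bounded in $n$ so that one may pass to the limit) and your proof is complete and coincides with the paper's.
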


\begin{proof}
Let $\left(  T_{n}\right)  _{n=1}^{\infty}$ be the sequence of the canonical
projections , i.e.,%
\begin{align*}
T_{n}  &  :\ell_{1}\longrightarrow\ell_{1}\\
x  &  =\sum_{i=1}^{\infty}a_{j}e_{j}\mapsto T_{n}\left(  x\right)  =\sum
_{i=1}^{n}a_{j}e_{j}.
\end{align*}

Let $\left(  x_{k}\right)  _{k=1}^{\infty}\in\ell_{1}^{\text{weak}}\left(
\ell_{1}\right)  $ with%
\[
\left\Vert \left(  x_{k}\right)  _{k=1}^{\infty}\right\Vert _{w,1}%
=\sup_{\varphi\in B_{\ell_{1}^{\ast}}}\sum_{k=1}^{\infty}\left\vert
\varphi\left(  x_{k}\right)  \right\vert \leq1.
\]
One can easily verify that
\[
\left\Vert \left(  T_{n}x_{k}\right)  _{k=1}^{\infty}\right\Vert _{w,1}%
=\sup_{\varphi\in B_{\ell_{1}^{\ast}}}\sum_{k=1}^{\infty}\left\vert
\varphi\left(  T_{n}x_{k}\right)  \right\vert \leq1.
\]

Denoting%
\[
x_{k}=\sum_{j=1}^{\infty}a_{jk}e_{j}\text{ and }T_{n}x_{k}=\sum_{j=1}%
^{n}a_{jk}e_{j},
\]
for each $n,k,$ we can verify that for any positive integers $m\leq n$ and
$\left(  s_{j}\right)  _{j=1}^{n},\left(  t_{k}\right)  _{k=1}^{m}\subset
B_{\mathbb{K}}$, we have%
\[
\left\vert \sum_{j=1}^{n}\sum_{k=1}^{m}a_{jk}s_{j}t_{k}\right\vert \leq1.
\]

Now, let $T\in\mathcal{L}\left(  \ell_{1},\ell_{2}\right)  $ and
$m,n\in\mathbb{N}$, with $n\geq m.$ For each $k,$ $1\leq k\leq m,$ from
Hahn-Banach Theorem and Riesz Representation Theorem there is a $y_{k}\in
\ell_{2},$ with $\left\Vert y_{k}\right\Vert _{2}\leq1,$ so that
\[
\left\Vert TT_{n}x_{k}\right\Vert _{2}=\langle TT_{n}x_{k},y_{k}\rangle.
\]
If $m<n$, we can choose $y_{m+1}=\cdots=y_{n}=0.$ Hence
\[
\sum_{k=1}^{m}\left\Vert TT_{n}x_{k}\right\Vert _{2}=\left\vert \sum_{k=1}%
^{m}\sum_{j=1}^{n}a_{jk}\langle Te_{j},y_{k}\rangle\right\vert .
\]
Now Grothendieck's Inequality comes into play:%
\begin{equation}
\sum_{k=1}^{m}\left\Vert TT_{n}x_{k}\right\Vert _{2}\leq K_{G}\left\Vert
T\right\Vert \sup\left\{  \left\vert \sum_{j=1}^{n}\sum_{k=1}^{n}a_{jk}%
s_{j}t_{k}\right\vert :\left\vert s_{j}\right\vert ,\left\vert t_{k}%
\right\vert \leq1\right\}  \leq K_{G}\left\Vert T\right\Vert \label{mmmnnn2}%
\end{equation}
for all $n,m$, with $n\geq m.$ Since
\[
\lim_{n\rightarrow\infty}T_{n}x_{k}=x_{k},
\]
making $n\rightarrow\infty$ em (\ref{mmmnnn2}), we have%
\[
\sum_{k=1}^{m}\left\Vert Tx_{k}\right\Vert _{2}\leq K_{G}\left\Vert
T\right\Vert
\]
and the proof is done.
\end{proof}

The next result is a kind of reciprocal of the Grothendieck Theorem (for a
proof we refer to \cite{lp}):

\begin{theorem}
[Lindenstrauss, Pe\l czy\'{n}ski, 1968]\label{uyy}If $E$ and $F$ are
infinite-dimensional Banach spaces, $E$ has an unconditional Schauder basis
and $\Pi_{1}(E,F)=\mathcal{L}(E,F)$ then $E=\ell_{1}$ and $F$ is a Hilbert space.
\end{theorem}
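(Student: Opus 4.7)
The plan is to derive both conclusions by contradiction, powered by a single uniform estimate. Applying the closed graph theorem to the bijection $\mathcal{L}(E,F) = \Pi_{1}(E,F)$ yields a constant $C>0$ with $\pi_{1}(T) \leq C\|T\|$ for every $T \in \mathcal{L}(E,F)$. For the first conclusion, let $(e_{n})$ be the normalized unconditional basis of $E$, with unconditional constant $K$. If $(e_{n})$ is \emph{not} equivalent to the unit-vector basis of $\ell_{1}$, then there exist scalars $(a_{n})$ with $\sum_{n}|a_{n}|=\infty$ yet $\sum_{n}a_{n}e_{n}$ convergent (hence unconditionally convergent) in $E$. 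The identity
\[
\sup_{\varphi \in B_{E^{*}}} \sum_{n} |\varphi(a_{n}e_{n})| = \sup_{\epsilon_{n}=\pm 1} \Bigl\| \sum_{n} \epsilon_{n} a_{n} e_{n} \Bigr\|,
\]
combined with unconditionality, places $(a_{n}e_{n}) \in \ell_{1}^{\text{weak}}(E)$. Since $F$ is infinite-dimensional, for each $N$ I would manufacture $T_{N}: E \to F$ with $\|T_{N}\|\leq 1$ and $\|T_{N}e_{n}\|\geq c>0$ for $n\leq N$, by sending the first $N$ basis vectors of $E$ onto a suitably scaled block of a normalized basic sequence in $F$ supplied by the Bessaga--Pe\l czy\'nski selection principle. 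Feeding $T_{N}$ into the uniform estimate forces
\[
c\sum_{n=1}^{N}|a_{n}| \leq \pi_{1}(T_{N})\,\|(a_{n}e_{n})\|_{w,1} \leq C\,\|(a_{n}e_{n})\|_{w,1},
\]
which contradicts $\sum|a_{n}|=\infty$ as $N\to\infty$; hence $E \cong \ell_{1}$.

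With $E = \ell_{1}$ in hand, every $T \in \mathcal{L}(\ell_{1},F)$ satisfies $\pi_{1}(T) \leq C\|T\|$. Given any finite tuple $y_{1},\ldots,y_{n}\in F$, apply this bound to the operator $T: e_{k} \mapsto y_{k}$ (extended by zero beyond), whose operator norm equals $\max_{k}\|y_{k}\|$. Pietsch domination then furnishes a probability measure $\mu$ on $B_{\ell_{\infty}}$ with $\|y_{k}\|\leq \pi_{1}(T) \int |\varphi_{k}|\,d\mu(\varphi)$ for each $k$. Averaging this inequality against Rademacher signs attached to the $y_{k}$'s and applying Khintchine's inequality converts the bound into a cotype $2$ estimate for $F$. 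A symmetric argument routed through the adjoint (or through the complementary role of $F^{*}$) extracts type $2$, after which Kwapie\'n's theorem identifies $F$ up to isomorphism with a Hilbert space.

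The principal obstacle will be Step 2: cotype $2$ of $F$ drops out of Pietsch domination almost transparently, but promoting this to full Hilbert structure requires additional input, classically either the Maurey--Pisier factorization of $1$-summing operators out of $\mathcal{L}_{1}$-spaces or Lindenstrauss and Pe\l czy\'nski's own theory of Grothendieck-type spaces. In Step 1, a lesser but genuine technical snag is the construction of the operators $T_{N}$: the normalized basic sequence chosen in $F$ must have a basis constant cooperating with the unconditional constant $K$ so that $\|T_{N}\|$ stays bounded independently of $N$ while $\|T_{N}e_{n}\|$ is kept uniformly away from zero.
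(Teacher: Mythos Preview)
The paper does not prove this theorem; it only records the statement and refers to the original Lindenstrauss--Pe\l czy\'nski article for a proof. So there is no in-house argument to compare against, and your plan must be judged on its own.

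In Step~1 the difficulty you label a ``technical snag'' is in fact a genuine gap. Sending $e_{n}\mapsto f_{n}$ for $n\le N$, with $(f_{n})$ any normalized basic sequence in $F$, gives $\|T_{N}x\|=\bigl\|\sum_{n\le N}e_{n}^{*}(x)f_{n}\bigr\|$, and there is no a~priori bound on this independent of $N$ unless $(e_{n})$ already dominates $(f_{n})$; for $E=c_{0}$ the norm $\|T_{N}\|$ typically diverges. Rescaling does not help: shrinking $f_{n}$ to tame $\|T_{N}\|$ shrinks $\|T_{N}e_{n}\|$ in the same proportion, and it is precisely the ratio $\sum_{n\le N}|a_{n}|\,\|T_{N}e_{n}\|\big/\|T_{N}\|$ that you need to drive to infinity. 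Routing through Dvoretzky to take $(f_{n})$ an $\ell_{2}^{N}$-basis in $F$ and optimizing over scalings yields only that $(a_{n})\in\ell_{2}$ whenever $\sum a_{n}e_{n}$ converges in $E$ --- enough to rule out $E=c_{0}$, say, but not $E=\ell_{p}$ for $1<p\le 2$. The classical argument instead passes through the characterization of $\mathcal{L}_{1}$-spaces and uses that an $\mathcal{L}_{1}$-space with unconditional basis must be $\ell_{1}$.

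In Step~2 you concede that promoting cotype~$2$ to Hilbert requires outside machinery, which is honest; but the claim that ``cotype~$2$ drops out of Pietsch domination almost transparently'' is also too quick. Applying domination to $T\colon e_{k}\mapsto y_{k}$ gives $\|y_{k}\|\le\pi_{1}(T)\int|\varphi_{k}|\,d\mu$ with $\mu$ supported on $B_{\ell_{\infty}}$, and averaging over Rademacher signs together with Khintchine yields only upper bounds of the form $\mathbb{E}\bigl\|\sum\epsilon_{k}y_{k}\bigr\|\le C\sqrt{n}\,\max_{k}\|y_{k}\|$, not a lower bound $(\sum\|y_{k}\|^{2})^{1/2}\lesssim\mathbb{E}\bigl\|\sum\epsilon_{k}y_{k}\bigr\|$. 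The implication $\Pi_{1}(\ell_{1},F)=\mathcal{L}(\ell_{1},F)\Rightarrow F$ has cotype~$2$ does hold, but it goes through Maurey's extension of Grothendieck's theorem rather than a direct domination-plus-Khintchine computation. The ``symmetric adjoint argument'' for type~$2$ is, as you half-acknowledge, not an argument.
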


Another interesting feature of absolutely summing operators is the Domination-Theorem:

\begin{theorem}
[Pietsch-Domination Theorem, 1967]\label{ppk}If $E$ and $F$ are Banach spaces,
a continuous linear operator $T:E\rightarrow F$ is absolutely $p$-summing if
and only if there is a constant $C>0$ and a Borel probability measure $\mu$ on
the closed unit ball of the dual of $E,$ $\left(  B_{E^{\ast}},\sigma(E^{\ast
},E)\right)  ,$ such that%
\begin{equation}
\left\Vert T(x)\right\Vert \leq C\left(  \int_{B_{E^{\ast}}}\left\vert
\varphi(x)\right\vert ^{p}d\mu\right)  ^{\frac{1}{p}}. \label{gupdt}%
\end{equation}

\end{theorem}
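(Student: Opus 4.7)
The sufficiency is the easy direction: assuming the integral bound, for any $x_1,\dots,x_m\in E$ one computes
\[
\sum_{j=1}^{m}\|T(x_j)\|^{p}\leq C^{p}\sum_{j=1}^{m}\int_{B_{E^{\ast}}}|\varphi(x_j)|^{p}\,d\mu
=C^{p}\int_{B_{E^{\ast}}}\sum_{j=1}^{m}|\varphi(x_j)|^{p}\,d\mu
\leq C^{p}\left\Vert(x_j)_{j=1}^{m}\right\Vert_{w,p}^{p},
\]
since $\mu$ is a probability measure and the weak $p$-norm is realized as a supremum over $B_{E^{\ast}}$. This shows $T\in\Pi_p(E,F)$ with $\pi_p(T)\le C$.

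For the nontrivial direction, my plan is the classical Hahn--Banach separation argument on the compact Hausdorff space $K:=(B_{E^{\ast}},\sigma(E^{\ast},E))$. Inside the real Banach space $C(K)$ I would introduce the set
\[
\mathcal{V}:=\left\{\varphi\mapsto \pi_{p}(T)^{p}\sum_{j=1}^{m}|\varphi(x_j)|^{p}-\sum_{j=1}^{m}\|T(x_j)\|^{p}\ :\ m\in\mathbb{N},\ x_1,\dots,x_m\in E\right\}.
\]
First I would check that $\mathcal{V}$ is a convex cone (stability under addition and nonnegative scalar multiplication is immediate from the definition). Secondly, and this is the crucial use of the hypothesis, I would observe that $\sup_{\varphi\in K} v(\varphi)\ge 0$ for every $v\in\mathcal{V}$: the $p$-summing inequality for $T$ reads exactly $\sum_j\|T(x_j)\|^{p}\le \pi_p(T)^{p}\sup_{\varphi\in K}\sum_j|\varphi(x_j)|^{p}$. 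Hence $\mathcal{V}$ is disjoint from the open convex cone $U:=\{g\in C(K):g(\varphi)<0\ \text{for every}\ \varphi\in K\}$ of strictly negative continuous functions.

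Now I would invoke the Hahn--Banach separation theorem to obtain a nonzero continuous linear functional $L$ on $C(K)$ and a real $\alpha$ with $L(v)\ge \alpha\ge L(u)$ for all $v\in\mathcal{V}$, $u\in U$. Since both $\mathcal{V}$ and $U$ are cones whose closures contain $0$, one forces $\alpha=0$, so $L\ge 0$ on $\mathcal{V}$ and $L\le 0$ on $U$. The latter means $L$ is a positive functional on $C(K)$, hence by the Riesz representation theorem there is a positive Borel measure $\mu_0$ on $K$ with $L(g)=\int_K g\,d\mu_0$; rescaling, I may take $\mu:=\mu_0/\mu_0(K)$ a Borel probability measure (the denominator is strictly positive since $L\ne 0$). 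Applying $L\ge 0$ to the element of $\mathcal{V}$ built from a single vector $x_1=x$ yields
\[
\|T(x)\|^{p}\le \pi_{p}(T)^{p}\int_{B_{E^{\ast}}}|\varphi(x)|^{p}\,d\mu(\varphi),
\]
which is the desired domination with constant $C=\pi_p(T)$.

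The main technical obstacle I expect is the separation step: one must make sure that $\mathcal{V}$ and $U$ really are convex and disjoint in $C(K)$ and that the cone structure allows us to pin the separating constant at $0$. After that, identifying the separating functional with a probability measure is standard via Riesz, and the extraction of the pointwise inequality by testing on singletons in $\mathcal{V}$ is routine.
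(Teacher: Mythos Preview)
Your argument is correct and is the classical Hahn--Banach separation proof (essentially the one in Diestel--Jarchow--Tonge). The paper, however, takes a different route: instead of separating convex sets in $C(B_{E^{\ast}})$, it works on the compact convex set $P(B_{E^{\ast}})$ of probability measures (with the weak-star topology) and applies Ky~Fan's Lemma. Concretely, for each finite family $(x_j)_{j=1}^{m}$ the paper defines an affine continuous function $g\colon P(B_{E^{\ast}})\to\mathbb{R}$ by $g(\rho)=\sum_j\|T(x_j)\|^{p}-C^{p}\int|\varphi(x_j)|^{p}\,d\rho$, observes that the family $\mathcal{F}$ of such $g$'s is concave and that each $g$ attains a nonpositive value at a suitable Dirac measure (this is exactly where the $p$-summing hypothesis enters), and then invokes Ky~Fan to obtain a single $\mu$ with $g(\mu)\le 0$ for all $g\in\mathcal{F}$. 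Your approach trades Ky~Fan for the more elementary Hahn--Banach theorem plus Riesz representation, at the price of an extra normalization step to pass from a positive measure to a probability measure; the paper's approach produces the probability measure directly but relies on a less widely known minimax lemma. Both arguments hinge on the same convexity/compactness structure, and neither is deeper than the other.
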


\begin{proof}
(Sketch) If (\ref{gupdt}) holds it is easy to show that $T$ is absolutely
$p$-summing. For the converse, consider the (compact) set $P(B_{E^{\ast}})$ of
the probability measures in $C(B_{E^{\ast}})^{\ast}$ (endowed with the
weak-star topology). For each $(x_{j})_{j=1}^{m}$ in $E,$ and $m\in
\mathbb{N},$ let $g:P(B_{E^{\ast}})\rightarrow\mathbb{R}$ be defined by%
\[
g\left(  \rho\right)  =\sum_{j=1}^{m}\left[  \left\Vert T(x_{j})\right\Vert
^{p}-C^{p}\int_{B_{E^{\ast}}}\left\vert \varphi(x_{j})\right\vert ^{p}%
d\rho(\varphi)\right]
\]
and $\mathcal{F}$ be the set of all such $g.$ It is not difficult to show that
$\mathcal{F}$ is concave and each $g\in\mathcal{F}$ is continuous and \ convex.

Besides, for each $g\in\mathcal{F}$ there is a measure $\mu_{g}\in
P(B_{E^{\ast}})$ such that $g(\mu_{g})\leq0.$ In fact, from the compactness of
$B_{E^{\ast}}$ and Weierstrass' theorem there is a $\varphi_{0}\in K$ so that%
\[
\sum_{j=1}^{m}\left\vert \varphi_{0}(x_{j})\right\vert ^{p}=\sup_{\varphi\in
B_{E^{\ast}}}\sum_{j=1}^{m}\left\vert \varphi(x_{j})\right\vert ^{p}.
\]
Then, considering the Dirac measure $\mu_{g}=\delta_{\varphi_{0}},$ we deduce
$g(\mu_{g})\leq0.$ So, Ky Fan Lemma (see \cite[page 40]{mono}) ensures that
there exists a $\mu\in P(B_{E^{\ast}})$ so that%
\[
g(\mu)\leq0
\]
for all $g\in\mathcal{F}$ and by choosing an arbitrary $g$ with $m=1$ the
proof is done.
\end{proof}

Using the canonical inclusions from $L_{p}$ spaces we get the following result:

\begin{corollary}
[Inclusion Theorem]If $1\leq r\leq s<\infty,$ then every absolutely
$r$-summing operator is absolutely $s$-summing.
\end{corollary}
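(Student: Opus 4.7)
The plan is to use the Pietsch Domination Theorem (Theorem \ref{ppk}) in both directions, with the canonical inclusion $L_{s}(\mu) \hookrightarrow L_{r}(\mu)$ for probability measures $\mu$ providing the bridge. Since the hint in the statement is precisely ``using the canonical inclusions from $L_{p}$ spaces'', the heart of the matter is that on a probability space the $L^{r}$-norm is dominated by the $L^{s}$-norm whenever $r\le s$ (equivalently, by Jensen's inequality applied to the convex function $t\mapsto t^{s/r}$, or by a direct application of H\"older's inequality with exponent $s/r$).

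First I would assume $T:E\to F$ is absolutely $r$-summing and invoke Theorem \ref{ppk}: there exist a constant $C>0$ and a Borel probability measure $\mu$ on $(B_{E^{\ast}},\sigma(E^{\ast},E))$ such that
\[
\|T(x)\|\leq C\left(\int_{B_{E^{\ast}}}|\varphi(x)|^{r}\,d\mu(\varphi)\right)^{1/r}
\]
for every $x\in E$. Next I would apply the inclusion $L^{s}(\mu)\hookrightarrow L^{r}(\mu)$, which, since $\mu$ is a probability measure and $r\le s$, yields
\[
\left(\int_{B_{E^{\ast}}}|\varphi(x)|^{r}\,d\mu\right)^{1/r}\leq \left(\int_{B_{E^{\ast}}}|\varphi(x)|^{s}\,d\mu\right)^{1/s}.
\]
Combining the two estimates gives
\[
\|T(x)\|\leq C\left(\int_{B_{E^{\ast}}}|\varphi(x)|^{s}\,d\mu\right)^{1/s},
\]
which is precisely the Pietsch domination condition for the exponent $s$. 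Applying the easy (sufficiency) direction of Theorem \ref{ppk} to this inequality, $T$ is absolutely $s$-summing, with $\pi_{s}(T)\leq \pi_{r}(T)$.

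The main ``obstacle'' is really only notational: one must be careful that the same measure $\mu$ works for both exponents (it does, since we never change it), and that the probability normalization of $\mu$ is what legitimizes the monotonicity of $L^{p}$-norms in the exponent. There is no delicate step beyond invoking the domination theorem; this illustrates how the full strength of Theorem \ref{ppk} reduces a nontrivial-looking monotonicity of operator ideals to an elementary inequality between $L^{p}(\mu)$-norms on a probability space.
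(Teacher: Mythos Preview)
Your argument is correct and is exactly the approach the paper indicates: apply the Pietsch Domination Theorem, use the monotonicity $\|f\|_{r}\leq\|f\|_{s}$ for $r\leq s$ on a probability space (i.e., the canonical inclusion $L_{s}(\mu)\hookrightarrow L_{r}(\mu)$), and then read off the $s$-domination inequality. There is nothing to add.
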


The 70's witnessed the emergence of the notion of cotype of a Banach
space, with contributions from J. Hoffmann-J\o rgensen \cite{HJ}, B. Maurey
\cite{Ma2}, S. Kwapie\'{n} \cite{K22}, E. Dubinsky, A. Pe\l czy\'{n}ski and H. P. Rosenthal \cite{DPR} among others; in 1976 the strong connection between the notions of cotype and absolutely summing operators became evident with the work of B. Maurey and G. Pisier
\cite{pisier}. Let us recall the notion of cotype.

The Rademacher functions
\[
r_{n}:\left[  0,1\right]  \longrightarrow\mathbb{R},n\in\mathbb{N}%
\]
are defined as%
\[
r_{n}\left(  t\right)  :=sign\left(  \sin2^{n}\pi t\right)  .
\]

A Banach space $E$ is said to have cotype $q\geq2$ if there is a constant
$K\geq0$ so that, for all positive integer $n$ and all $x_{1},...,x_{n}$ in
$E$, we have%
\begin{equation}
\left(  \underset{i=1}{\overset{n}{%
%TCIMACRO{\dsum }%
%BeginExpansion
{\displaystyle\sum}
%EndExpansion
}}\left\Vert x_{i}\right\Vert ^{q}\right)  ^{1/q}\leq K\left(  \int_{0}%
^{1}\left\Vert \underset{i=1}{\overset{n}{%
%TCIMACRO{\dsum }%
%BeginExpansion
{\displaystyle\sum}
%EndExpansion
}}r_{i}\left(  t\right)  x_{i}\right\Vert ^{2}dt\right)  ^{1/2}\text{.}%
\label{2.3}%
\end{equation}
We denote by $C_{q}\left(  E\right)  $ the infimum of all such $K$ satisfying
$\left(  \ref{2.3}\right)  $ and $\cot E$ denotes the infimum of the cotypes
assumed by $E$, i.e.,%
\[
\cot E=\inf\left\{  2\leq q\leq\infty;E\text{ has cotype }q\right\}  .
\]
It is worth mentioning that $E$ need not to have cotype $\cot E.$

The following combination of results of Maurey, Pisier \cite{pisier} and
Talagrand \cite{T1} are self-explanatory:

\begin{theorem}
[Maurey, Pisier, 1976 + Talagrand, 1992]If a Banach space $E$ has finite
cotype $q$, then $id_{E}$ is absolutely $(q,1)$-summing. The converse is true,
except for $q=2$.
\end{theorem}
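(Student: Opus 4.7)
The plan is to split into two halves: the forward direction (Maurey--Pisier, 1976), which admits a clean three-step sketch, and the converse (Talagrand, 1992), which is much harder and constitutes the real obstacle.

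For the forward direction, assume $E$ has cotype $q$ with constant $C_q(E)$, and fix $x_1,\dots,x_n \in E$. I would chain three inequalities. First, the cotype hypothesis directly yields
$$\left(\sum_{i=1}^n \|x_i\|^q\right)^{1/q} \leq C_q(E)\left(\int_0^1 \left\|\sum_{i=1}^n r_i(t)\, x_i\right\|^2 dt\right)^{1/2}.$$
Second, Kahane's inequality (equivalence of the $L_p$-norms of Rademacher sums, with universal constants) lets me replace the $L_2$-norm on the right by its $L_1$-norm, at the cost of a universal factor $K$. Third, since $r_i(t) \in \{-1,+1\}$ pointwise, the integrand is bounded above by $\max_{\varepsilon_i=\pm 1}\|\sum_{i=1}^n \varepsilon_i x_i\|$, and for any fixed choice of signs Hahn--Banach produces $\varphi \in B_{E^{\ast}}$ with
$$\left\|\sum_{i=1}^n \varepsilon_i x_i\right\| = \sum_{i=1}^n \varepsilon_i \varphi(x_i) \leq \sum_{i=1}^n |\varphi(x_i)| \leq \|(x_j)_{j=1}^n\|_{w,1}.$$
Combining these steps gives exactly the $(q,1)$-summing inequality for $id_E$, with constant at most $K\, C_q(E)$.

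For the converse with $q > 2$, my approach would begin with the Maurey--Pisier theorem, which identifies $\cot E$ with the supremum of those $p$ for which $\ell_p$ is finitely representable in $E$. A direct calculation on the canonical basis of $\ell_p^n$ gives $(\sum_i \|e_i\|^q)^{1/q} = n^{1/q}$ and $\|(e_i)\|_{w,1} = n^{1/p}$, so $\pi_{q,1}(id_{\ell_p^n}) \geq n^{1/q - 1/p} \to \infty$ whenever $p > q$. Hence if $id_E$ is $(q,1)$-summing, no $\ell_p$ with $p>q$ can be finitely representable in $E$, and Maurey--Pisier yields $\cot E \leq q$. The main obstacle, and the precise reason $q=2$ must be excluded, is the gap between the bound $\cot E \leq q$ and the statement that $E$ \emph{has} cotype $q$: the infimum defining $\cot E$ is not always attained, as the paper itself emphasizes just before the theorem. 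Closing this gap when $q>2$ is Talagrand's deep 1992 contribution, which rests on delicate probabilistic estimates going well beyond the elementary toolkit used in the forward direction; I would invoke it as a black box rather than attempt to reproduce it.
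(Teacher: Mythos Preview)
Your proof is correct and follows essentially the same line as the paper's, which also proves only the forward direction and dismisses the converse as ``quite delicate.'' One small simplification: the paper bypasses Kahane's inequality entirely by bounding the $L_2$-average of the Rademacher sum directly by its supremum over $t$ (since $\left(\int_0^1\|\cdot\|^2\,dt\right)^{1/2}\le\sup_t\|\cdot\|$), which gives the sharper constant $C_q(E)$ instead of $K\,C_q(E)$. Your Maurey--Pisier argument for the converse is correct as far as it goes, but note that it is logically subsumed by the black-box invocation of Talagrand, whose theorem is precisely the full implication ``$(q,1)$-summing $\Rightarrow$ cotype $q$'' for $q>2$; the intermediate step $\cot E\le q$ buys no additional leverage once you cite Talagrand anyway.
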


\begin{proof}
(Easy part) If $E$ has cotype $q<\infty,$ then
\begin{align*}
\left(  \underset{i=1}{\overset{n}{%
%TCIMACRO{\dsum }%
%BeginExpansion
{\displaystyle\sum}
%EndExpansion
}}\left\Vert x_{i}\right\Vert ^{q}\right)  ^{1/q}  &  \leq C_{q}(E)\left(
\int_{0}^{1}\left\Vert \underset{i=1}{\overset{n}{%
%TCIMACRO{\dsum }%
%BeginExpansion
{\displaystyle\sum}
%EndExpansion
}}r_{i}\left(  t\right)  x_{i}\right\Vert ^{2}dt\right)  ^{1/2}\\
&  \leq C_{q}(E)\underset{\left\vert t\right\vert \leq1}{\sup}\left\Vert
\sum\limits_{j=1}^{n}r_{j}(t)x_{j}\right\Vert \\
&  \leq C_{q}(E)\left\Vert (x_{j})_{j=1}^{n}\right\Vert _{w,1}.
\end{align*}
The rest of the proof is quite delicate.
\end{proof}

In the 80's the part of the focus of the investigation related to absolutely
summing operators was naturally moved to the nonlinear setting, which will be
treated in the next sections. However the linear theory is still alive and
there are still interesting problems being investigated (see, for example,
\cite{de3, ko4}). For recent results we mention \cite{belg, PellZ, ku, ku2}:

Recent results reinforce the important role played by cotype:

\begin{theorem}
[Botelho, Pellegrino, 2009 ]\label{oik}(\cite{belg, PellZ}) Let $E$ and $F$ be
infinite-dimensional Banach spaces.

(i) If $\Pi_{1}(E,F)=\mathcal{L}(E,F)$ then $\cot E=\cot F=2$.

(ii) If $2\leq r<\cot F$ and $\Pi_{q,r}(E,F)=\mathcal{L}(E,F),$ then
$\mathcal{L}(\ell_{1},\ell_{\cot F})=\Pi_{q,r}(\ell_{1},\ell_{\cot F})$.

(iii) If $\cot F=\infty$ and $p\geq1$, there exists a continuous linear
operator from $E$ to $F$ which fails to be $p$-summing.
\end{theorem}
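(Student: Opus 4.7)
My plan is to derive all three statements from three common ingredients: the Maurey--Pisier theorem (for each $n$, embeddings $J_n:\ell_{\cot F}^n\hookrightarrow F$ with uniform distortion $M$, i.e.\ $\|J_n\|\le M$ and $\|J_n x\|\ge M^{-1}\|x\|$), Dvoretzky's theorem together with Auerbach's lemma (biorthogonal systems $(e_i,f_i)_{i=1}^n\subset E\times E^*$ with $\|e_i\|,\|f_i\|\le 1+\epsilon$ whose span is $(1+\epsilon)$-isomorphic to $\ell_2^n$), and the closed graph theorem, which upgrades each hypothesized coincidence $\Pi_{q,p}(E,F)=\mathcal{L}(E,F)$ to the quantitative bound $\pi_{q,p}(T)\le C\|T\|$. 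The idea is to exhibit sequences $T_n\in\mathcal{L}(E,F)$ with $\|T_n\|$ uniformly bounded but $\pi_{q,p}(T_n)\to\infty$, contradicting the hypothesis and forcing the stated conclusions.

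I would begin with \textbf{part (iii)}, the cleanest case. With $\cot F=\infty$, Maurey--Pisier supplies $J_n:\ell_\infty^n\hookrightarrow F$ as above. I set $u_n(x):=(f_i(x))_{i=1}^n\in\ell_\infty^n$; because the codomain is $\ell_\infty^n$, $\|u_n\|\le 1+\epsilon$ \emph{independently of $n$}, and $u_n(e_j)=e_j$. The composition $T_n:=J_n u_n$ satisfies $\|T_n\|\le(1+\epsilon)M$, while evaluating the $p$-summing norm on the sequence $(e_i)_{i=1}^n$ gives
\[
\pi_p(T_n)\;\ge\;\frac{\bigl(\sum_{i=1}^n\|T_n e_i\|^p\bigr)^{1/p}}{\|(e_i)_{i=1}^n\|_{w,p}^E}\;\ge\;\frac{M^{-1}n^{1/p}}{\max(1,n^{1/p-1/2})}\;\longrightarrow\;\infty,
\]
which contradicts $\pi_p(T_n)\le C\|T_n\|$ if $\Pi_p(E,F)=\mathcal{L}(E,F)$ held, producing an operator in $\mathcal{L}(E,F)\setminus\Pi_p(E,F)$.

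For \textbf{parts (i) and (ii)} the same template applies but with $\ell_\infty^n$ replaced by $\ell_{\cot F}^n$; the naive choice of $u_n:E\to\ell_s^n$ (with $s:=\cot F<\infty$) has norm of order $\sqrt{n}$, which exactly cancels the growth in the numerator, so a more delicate argument is required. My plan for (ii) is three-step: (a) post-composing with $J_n$ transfers the coincidence from $(E,F)$ to $(E,\ell_s^n)$, giving $\pi_{q,r}(T)\le CM^2\|T\|$ for every $T:E\to\ell_s^n$, uniformly in $n$; (b) using bounded factorizations of a given $S:\ell_1\to\ell_s^n$ out of $E$ together with the ideal property of $\Pi_{q,r}$, transfer the estimate further to the pair $(\ell_1,\ell_s^n)$; (c) pass to the limit $n\to\infty$ in the codomain to obtain $\Pi_{q,r}(\ell_1,\ell_s)=\mathcal{L}(\ell_1,\ell_s)$. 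The constraint $r\ge 2$ enters at precisely the point where the weak-$\ell_r$ norm of the $\ell_2^n$-basis of $E$ must remain bounded: it equals $1$ for $r\ge 2$ and $n^{1/r-1/2}$ for $r<2$. For \textbf{(i)}, the conclusion $\cot F=2$ follows from (ii) applied with $r=2$ (after invoking the inclusion $\pi_2\le\pi_1$) combined with the classical failure of $\mathcal{L}(\ell_1,\ell_s)=\Pi_2(\ell_1,\ell_s)$ for $s>2$; the conclusion $\cot E=2$ comes from a dual argument transferring the coincidence to $(F^*,E^*)$ via adjoint-ideal duality.

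The \textbf{main obstacle} is step (b) in part (ii): constructing, for each finite-rank $S:\ell_1\to\ell_s^n$, a factorization through $(E,F)$ whose norms are controlled by $\|S\|$ without a $\sqrt{n}$-loss. The hypothesis $r\ge 2$ is what saves the estimate by keeping the relevant weak-$\ell_r$ norms in $E$ bounded, and the careful bookkeeping of these constants is the technical heart of the proof.
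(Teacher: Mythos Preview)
The paper does not prove this theorem; it is simply quoted from \cite{belg, PellZ}. There is therefore no in-paper proof to compare against, so I assess your plan on its own terms.

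Part~(iii) is correct and well executed. The point that $u_n:x\mapsto(f_i(x))_{i=1}^n$ has norm at most $1+\epsilon$ \emph{precisely because the target carries the sup norm} is exactly right, and the displayed lower bound for $\pi_p(T_n)$ is valid.

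Part~(ii), however, has a genuine gap at step~(b). You promise ``bounded factorizations of a given $S:\ell_1\to\ell_s^n$ out of $E$'' but never construct them, and the natural attempt collapses: routing through the Dvoretzky copy $\ell_2^N\hookrightarrow E$ forces the map $\tilde S:\ell_2^N\to\ell_s^n$ agreeing with $S$ to satisfy only $\|\tilde S\|\lesssim\sqrt N\,\|S\|$ (since $\|y\|_1\le\sqrt N\,\|y\|_2$), and there is no uniformly bounded projection $E\to\ell_2^N$ to control the extension to all of $E$. Your observation that the weak-$r$ norm of the $\ell_2^N$-basis is bounded when $r\ge 2$ is correct, but it only controls the \emph{test sequences} you feed into the operator; the $\sqrt N$ loss reappears on the operator side and wipes out the gain. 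As written, step~(b) is a missing idea, not merely ``careful bookkeeping.'' The arguments in the cited references do not attempt to factor all of $S$ through $E$ at once; they exploit the maximality (locality) of $\Pi_{q,r}$ as an operator ideal to make a genuinely finite-dimensional transfer.

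Part~(i) inherits the gap above and introduces a second one. The ``dual argument transferring the coincidence to $(F^*,E^*)$'' does not work for $\Pi_1$: the dual ideal of $\Pi_1$ is the class of integral operators, not $\Pi_1$ itself, so $\Pi_1(E,F)=\mathcal L(E,F)$ does not give $\Pi_1(F^*,E^*)=\mathcal L(F^*,E^*)$. Deducing $\cot E=2$ requires a direct domain-side argument---for instance, first pass to the target $\ell_2$ via Dvoretzky in $F$ (this codomain transfer is legitimate; it is exactly your step~(a)) to obtain $\Pi_1(E,\ell_2)=\mathcal L(E,\ell_2)$, and then argue from that coincidence---rather than a duality trick.
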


In a completely different direction, recent papers have investigated linear
absolutely summing operators in the context of the theory of
lineability/spaceability (see \cite{bd, seo, timoney}). For example, in
\cite{timoney} the following result (which can be interpreted as a
generalization of results from \cite{davis}) is shown:

\begin{theorem}
[Kitson, Timoney, 2010]Let $\mathcal{K}(E,F)$ denote the space of compact
linear operators from $E$ to $F$. If $E$ and $F$ are infinite-dimensional
Banach spaces and $E$ is super-reflexive, then
\[
A=\mathcal{K}(E,F)\smallsetminus%
%TCIMACRO{\dbigcup \limits_{1\leq p<\infty}}%
%BeginExpansion
{\displaystyle\bigcup\limits_{1\leq p<\infty}}
%EndExpansion
\Pi_{p}(E,F)
\]
is spaceable (i.e., $A\cup\{0\}$ contains a closed infinite-dimensional vector space).
\end{theorem}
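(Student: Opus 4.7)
The plan is to deduce the statement from the general abstract spaceability criterion that Kitson and Timoney establish as the main technical result of their paper: \emph{if $X$ is a Banach space and $(X_n)_{n\in\mathbb{N}}$ is a sequence of Banach spaces with each $X_n$ continuously embedded in $X$ and with $X_n\neq X$ for every $n$, then $X\setminus\bigcup_n X_n$ is spaceable.} I would apply this with $X=\mathcal{K}(E,F)$ under the operator norm and $X_n=\Pi_n(E,F)\cap\mathcal{K}(E,F)$, equipped with the norm $T\mapsto\pi_n(T)$. Since the Inclusion Theorem gives $\bigcup_{1\leq p<\infty}\Pi_p(E,F)=\bigcup_{n\in\mathbb{N}}\Pi_n(E,F)$, the set $A$ in the statement coincides with $\mathcal{K}(E,F)\setminus\bigcup_n X_n$, so the problem reduces to verifying the hypotheses of the criterion.

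The routine checks are straightforward. Each $X_n$ is a Banach space under $\pi_n$: if $(T_k)$ is $\pi_n$-Cauchy in $X_n$, then it is also Cauchy in operator norm, and its operator-norm limit is compact (since $\mathcal{K}(E,F)$ is closed in $\mathcal{L}(E,F)$) and agrees with the $\pi_n$-limit in the Banach space $(\Pi_n(E,F),\pi_n)$. Continuity of the inclusion $X_n\hookrightarrow\mathcal{K}(E,F)$ is immediate from $\pi_n\geq\|\cdot\|$. The real content of the proof is the non-triviality condition: for every $n$, there must exist a compact operator $E\to F$ that is not absolutely $n$-summing.

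This last step is the main obstacle, and it is precisely where the super-reflexivity hypothesis on $E$ is essential. The strategy is to combine Dvoretzky's theorem (which furnishes, for every $k$, subspaces of $F$ that are $(1+\varepsilon)$-isomorphic to $\ell_2^k$) with a normalized weakly null basic sequence $(e_k)$ in $E$ (available because super-reflexive spaces are reflexive, so that $B_E$ is weakly compact and such extraction is possible). One then builds the desired compact operator as a suitably damped diagonal-type map sending a scaled version of $(e_k)$ into a Hilbertian block of $F$; the damping is tuned so that compactness is preserved while a Dvoretzky-Rogers-style lower estimate defeats the $n$-summing inequality. Super-reflexivity enters twice in this construction: it provides the weak-compactness needed to extract $(e_k)$, and it provides an equivalent uniformly convex renorming of $E$ with a quantitative modulus, which is what controls the construction and prevents the operator from accidentally satisfying the $n$-summing inequality. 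Once this is in place, the Kitson--Timoney abstract theorem delivers the spaceability of $A$ and completes the proof.
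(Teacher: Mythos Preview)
The paper you are working from does \emph{not} prove this theorem: it merely quotes the result and attributes it to the Kitson--Timoney preprint \cite{timoney}, so there is no proof in the paper to compare your proposal against. Your overall framework---reduce to the abstract Kitson--Timoney spaceability criterion for countable unions of proper operator ranges, take $X=\mathcal{K}(E,F)$ and $X_n=\Pi_n(E,F)\cap\mathcal{K}(E,F)$, and then verify $X_n\neq X$ for every $n$---is exactly the strategy of the original Kitson--Timoney argument, and the routine checks you list (completeness of $X_n$ under $\pi_n$, continuity of the inclusion) are fine.

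Where your sketch becomes shaky is in the ``main obstacle'' paragraph. Your account of how super-reflexivity is used is not accurate: extracting a normalized weakly null basic sequence requires only reflexivity (indeed only non-Schur), so that cannot be where super-reflexivity earns its keep, and the vague appeal to ``a quantitative modulus'' of uniform convexity does not by itself explain why the diagonal operator you build fails the $n$-summing inequality. The genuine input from super-reflexivity is through the Enflo--Pisier theory: a super-reflexive $E$ has nontrivial type, which gives uniform quantitative control on how $\ell_2^k$ sits inside finite-dimensional quotients (or subspaces) of $E$; combined with Dvoretzky in $F$, this lets one assemble a block-diagonal compact operator whose restriction to each block witnesses a Dvoretzky--Rogers lower bound, forcing $\pi_n=\infty$. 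As written, your paragraph gestures at the right ingredients but does not actually carry out (or cite) the estimate that makes the construction work, so the proposal has a gap at precisely the nontrivial step. If you intend this as a complete proof you should either reproduce the relevant quantitative construction or cite the result (e.g., the Davis--Johnson-type construction the survey alludes to) that supplies it.
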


\section{Operator ideals and multi-ideals: generating multi-ideals}

The theory of operator ideals is due to Pietsch and goes back to his monograph
\cite{mono} in 1978. An operator ideal $\mathcal{I}$ is a subclass of the
class $\mathcal{L}$ of all continuous linear operators between Banach spaces
such that for all Banach spaces $E$ and $F$ its components%
\[
\mathcal{I}(E;F):=\mathcal{L}(E;F)\cap\mathcal{I}%
\]
satisfy:

(1) $\mathcal{I}(E;F)$ is a linear subspace of $\mathcal{L}(E;F)$ which
contains the finite rank operators.

(2) (Ideal property) If $u\in\mathcal{I}(E;F)$, $v\in\mathcal{L}(G;E)$ for
$j=1,\ldots,n$ and $t\in\mathcal{L}(F;H)$, then $t\circ u\circ v\in
\mathcal{I}(G;H)$.

The structure of operator ideals is shared by the most important classes of
operators that appear in Functional Analysis, such as compact, weakly compact,
nuclear, approximable, absolutely summing, strictly singular operators, among
many others.

The multilinear theory of operator ideals was also sketched by Pietsch in
\cite{PPPP}.

From now on $\mathbb{K}$ represents the field of all scalars (complex or
real), and $\mathbb{N}$ denotes the set of all positive integers. For
$n\geq1,$ the Banach space of all continuous $n$-linear mappings from
$E_{1}\times\cdots\times E_{n}$ into $F$ endowed with the $\sup$ norm is
denoted by $\mathcal{L}(E_{1},...,E_{n};F).$

An ideal of multilinear mappings (or multi-ideal) $\mathcal{M}$ is a subclass
of the class of all continuous multilinear operators between Banach spaces
such that for a positive integer $n$, Banach spaces $E_{1},\ldots,E_{n}$ and
$F$, the components
\[
\mathcal{M}(E_{1},\ldots,E_{n};F):=\mathcal{L}(E_{1},\ldots,E_{n}%
;F)\cap\mathcal{M}%
\]
satisfy:

\bigskip

(i) $\mathcal{M}(E_{1},\ldots,E_{n};F)$ is a linear subspace of $\mathcal{L}%
(E_{1},\ldots,E_{n};F)$ which contains the $n$-linear mappings of finite type.

(ii) The ideal property: if $A\in\mathcal{M}(E_{1},\ldots,E_{n};F)$, $u_{j}%
\in\mathcal{L}(G_{j};E_{j})$ for $j=1,\ldots,n$ and $t\in\mathcal{L}(F;H)$,
then $t\circ A\circ(u_{1},\ldots,u_{n})$ belongs to $\mathcal{M}(G_{1}%
,\ldots,G_{n};H)$.

Moreover, there is a function $\Vert\cdot\Vert_{\mathcal{M}}\colon
\mathcal{M}\longrightarrow\lbrack0,\infty)$ satisfying

\bigskip

(i') $\Vert\cdot\Vert_{\mathcal{M}}$ restricted to $\mathcal{M}(E_{1}%
,\ldots,E_{n};F)$ is a norm, for all Banach spaces $E_{1},\ldots,E_{n}$ and
$F$, which makes $\mathcal{M}(E_{1},\ldots,E_{n};F)$ a Banach space.

(ii') $\Vert A\colon\mathbb{K}^{n}\longrightarrow\mathbb{K}:A(\lambda
_{1},\ldots,\lambda_{n})=\lambda_{1}\cdots\lambda_{n}\Vert_{\mathcal{M}}=1$
for all $n$,

(iii') If $A\in\mathcal{M}(E_{1},\ldots,E_{n};F)$, $u_{j}\in\mathcal{L}%
(G_{j};E_{j})$ for $j=1,\ldots,n$ and $v\in\mathcal{L}(F;H)$, then $\Vert
v\circ A\circ(u_{1},\ldots,u_{n})\Vert_{\mathcal{M}}\leq\Vert v\Vert\Vert
A\Vert_{\mathcal{M}}\Vert u_{1}\Vert\cdots\Vert u_{n}\Vert$.\bigskip

However, the construction of adequate multilinear and polynomial extensions of
a given operator ideal needs some care. The first is that, given positive
integers $n_{1}$ and $n_{2}$, the respective levels of $n_{1}$-linearity and
$n_{2}$-linearity need to have some inter-connection and obviously a strong
relation with the original level $(n=1)$. This pertinent preoccupation has
appeared in different recent papers, with the notions of ideals closed for
scalar multiplication, closed for differentiation and the notions of coherent
and compatible multilinear ideals (see \cite{botstudia, indagationes, muro}).

The following properties illustrate the essence of the aforementioned
inter-connection between the levels of the multi-ideal (these concepts are
natural adaptations from the analogous for polynomials defined in
\cite{indagationes}).

\begin{definition}
[cud multi-ideal]An ideal of multilinear mappings $\mathcal{M}$ is closed
under differentiation (cud)\textbf{ }if, for all $n$, $E_{1},...,E_{n},F$ and
$T\in\mathcal{M}(E_{1},...,E_{n};F)$, every linear operator obtained by fixing
$n-1$ vectors $a_{1},...,a_{j-1},a_{j+1},...,a_{n}$ belongs to $\mathcal{M}%
(E_{j};F)$ for all $j=1,...,n.$
\end{definition}

\begin{definition}
[csm multi-ideal]An ideal of multilinear mappings $\mathcal{M}$ is closed for
scalar multiplication (csm) if for all $n$, $E_{1},...,E_{n},E_{n+1},F,$
$T\in\mathcal{M}(E_{1},...,E_{n};F)$ and $\varphi\in E_{n+1}^{\ast}$, the map
$\varphi T$ belongs to $\mathcal{M}(E_{1},...,E_{n},E_{n+1};F)$.
\end{definition}

For the theory of polynomials and multilinear mappings between Banach spaces
we refer to \cite{Di, Mu}.

\section{Multiple summing multilinear operators: the prized idea}

Few know that the concept of multiple $p$-summing mappings was introduced in a
research report of M.C. Matos in 1992 \cite{rr}, under the terminology of
\textquotedblleft strictly absolutely summing multilinear
mappings\textquotedblright. The motivation of Matos was a question of Pietsch
on the eventual coincidence of the Hilbert-Schmidt $n$-linear functionals and
the space of absolutely $(s;r_{1},...,r_{n})$-summing $n$-linear functionals
for some values of $s$ and $r_{k},k=1,...,n.$ In this research report, the
first properties of this class are introduced, as well as the connections with
Hilbert-Schmidt multilinear operators and a solution to Pietsch's question in
the context of strictly absolutely summing multilinear mappings.

However, this research report was not published and only in 2003 Matos
\cite{collec} published an improved version of this preprint, now using the
terminology of \textit{fully summing multilinear mappings.} At the same time,
and independently, Bombal, P\'{e}rez-Garc\'{\i}a and Villanueva \cite{bombal,
jmaa} introduced the same concept, under the terminology of multiple summing
multilinear operators.

Since then this class has gained special attention, being considered by
several authors as the most important multilinear generalization of the ideal
of absolutely summing operators. For this reason we will dedicate more
attention to the description of this class.

A fair description of the subject should begin in 1930, when Littlewood
\cite{LLL} (see \cite{bla} for a recent approach) proved his Littlewood's
$4/3$ inequality asserting that
\[
\left(  \sum\limits_{i,j=1}^{N}\left\vert U(e_{i},e_{j})\right\vert ^{\frac
{4}{3}}\right)  ^{\frac{3}{4}}\leq\sqrt{2}\left\Vert U\right\Vert
\]
for every bilinear form $U:\ell_{\infty}^{N}\times\ell_{\infty}^{N}%
\rightarrow\mathbb{C}$ and every positive integer $N.$ One year later
Bohnenblust and Hille \cite{BH} (see also \cite{sevilla, Def2}) improved this
result to multilinear forms by showing that for every positive integer $n$
there is a $C_{n}>0$ so that
\begin{equation}
\left(  \sum\limits_{i_{1},...,i_{n}=1}^{N}\left\vert U(e_{i_{^{1}}%
},...,e_{i_{n}})\right\vert ^{\frac{2n}{n+1}}\right)  ^{\frac{n+1}{2n}}\leq
C_{n}\left\Vert U\right\Vert \label{ju}%
\end{equation}
for every $n$-linear mapping $U:\ell_{\infty}^{N}\times\cdots\times
\ell_{\infty}^{N}\rightarrow\mathbb{C}$ and every positive integer $N$.

Using that $\mathcal{L}\left(  c_{0};E\right)  $ is isometrically isomorphic
to $\ell_{1}^{w}\left(  E\right)  $ (see \cite{djt}), Bohnenblust-Hille
inequality can be re-written as (details can be found in \cite{pgtese}):

\begin{theorem}
[Bohnenblust-Hille, re-written (P\'{e}rez-Garc\'{\i}a, 2003)]\label{ytr}If
$1\leq p<\infty$, and $n$ is a positive integer and $E_{1},...,E_{n}$ are
Banach spaces and $U\in\mathcal{L}(E_{1},\ldots,E_{n};\mathbb{K}),$ then there
exists a constant $C_{n}\geq0$ such that%
\begin{equation}
\left(  \sum_{j_{1},\ldots,j_{n}=1}^{N}\left\vert U(x_{j_{1}}^{(1)}%
,\ldots,x_{j_{n}}^{(n)})\right\vert ^{\frac{2n}{n+1}}\right)  ^{\frac{n+1}%
{2n}}\leq C_{n}\prod_{k=1}^{n}\left\Vert (x_{j}^{(k)})_{j=1}^{N}\right\Vert
_{w,1} \label{juo}%
\end{equation}
for every positive integer $N$ and $x_{j}^{(k)}\in E_{k}$, $k=1,...,n$ and
$j=1,...,N.$
\end{theorem}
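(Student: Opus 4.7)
The plan is to reduce the statement to the classical $\ell_\infty^N$-version of Bohnenblust-Hille, namely inequality (\ref{ju}), by exploiting the isometric identification $\mathcal{L}(c_0;E) \cong \ell_1^w(E)$ already recalled just before the theorem. The crucial point is that a weakly $\ell_1$-summable finite sequence in a Banach space $E_k$ is precisely the image of the canonical basis of $\ell_\infty^N$ under a bounded linear operator whose operator norm coincides with the weak-$\ell_1$ norm of the sequence.

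Concretely, for each $k=1,\ldots,n$, I would define $T_k \colon \ell_\infty^N \to E_k$ by $T_k(e_j) := x_j^{(k)}$ (extended linearly) and verify, via a standard Hahn-Banach duality computation,
$$
\|T_k\| \;=\; \sup_{\|\lambda\|_\infty \le 1}\;\sup_{\varphi \in B_{E_k^{\ast}}} \left| \sum_{j=1}^{N} \lambda_j\,\varphi(x_j^{(k)}) \right| \;=\; \sup_{\varphi \in B_{E_k^{\ast}}} \sum_{j=1}^{N} |\varphi(x_j^{(k)})| \;=\; \|(x_j^{(k)})_{j=1}^{N}\|_{w,1}.
$$
I would then form the scalar $n$-linear map
$$
V \colon \ell_\infty^N \times \cdots \times \ell_\infty^N \to \mathbb{K}, \qquad V(y^{(1)}, \ldots, y^{(n)}) := U(T_1 y^{(1)}, \ldots, T_n y^{(n)}),
$$
so that by multilinearity and the ideal estimate $\|V\| \le \|U\|\prod_{k=1}^{n}\|T_k\|$, while evaluating on canonical basis vectors gives $V(e_{j_1},\ldots,e_{j_n}) = U(x_{j_1}^{(1)}, \ldots, x_{j_n}^{(n)})$. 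Applying the classical inequality (\ref{ju}) to $V$ and substituting these two identities on both sides yields the desired estimate, with the constant $C_n$ inherited directly from Bohnenblust-Hille (absorbing the factor $\|U\|$ into $C_n$ if one prefers the cleaner right-hand side as stated).

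The main obstacle is essentially cosmetic: all the depth — the sharp exponent $\tfrac{2n}{n+1}$ and the existence of the finite constant $C_n$ — is imported from the classical Bohnenblust-Hille inequality (\ref{ju}). The only genuine verification is the norm identity $\|T_k\| = \|(x_j^{(k)})_{j=1}^{N}\|_{w,1}$, which is the finite-$N$ incarnation of $\mathcal{L}(c_0;E) \cong \ell_1^w(E)$. I would also remark that the parameter $p$ appearing in the hypothesis plays no role whatsoever in the conclusion and appears to be a vestigial relic from a more general formulation; it may safely be ignored in the argument above.
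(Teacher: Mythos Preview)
Your proposal is correct and follows exactly the route the paper indicates: the paper does not give a detailed proof but simply says that, using the isometric identification $\mathcal{L}(c_0;E)\cong\ell_1^w(E)$, the classical Bohnenblust--Hille inequality (\ref{ju}) can be rewritten in this form, referring to \cite{pgtese} for details. Your construction of $T_k:\ell_\infty^N\to E_k$ with $\|T_k\|=\|(x_j^{(k)})_{j=1}^N\|_{w,1}$ and the composition $V=U\circ(T_1,\ldots,T_n)$ is precisely the standard way to make that identification explicit, and your observation that the parameter $p$ is vestigial is also correct.
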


In this sense Bohnenblust-Hille theorem can be interpreted as the beginning of
the notion of multiple summing operators:

If $1\leq p_{1},...,p_{n}\leq q<\infty,$ $T:E_{1}\times\cdots\times
E_{n}\rightarrow F$ is multiple\emph{ }$(q;p_{1},...,p_{n})$-summing
($T\in\mathcal{L}_{m,(q,p_{1},...,p_{n})}(E_{1},...,E_{n};F)$) if there exists
$C_{n}>0$ such that
\begin{equation}
\left(  \sum_{j_{1},...,j_{n}=1}^{\infty}\Vert T(x_{j_{1}}^{(1)},...,x_{j_{n}%
}^{(n)})\Vert^{q}\right)  ^{1/q}\leq C_{n}\prod\limits_{k=1}^{n}\Vert
(x_{j}^{(k)})_{j=1}^{\infty}\Vert_{w,p_{k}}\text{ } \label{jup2}%
\end{equation}
for every $(x_{j}^{(k)})_{j=1}^{\infty}\in\ell_{p_{k}}^{w}(E_{k})$,
$k=1,...,n$.

When $p_{1}=...=p_{n}=p$ we write $\mathcal{L}_{m,(q;p)}$ instead of
$\mathcal{L}_{m,(q;p_{1},...,p_{n})};$ when $p_{1}=...=p_{n}=p=q$ we write
$\mathcal{L}_{m,p}$ instead of $\mathcal{L}_{m,(q;p_{1},...,p_{n})}.$ The
infimum of the constants $C_{n}$ satisfying (\ref{jup2}) defines a norm in
$\mathcal{L}_{m,(q,p)}$ and is denoted by $\pi_{q;p_{1},...,p_{k}}$ (or
$\pi_{q;p}$ if $p_{1}=\cdots=p_{k}=p$ or even $\pi_{p}$ when $p_{1}%
=\cdots=p_{k}=p=q$). It is worth mentioning that the essence of the notion of
multiple summing multilinear operators, for bilinear operators, also appears
in the paper of Ramanujan and Schock \cite{Ram}.

It is well-known that the power $\frac{2n}{n+1}$ in Bonenblust-Hille Theorem
\ref{ytr} is optimal. The constant $C_{n}$ from (\ref{juo}) is the same
constant from (\ref{ju}). The optimal values are not known. For recent
estimates for $C_{n}$ we refer to \cite{psseo}. For example, in the real case,
for $2\leq n\leq14,$ in \cite{psseo} it is shown that $C_{n}\leq$
$2^{\frac{n^{2}+6n-8}{8n}}$ if $n$ is even and by $C_{n}\leq2^{\frac
{n^{2}+6n-7}{8n}}$ if $n$ is odd (these estimates are derived from
\cite{Def2}). In the complex case, H. Qu\'{e}ffelec \cite{Que}, A. Defant and
P. Sevilla-Peris \cite{sevilla} have proved that $C_{n}\leq\left(  \frac
{2}{\sqrt{\pi}}\right)  ^{n-1}$ but for $n\geq8$ better estimates can be also
found in \cite{psseo} (also derived from \cite{Def2}).

So, since the power $\frac{2n}{n+1}$ is sharp, one might not expect that the
class of multiple summing operators shall lift the trivial coincidence
situations from the linear case, i.e.,%
\[
\Pi_{p}(E;\mathbb{K})=\mathcal{L}(E,\mathbb{K})
\]
for every Banach spaces $E$ but, in general,%
\[
\mathcal{L}_{m,p}(^{n}E;\mathbb{K})\neq\mathcal{L}(^{n}E,\mathbb{K}).
\]

The multi-ideal of multiple summing multilinear operators is, by far, the most
investigated class related to the multilinear theory of absolutely summing
operators (see \cite{botp, andreas david, sevilla, davidstudia} and references
therein). The reason for the success of this generalization of absolutely
summing operators is perhaps the nice combination of nontrivial good
properties, as coincidence theorems similar to those from the linear theory
(\cite{bombal, botpams, botp}), and challenging problems as the inclusion
theorem which holds in very special situations.

The main results below are presented with the respective dates. In the case of
results that appeared in a thesis or dissertation and were published after, we
have chosen the date of the thesis/dissertation.

A first remark on the class of multiple summing multilinear operators is that
it is easy to show that coincidence results for multiple summing multilinear
operators always imply in the respective linear ones (details can be found in
\cite{spp}):

\begin{proposition}
If $\mathcal{L}(E_{1},\ldots,E_{n};F)=\mathcal{L}_{m,(q;p_{1},\ldots,p_{n}%
)}(E_{1},\ldots,E_{n};F)$, then
\[
\mathcal{L}(E_{j};F)=\Pi_{q,p_{j}}(E_{j};F),j=1,\ldots,n.
\]

\end{proposition}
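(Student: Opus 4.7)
The plan is to reduce the $(q,p_j)$-summability of an arbitrary linear operator $u\in\mathcal{L}(E_j;F)$ to the multiple $(q;p_1,\ldots,p_n)$-summability of a suitably constructed $n$-linear mapping $T$. The hypothesis forces every continuous $n$-linear map to be multiple summing, so the entire job reduces to choosing $T$ and test sequences that make the multi-index sum in \eqref{jup2} collapse to a single sum involving only $u$.

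\textbf{Step 1 (Construction of $T$).} Fix $j\in\{1,\ldots,n\}$. Assuming every $E_i$ is nontrivial, for each $i\neq j$ use Hahn--Banach to pick $\varphi_i\in E_i^\ast$ and $y_i\in E_i$ with $\varphi_i(y_i)=1$. Given $u\in\mathcal{L}(E_j;F)$, define
\[
T(x_1,\ldots,x_n):=\Bigl(\prod_{i\neq j}\varphi_i(x_i)\Bigr)\,u(x_j).
\]
Then $T\in\mathcal{L}(E_1,\ldots,E_n;F)$, and by hypothesis $T\in\mathcal{L}_{m,(q;p_1,\ldots,p_n)}(E_1,\ldots,E_n;F)$; in particular there is a finite constant $C>0$ such that \eqref{jup2} holds for $T$.

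\textbf{Step 2 (Collapsing the multi-sum).} Let $(x_k)_{k=1}^{\infty}\in\ell_{p_j}^w(E_j)$. In each slot $i\neq j$ use the sequence whose first entry is $y_i$ and whose remaining entries are $0$; in slot $j$ use $(x_k)_{k=1}^{\infty}$. For each $i\neq j$, the norm of the sequence $(y_i,0,0,\ldots)$ in $\ell_{p_i}^w(E_i)$ equals $\|y_i\|$. With this choice, $T(x_{k_1}^{(1)},\ldots,x_{k_n}^{(n)})$ vanishes unless $k_i=1$ for every $i\neq j$, and otherwise equals $u(x_{k_j})$. The multiple summing inequality \eqref{jup2} therefore reduces to
\[
\Bigl(\sum_{k=1}^{\infty}\|u(x_k)\|^{q}\Bigr)^{1/q}\leq C\,\Bigl(\prod_{i\neq j}\|y_i\|\Bigr)\,\bigl\|(x_k)_{k=1}^{\infty}\bigr\|_{w,p_j},
\]
which shows that $u\in\Pi_{q,p_j}(E_j;F)$ and hence $\mathcal{L}(E_j;F)=\Pi_{q,p_j}(E_j;F)$.

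\textbf{Main obstacle.} There is essentially no obstacle beyond bookkeeping; the trick is purely a choice of test sequences. The only subtlety is ensuring that the auxiliary data $\varphi_i,y_i$ exist, which only requires $E_i\neq\{0\}$ for $i\neq j$ (the degenerate case is vacuous anyway, since then the multilinear space is trivial and the hypothesis supplies no information). One could also remark that by the closed graph theorem the inclusion $\mathcal{L}\hookrightarrow\mathcal{L}_{m,(q;p_1,\ldots,p_n)}$ is automatically continuous, so the constant $C$ can in fact be taken uniform in $T$, but this is not needed for the statement as phrased.
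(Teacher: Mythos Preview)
Your proof is correct. The paper itself does not give a proof of this proposition (it only refers the reader to \cite{spp} for details), but your argument is precisely the standard one: form the finite-type $n$-linear map $T=\bigl(\prod_{i\neq j}\varphi_i\bigr)\,u$ and plug in singleton sequences in the slots $i\neq j$ so that the multiple-summing inequality collapses to the linear $(q,p_j)$-summing inequality for $u$.
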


Bohnenblust-Hille type results were also studied in a different perspective
(trying to replace $\frac{2n}{n+1}$ by $2$ by changing the $1$-weak norm by
some $p_{n}$-weak norm). If $(p_{k})_{k=0}^{\infty}$ is the sequence of real
numbers given by
\[
p_{0}=2\mbox{ and }p_{k+1}=\frac{2p_{k}}{1+p_{k}}\mbox{ for }k\geq0,
\]
then the following Bohnenblust-Hille type result is valid:

\begin{theorem}
[Botelho, Braunss, Junek, Pellegrino, 2009](\cite{botpams}) Let $E_{1}%
,\ldots,E_{n}$ be Banach spaces of cotype $2$. If $k$ is the natural number
such that $2^{k-1}<n\leq2^{k}$, then
\[
\mathcal{L}(E_{1},\ldots,E_{n};\mathbb{K})=\mathcal{L}_{m(2;p_{k},\ldots
,p_{k})}(E_{1},\ldots,E_{n};\mathbb{K}).
\]

\end{theorem}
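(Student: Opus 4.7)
The plan is to prove the result by induction on $k$, after first reducing to the top-of-the-range case $n = 2^k$. When $2^{k-1} < n < 2^k$, one appends $2^k - n$ scalar variables to obtain a $2^k$-linear form $\widetilde{U}$ on $E_1 \times \cdots \times E_n \times \mathbb{K} \times \cdots \times \mathbb{K}$; since $\mathbb{K}$ has cotype $2$, the $n = 2^k$ case applied to $\widetilde{U}$ (with constant-$1$ sequences in the added slots) recovers the original $n$-variable statement with the same exponent $p_k$. The base case $k = 0$ reduces to $\mathcal{L}(E_1; \mathbb{K}) = \Pi_{2,2}(E_1; \mathbb{K})$, which is immediate from the definition of $\|\cdot\|_{w,2}$.

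For the inductive step I would exploit the defining recurrence $1/p_{k+1} = 1/2 + 1/(2 p_k)$, which arises naturally when the $2^{k+1}$ variables are split into two blocks of size $2^k$. Fix $U \colon E_1 \times \cdots \times E_{2^{k+1}} \to \mathbb{K}$, sequences $(x_j^{(\ell)})_j \in \ell_{p_{k+1}}^w(E_\ell)$, and set $A = \{1, \ldots, 2^k\}$, $B = \{2^k+1, \ldots, 2^{k+1}\}$. For each multi-index $\mathbf{j}_A$, the partial $2^k$-linear form $v_{\mathbf{j}_A}$ on the $B$-block satisfies, by the induction hypothesis (and using $\|\cdot\|_{w,p_k} \leq \|\cdot\|_{w,p_{k+1}}$ since $p_{k+1} \leq p_k$),
\[
\sum_{\mathbf{j}_B} \bigl|v_{\mathbf{j}_A}(x_{j_\ell}^{(\ell)},\,\ell\in B)\bigr|^2 \;\leq\; C_k^2\,\|v_{\mathbf{j}_A}\|^2 \prod_{\ell\in B}\|(x_j^{(\ell)})\|_{w,p_{k+1}}^2.
\]
Summing over $\mathbf{j}_A$, the whole desired inequality reduces to the residual estimate $\sum_{\mathbf{j}_A} \|v_{\mathbf{j}_A}\|^2 \leq C\,\|U\|^2 \prod_{\ell \in A}\|(x_j^{(\ell)})\|_{w,p_{k+1}}^2$.

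The hard part will be this residual estimate. The natural candidate to attack it is the $2^k$-linear map $\Phi(x^{(1)},\ldots,x^{(2^k)}) = U(x^{(1)},\ldots,x^{(2^k)},\cdot,\ldots,\cdot)$ with values in $Y = \mathcal{L}(E_{2^k+1},\ldots,E_{2^{k+1}};\mathbb{K})$, but the induction hypothesis (stated for scalar-valued forms) cannot be applied directly to $\Phi$ because $Y$ need not have cotype $2$. My strategy to bypass this is to select, for each $\mathbf{j}_A$, near-optimal vectors $z^{(\ell)}_{\mathbf{j}_A}\in B_{E_\ell}$ ($\ell\in B$) so that $\|v_{\mathbf{j}_A}\| \leq 2\,\bigl|U\bigl(x^{(1)}_{j_1},\ldots,x^{(2^k)}_{j_{2^k}},z^{(2^k+1)}_{\mathbf{j}_A},\ldots,z^{(2^{k+1})}_{\mathbf{j}_A}\bigr)\bigr|$, reducing the residual estimate to a sum of squares of scalar evaluations of $U$, and then to exploit the cotype-$2$ property of each $E_\ell$ ($\ell\in A$) through a Kahane--Khintchine / Pisier-style Rademacher averaging argument, combined with the Bohnenblust--Hille inequality or the induction hypothesis on the $A$-block, in order to recover weak-$p_{k+1}$ norms. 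Verifying that this averaging produces precisely the exponent $p_{k+1}$ prescribed by the recurrence (and not a weaker one) is the technically delicate heart of the argument and is what forces the block sizes to be exactly $2^k$.
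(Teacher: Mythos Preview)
The paper is a survey and does not prove this theorem: it is stated with a reference to \cite{botpams}, and a few lines later the authors write ``The proofs of the above results are technical and we omit them.''  So there is no in-paper argument to compare against; what follows is an assessment of your proposal on its own terms.

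Your inductive architecture is sound.  Padding to $n=2^{k}$ with scalar slots is harmless, the base case $k=0$ is trivial, and splitting the $2^{k+1}$ variables into two blocks of size $2^{k}$ is exactly what the recurrence $1/p_{k+1}=1/2+1/(2p_{k})$ suggests.  Applying the induction hypothesis on the $B$-block and using $\|\cdot\|_{w,p_{k}}\le\|\cdot\|_{w,p_{k+1}}$ (valid because $p_{k+1}<p_{k}$) correctly reduces everything to the residual estimate
\[
\sum_{\mathbf{j}_{A}}\|v_{\mathbf{j}_{A}}\|^{2}\;\le\;C\,\|U\|^{2}\prod_{\ell\in A}\bigl\|(x_{j}^{(\ell)})_{j}\bigr\|_{w,p_{k+1}}^{2},
\]
and you are right that this is where the content lies.

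The gap is in your treatment of this estimate.  Once you choose near-maximizers $z^{(\ell)}_{\mathbf{j}_{A}}\in B_{E_{\ell}}$ for $\ell\in B$, the sum becomes
\[
\sum_{\mathbf{j}_{A}}\bigl|U\bigl(x^{(1)}_{j_{1}},\ldots,x^{(2^{k})}_{j_{2^{k}}},z^{(2^{k}+1)}_{\mathbf{j}_{A}},\ldots,z^{(2^{k+1})}_{\mathbf{j}_{A}}\bigr)\bigr|^{2},
\]
a sum over a \emph{single} multi-index $\mathbf{j}_{A}$ in which the $B$-arguments are coupled to that same index.  Neither the induction hypothesis on the $A$-block nor a Bohnenblust--Hille bound applies to such a sum: both require a product of independent summation indices, one per slot, with the remaining arguments held fixed.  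A Kahane--Khintchine Rademacher average does not decouple this either, because the $z$'s are not a fixed sequence carrying their own independent index --- they are functions of $\mathbf{j}_{A}$, so randomizing signs in the $A$-slots does not separate them.  You have correctly identified that the map $\Phi$ into $Y=\mathcal{L}(E_{2^{k}+1},\ldots,E_{2^{k+1}};\mathbb{K})$ is the natural object and that the obstacle is the lack of cotype~$2$ for $Y$; the pointwise-maximizer device is an attempt to sidestep this, but it trades one obstruction for another of the same strength.  As written, the argument stops exactly at the step where the cotype-$2$ hypothesis on the $E_{\ell}$'s must do real work, and a different structural ingredient (of the kind developed in \cite{botpams}) is needed to close it.
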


A very important contribution to the theory of multiple summing multilinear
operators was given in D. P\'{e}rez-Garc\'{\i}a's thesis, where several new
results and techniques are presented, inspiring several related papers. The
inclusion theorems proved by P\'{e}rez-Garc\'{\i}a deserves special attention:

\begin{theorem}
[P\'{e}rez-Garc\'{\i}a, 2003](\cite{pgtese, davidstudia})\label{bbn} If $1\leq
p\leq q<2$, then $\mathcal{L}_{m,p}(E_{1},...,E_{n};F)\subset\mathcal{L}%
_{m,q}(E_{1},...,E_{n};F).$
\end{theorem}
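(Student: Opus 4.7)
The plan is to mimic the classical linear argument (Pietsch Domination plus the natural $L_p$-inclusion for probability measures), now in a multilinear Pietsch-type setting. First, I would try to establish the following product-form domination for multiple $p$-summing mappings: for every $T\in\mathcal{L}_{m,p}(E_1,\ldots,E_n;F)$ there exist a constant $C>0$ and regular probability measures $\mu_k$ on each weak-$\ast$ compact set $K_k:=B_{E_k^{\ast}}$ such that
$$\|T(x_1,\ldots,x_n)\|\leq C\prod_{k=1}^{n}\left(\int_{K_k}|\varphi(x_k)|^{p}\,d\mu_k(\varphi)\right)^{1/p}$$
for every $(x_1,\ldots,x_n)\in E_1\times\cdots\times E_n$. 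In analogy with the proof of \thmref{ppk} sketched above, one would set up a suitable family of concave/convex functions on the compact convex set $P(K_1)\times\cdots\times P(K_n)$ and apply Ky Fan's Lemma to produce the $\mu_k$.

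Granted such a domination, take finite families $(x_j^{(k)})_{j=1}^{m}\subset E_k$ for $k=1,\ldots,n$. Raising the pointwise inequality above to the $q$-th power, summing over $j_1,\ldots,j_n$, and applying Fubini together with Jensen's inequality (valid because $q/p\geq 1$ and each $\mu_k$ is a probability measure), one obtains an estimate of the form
$$\sum_{j_1,\ldots,j_n}\|T(x_{j_1}^{(1)},\ldots,x_{j_n}^{(n)})\|^{q}\leq C^{q}\prod_{k=1}^{n}\int_{K_k}\left(\sum_{j}|\varphi(x_{j}^{(k)})|^{p}\right)^{q/p}\!d\mu_k(\varphi).$$
The final step is to bound each factor on the right by a constant times $\|(x_j^{(k)})_{j=1}^{m}\|_{w,q}^{q}$; this is where the hypothesis $q<2$ is brought in, via a Khintchine-type inequality applied to the scalar sequence $(\varphi(x_j^{(k)}))_j$, or equivalently via the linear Inclusion Theorem applied to the scalar coefficients arising from these evaluations. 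Integrating against $\mu_k$ (a probability measure) and taking the $q$-th root then yields the $\pi_{m,q}$-estimate.

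The main obstacle I anticipate is precisely this last step: converting the $\ell_p$-norm inside each integrand into an $\ell_q$-norm of the same scalar sequence while retaining control by the weak $\ell_q$-norm of the corresponding vector sequence. This conversion genuinely needs $q<2$, since the inclusion $\mathcal{L}_{m,p}\subset\mathcal{L}_{m,q}$ is known to fail for $q\geq 2$, reflecting the sharpness of the Bohnenblust--Hille exponent $\frac{2n}{n+1}$ in \thmref{ytr}. A secondary but nontrivial difficulty is the product-form Pietsch domination itself: a blanket product-form characterization of multiple summing operators does not hold in full generality (it actually characterizes the narrower class of dominated multilinear operators), so the precise version sufficient for the argument must be extracted carefully, possibly only within the range $p<2$, which is where P\'erez-Garc\'{\i}a's techniques in \cite{pgtese} become indispensable.
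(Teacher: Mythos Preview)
Your proposal has a genuine gap at its very first step. The product-form domination
\[
\|T(x_1,\ldots,x_n)\|\leq C\prod_{k=1}^{n}\left(\int_{B_{E_k^{\ast}}}|\varphi(x_k)|^{p}\,d\mu_k(\varphi)\right)^{1/p}
\]
is precisely the Pietsch--Geiss characterization of the class $\mathcal{L}_{d,p}$ of $p$-\emph{dominated} multilinear operators (stated in the paper in Section~5.1), and you correctly identify this yourself at the end. The hope that some usable variant might nonetheless hold for multiple $p$-summing operators when $p<2$ is unfounded: the inclusion $\mathcal{L}_{d,p}\subsetneq\mathcal{L}_{m,p}$ is strict already for bilinear forms on $\ell_1$ with $p=1$, since every such form is multiple $1$-summing (this is recorded in the proof of Theorem~\ref{novoteorema}) while not every such form is $1$-dominated (by Theorem~\ref{jja}, because $\ell_1\widehat{\otimes}_{\pi}\ell_1\neq\ell_1\widehat{\otimes}_{\varepsilon}\ell_1$). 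Hence no Ky Fan argument on $P(K_1)\times\cdots\times P(K_n)$ can produce the measures you want, and the remainder of the plan (Jensen, Khintchine on the integrand) has nothing to stand on.

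The paper omits the proof as technical, but P\'erez-Garc\'{\i}a's argument in \cite{davidstudia} does not pass through any pointwise domination. It is an inductive scheme on $n$: one freezes finite weakly $p$-summable sequences in all but one coordinate, observes that the resulting linear operator (into an auxiliary $\ell_p$-valued target) is $p$-summing, invokes the \emph{linear} inclusion $\Pi_p\subset\Pi_q$, and then must reconcile the $\ell_p$-structure carried by that target with the $\ell_q$-structure required for the next inductive step. That reconciliation is where the hypothesis $q<2$ is genuinely used, and it is this coordinatewise iteration --- not a global domination-plus-Jensen manoeuvre --- that drives the proof.
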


P\'{e}rez-Garc\'{\i}a has also shown that the above result cannot be extended
in the sense that for each $q>2$ there exists $T\in\mathcal{L}_{m,p}(^{2}%
\ell_{1};\mathbb{K})$ for $1\leq p\leq2$ which does not belong to
$\mathcal{L}_{m,q}(^{2}\ell_{1};\mathbb{K}).$

When the $F$ has cotype $2$ the result is slight better:

\begin{theorem}
[P\'{e}rez-Garc\'{\i}a, 2003](\cite{pgtese, davidstudia})\label{unv} If $1\leq
p\leq q\leq2$ and $F$ has cotype $2$, then $\mathcal{L}_{m,p}(E_{1}%
,...,E_{n};F)\subset\mathcal{L}_{m,q}(E_{1},...,E_{n};F).$
\end{theorem}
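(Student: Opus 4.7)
The plan is to observe that Theorem \ref{bbn} already covers every case $1\le p\le q<2$ with no cotype hypothesis on $F$, so the only genuinely new content in the statement is the endpoint $q=2$; it therefore suffices to show that if $F$ has cotype $2$ and $1\le p\le 2$, then $\mathcal{L}_{m,p}(E_1,\ldots,E_n;F)\subset\mathcal{L}_{m,2}(E_1,\ldots,E_n;F)$. Fix $T\in\mathcal{L}_{m,p}(E_1,\ldots,E_n;F)$ and sequences $(x_j^{(k)})_{j=1}^{\infty}\in\ell_{2}^{\text{weak}}(E_k)$ for $k=1,\ldots,n$; the aim is to bound the $n$-fold sum $\sum_{j_1,\ldots,j_n}\|T(x_{j_1}^{(1)},\ldots,x_{j_n}^{(n)})\|^{2}$ by a constant times $\prod_{k}\|(x_j^{(k)})\|_{w,2}^{2}$.

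The first tool I would use is the product-type Pietsch domination available for multiple $p$-summing multilinear operators: one may choose probability measures $\mu_k$ on $(B_{E_k^{\ast}},\sigma(E_k^{\ast},E_k))$ such that
\[
\|T(y_1,\ldots,y_n)\|\le \pi_{p}(T)\prod_{k=1}^{n}\Bigl(\int_{B_{E_k^{\ast}}}|\varphi_k(y_k)|^{p}\,d\mu_k(\varphi_k)\Bigr)^{1/p}
\]
for every choice of $y_k\in E_k$. The second tool is the cotype $2$ inequality for $F$, applied iteratively along each coordinate. Introducing independent Rademacher systems $(r_{j_k}(t_k))$ and exploiting the $n$-linearity of $T$ after every application of cotype, a telescoping argument yields
\[
\sum_{j_1,\ldots,j_n}\|T(x_{j_1}^{(1)},\ldots,x_{j_n}^{(n)})\|^{2}\le C_{2}(F)^{2n}\int_{[0,1]^{n}}\Bigl\|T\Bigl(\sum_{j_1}r_{j_1}(t_1)x_{j_1}^{(1)},\ldots,\sum_{j_n}r_{j_n}(t_n)x_{j_n}^{(n)}\Bigr)\Bigr\|^{2}dt_1\cdots dt_n.
\]

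To close the estimate I would substitute the product domination into this integrand, split by Fubini into $n$ independent pieces
\[
I_k:=\int_{0}^{1}\Bigl(\int_{B_{E_k^{\ast}}}\Bigl|\sum_{j}r_{j}(t)\varphi_k(x_{j}^{(k)})\Bigr|^{p}d\mu_k\Bigr)^{2/p}dt,
\]
and bound each $I_k$ via Minkowski's integral inequality with outer exponent $2/p\ge 1$. The Minkowski step swaps the $d\mu_k$ and $dt$ integrations; afterwards the Rademacher orthogonality identity $\int_0^1|\sum r_j(t)a_j|^2dt=\sum|a_j|^2$ together with the definition of the weak $2$-norm give $I_k\le\|(x_j^{(k)})\|_{w,2}^{2}$. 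Assembling all the factors produces the required inequality with constant $C_{2}(F)^{n}\pi_{p}(T)$, showing that $T\in\mathcal{L}_{m,2}(E_1,\ldots,E_n;F)$.

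The step I expect to require the most care is the Minkowski interchange: this is precisely where both the cotype $2$ hypothesis on $F$ and the constraint $p\le 2$ enter in an indispensable way, since Minkowski's integral inequality with exponent $2/p$ would fail for $p>2$. This observation is itself consistent with the sharpness of Theorem \ref{bbn}, which cannot reach $q=2$ without extra structure on $F$. Invoking the product Pietsch domination for multiple $p$-summing operators is nontrivial as a black box, but it is a standard fact that I would simply quote.
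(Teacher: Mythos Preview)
There is a genuine gap in your argument: the ``product-type Pietsch domination'' you invoke,
\[
\|T(y_1,\ldots,y_n)\|\le \pi_{p}(T)\prod_{k=1}^{n}\Bigl(\int_{B_{E_k^{\ast}}}|\varphi_k(y_k)|^{p}\,d\mu_k\Bigr)^{1/p},
\]
is \emph{not} a characterization of multiple $p$-summing operators. It is precisely the Pietsch--Geiss domination that characterizes the class $\mathcal{L}_{d,p}$ of $p$-\emph{dominated} multilinear operators (Theorem~5.1 in this paper). Since $\mathcal{L}_{d,p}\subsetneq\mathcal{L}_{m,p}$ in general, quoting this as a ``standard fact'' for the larger class is illegitimate, and the rest of your estimate --- the Fubini splitting and the Minkowski interchange --- collapses without it. Put differently, your argument actually proves the (correct but much weaker) inclusion $\mathcal{L}_{d,p}\subset\mathcal{L}_{m,2}$ when $F$ has cotype~$2$.

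The iterated cotype-$2$ Rademacher step and your reduction to the endpoint $q=2$ via Theorem~\ref{bbn} are both sound; the problem is solely the black box you feed into them. The paper itself omits the proof (see the sentence following Theorem~4.7), referring instead to P\'erez-Garc\'{\i}a's thesis and \cite{davidstudia}; the original argument there does \emph{not} proceed through a pointwise product domination of this kind, precisely because none is available for $\mathcal{L}_{m,p}$. To repair your approach you would need a genuinely multilinear tool that controls $T$ on the Rademacher averages without reducing to the $p$-dominated class.
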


When the spaces from the domain have cotype $2$, the inclusions from Theorem
\ref{bbn} become coincidences (for a simple proof we refer to \cite{michels,
enama}; the main tool used in the proof are results from \cite{arregui}):

\begin{theorem}
[Botelho, Pellegrino, 2008 and Popa, 2009](\cite{enama, popa}) If $1\leq
p,q<2$ and $E_{1},...,E_{n}$ have cotype $2$, then%
\[
\mathcal{L}_{m,p}(E_{1},...,E_{n};F)=\mathcal{L}_{m,q}(E_{1},...,E_{n};F).
\]

\end{theorem}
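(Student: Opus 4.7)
The first move is a reduction. By P\'erez-Garc\'ia's Theorem~\ref{bbn}, $\mathcal{L}_{m,p}\subset\mathcal{L}_{m,q}$ already holds for all $1\le p\le q<2$ without any cotype hypothesis, so the content of the theorem is the reverse inclusion under the cotype-$2$ assumption on the domains. Moreover, once we establish $\mathcal{L}_{m,q}\subset\mathcal{L}_{m,1}$ in the cotype-$2$ setting, another application of Theorem~\ref{bbn} gives $\mathcal{L}_{m,1}\subset\mathcal{L}_{m,p}$ for every $p\in[1,2)$ and closes the argument. Thus it suffices to prove the single statement: if each $E_k$ has cotype $2$ and $T\in\mathcal{L}_{m,q}(E_1,\ldots,E_n;F)$ with $1\le q<2$, then $T\in\mathcal{L}_{m,1}(E_1,\ldots,E_n;F)$.

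The cotype-$2$ hypothesis enters only through the Orlicz property. Indeed, the easy half of the Maurey--Pisier--Talagrand theorem (whose proof is given in the excerpt) yields, for each $E_k$ and each weak-$1$ sequence $(x_j^{(k)})_j\subset E_k$, the inequality
\[
\Bigl(\sum_j \|x_j^{(k)}\|^2\Bigr)^{1/2}\le C_2(E_k)\,\|(x_j^{(k)})\|_{w,1}.
\]
We also use the contractive inclusion $\ell_1^w(E_k)\subset\ell_q^w(E_k)$, which is elementary.

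The core of the argument follows the sequence-space approach of Arregui--Blasco \cite{arregui} exploited in \cite{enama} and \cite{popa}. The idea is to factor each weak-$1$ sequence in $E_k$ as a product of a non-negative scalar $\ell_2$-sequence (whose $\ell_2$-norm is controlled via the Orlicz inequality above) and a residual sequence in $E_k$ with good weak-$q$ behaviour; then substitute into the multiple-$q$-summing inequality for $T$, pulling the scalar multipliers outside the multi-index sum. The resulting expression couples an $\ell_q$-control on the values $\|T(\ldots)\|$ to a product of $n$ independent scalar $\ell_2$-sequences (one per variable). A carefully chosen H\"older inequality exploits the slack $2-q>0$ on each coordinate to upgrade the $\ell_q$-summation on the left to an $\ell_1$-summation, with constant $\pi_q(T)\prod_k C_2(E_k)$ times a universal factor depending only on $n$ and $q$, which is exactly the claim that $T\in\mathcal{L}_{m,1}$.

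I expect the main obstacle to be precisely this last H\"older exchange. An $\ell_q$-estimate cannot in general be strengthened to an $\ell_1$-estimate without extra structure; it is the cotype-$2$ assumption on \emph{all} $n$ coordinates that supplies the missing information, via $n$ independent $\ell_2$-factorizations. Combined with the multi-index structure of the multiple-summing inequality, these factorizations contribute just enough extra summability to close the gap between $q<2$ and $1$. Matching the H\"older exponents across the $n$ variables and ensuring that the final constant remains finite and depends only on $C_2(E_k)$, $\pi_q(T)$, $n$, and $q$ is where the main technical effort of the proof lies.
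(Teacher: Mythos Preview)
The paper does not actually prove this theorem; it only cites \cite{michels,enama,popa} and remarks that the main tool comes from Arregui--Blasco \cite{arregui}. Your reduction via Theorem~\ref{bbn} to the single inclusion $\mathcal{L}_{m,q}\subset\mathcal{L}_{m,1}$ is correct, and the overall scheme (factor each weakly $1$-summable sequence in the cotype-$2$ domain as a scalar sequence times a weakly $q$-summable residual, then pull the scalars out and apply H\"older against the multiple-$q$-summing estimate) is exactly the approach the paper points to. In that sense there is nothing to compare: your plan \emph{is} the plan.

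The only caution is that your write-up is a strategy, not a proof, and the step you flag as ``the main obstacle'' is genuinely the whole content. Note in particular that the Orlicz inequality you quote gives $(\|x_j^{(k)}\|)\in\ell_2$, but the obvious choice $\alpha_j^{(k)}=\|x_j^{(k)}\|$, $y_j^{(k)}=x_j^{(k)}/\|x_j^{(k)}\|$ produces a residual that is merely bounded, not in $\ell_q^w(E_k)$; so Orlicz alone does not yield the factorization you describe. What is actually needed is the Arregui--Blasco identification of $\ell_1^w(E)$ with a product $\ell_{q^*}\cdot\ell_q^w(E)$ when $E$ has cotype $2$ (this is where \cite{arregui} enters, and this is where the cotype hypothesis is genuinely used---the factorization fails, for instance, in $c_0$). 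Once that factorization is in hand, the H\"older step with conjugate exponents $q,q^*$ goes through in one line and gives $\pi_1(T)\le\pi_q(T)\prod_k C_k$, closing the argument. So your outline is correct, but to turn it into a proof you should name the precise Arregui--Blasco result rather than derive the factorization from Orlicz.
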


Recently, in \cite{michels}, it was shown (using complex interpolation and an
argument of complexification) that a more general version of Theorem \ref{unv}
is valid when the spaces from the domain are $\mathcal{L}_{\infty}$-spaces:

\begin{theorem}
[Botelho, Michels, Pellegrino, 2010]Let $1\leq p\leq q\leq\infty$ and
$E_{1},\ldots,E_{n}$ {be} $\mathcal{L}_{\infty}$-spaces. Then $\mathcal{L}%
_{m,p}(E_{1},\ldots,E_{n};F)\subset\mathcal{L}_{m,q}(E_{1},\ldots,E_{n};F)$.
\end{theorem}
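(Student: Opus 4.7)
The plan is to combine a complexification step with Calder\'on's complex interpolation, applied to the natural sequence-valued extension of $T$. Since the complexification of a real $\mathcal{L}_\infty$-space is again a complex $\mathcal{L}_\infty$-space, and since for any $n$-linear operator $T$ between real Banach spaces its canonical $\mathbb{C}$-multilinear extension $\widetilde{T}$ is multiple $p$-summing iff $T$ is, with equivalent norms (up to a constant depending only on $n$), I would first reduce to the complex case, where complex interpolation is available.

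The crucial identification is that, for $E=\ell_\infty^N$, the weak $\ell_p$-sequence space coincides isometrically with an $\ell_\infty$-valued strong sequence space. Writing $x_j = (x_j^{(i)})_{i=1}^N$ and using that the function $\varphi \mapsto \sum_j|\varphi(x_j)|^p$ is convex on $B_{\ell_1^N}$ and hence attains its supremum at the extreme points $\{e^{i\alpha}e_i\}$, one obtains
\[
\|(x_j)_j\|_{w,p} = \max_{1 \le i \le N}\Bigl(\sum_j |x_j^{(i)}|^p\Bigr)^{1/p},
\]
so that $\ell_p^w(\ell_\infty^N) = \ell_\infty^N(\ell_p)$. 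By the local definition of $\mathcal{L}_\infty$-spaces this identification persists, up to a uniform constant, for each $E_k$. With this in hand, I would observe that $T$ is multiple $r$-summing precisely when the induced multilinear map
\[
\widehat{T}\colon \ell_r^w(E_1) \times \cdots \times \ell_r^w(E_n) \longrightarrow \ell_r(F),\quad \bigl((x_j^{(k)})_j\bigr)_k \longmapsto \bigl(T(x_{j_1}^{(1)},\ldots,x_{j_n}^{(n)})\bigr)_{j_1,\ldots,j_n},
\]
is bounded. By hypothesis $\widehat{T}$ is bounded for $r=p$; from the continuity of $T$ together with $\ell_\infty^w(E) = \ell_\infty(E)$ it is trivially bounded for $r=\infty$, with constant $\|T\|$. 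I would then invoke the multilinear Calder\'on complex interpolation theorem with parameter $\theta$ such that $1/q = (1-\theta)/p$, together with the standard identities $[\ell_\infty^N(\ell_p), \ell_\infty^N(\ell_\infty)]_\theta = \ell_\infty^N(\ell_q)$ and $[\ell_p(F), \ell_\infty(F)]_\theta = \ell_q(F)$, to conclude boundedness of $\widehat{T}$ for $r=q$, which is exactly $T \in \mathcal{L}_{m,q}(E_1,\ldots,E_n;F)$.

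The main obstacle is the joint multilinear interpolation: one has to interpolate $n$ domain factors and the codomain simultaneously, with constants independent of the local dimension $N$, so that the local-to-global transition from $\ell_\infty^N$ to a general $\mathcal{L}_\infty$-space is valid. A secondary point worth emphasizing is that the $\mathcal{L}_\infty$-structure is used only on the domain side: the target $F$ plays a purely passive role through the standard interpolation identity for $\ell_r(F)$, which is precisely why no cotype hypothesis on $F$ is required and why the inclusion now extends beyond the cotype-$2$ range of Theorem~\ref{unv} all the way up to $q=\infty$.
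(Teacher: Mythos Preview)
Your proposal is correct and follows essentially the same route that the paper attributes to \cite{michels}: the paper does not spell out the argument but explicitly states that the result is obtained ``using complex interpolation and an argument of complexification,'' which is precisely your plan---reduce to the complex case, identify $\ell_p^w(\ell_\infty^N)$ with $\ell_\infty^N(\ell_p)$, and interpolate the induced sequence-valued multilinear map between the endpoints $r=p$ and $r=\infty$. The technical points you flag (multilinear Calder\'on interpolation with uniform constants in the local dimension, and the passage from $\ell_\infty^N$ to general $\mathcal{L}_\infty$-spaces) are exactly the ones that make the proof ``technical'' in the paper's words, and you have identified them correctly.
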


The proofs of the above results are technical and we omit them. For other
related results we mention (\cite{botpams, michels, davidstudia}).

Coincidence theorems are also a fruitful subject in the context of multiple
summing operators. For example, D. P\'{e}rez-Garc\'{\i}a proved that
Grothendieck's Theorem is valid for multiple summing multilinear operators:

\begin{theorem}
[P\'{e}rez-Garc\'{\i}a, 2003]\cite{pgtese} If $1\leq p\leq2$, then
$\mathcal{L}_{m,p}(^{n}\ell_{1};\ell_{2})=\mathcal{L}(^{n}\ell_{1};\ell_{2}).$
\end{theorem}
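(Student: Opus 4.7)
The plan is first to reduce the statement to the case $p=1$, and then to prove that case by induction on $n$, modeled on the proof of Theorem \ref{kyp}. Since $\ell_{2}$ has cotype $2$, Theorem \ref{unv} gives $\mathcal{L}_{m,1}(^{n}\ell_{1};\ell_{2})\subset\mathcal{L}_{m,p}(^{n}\ell_{1};\ell_{2})$ for every $1\leq p\leq 2$, and the reverse containment in $\mathcal{L}(^{n}\ell_{1};\ell_{2})$ is trivial. Hence it suffices to prove $\mathcal{L}(^{n}\ell_{1};\ell_{2})=\mathcal{L}_{m,1}(^{n}\ell_{1};\ell_{2})$ for each $n$.

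We argue by induction on $n$. The base case $n=1$ is precisely Theorem \ref{kyp}. For the inductive step, fix $T\in\mathcal{L}(^{n}\ell_{1};\ell_{2})$ with $\|T\|\leq 1$ and sequences $(x_{j}^{(k)})_{j=1}^{\infty}\in\ell_{1}^{w}(\ell_{1})$ with $\|(x_{j}^{(k)})\|_{w,1}\leq 1$ for $k=1,\ldots,n$. Writing $x^{(k)}_{j_{k}}=\sum_{i_{k}}a^{(k)}_{i_{k}j_{k}}e_{i_{k}}$, reducing to finite sums via the canonical projections on $\ell_{1}$ as in the proof of Theorem \ref{kyp}, and selecting norming functionals $y_{j_{1},\ldots,j_{n}}\in B_{\ell_{2}}$ with $\|T(x^{(1)}_{j_{1}},\ldots,x^{(n)}_{j_{n}})\|_{\ell_{2}}=\langle T(x^{(1)}_{j_{1}},\ldots,x^{(n)}_{j_{n}}),y_{j_{1},\ldots,j_{n}}\rangle$, the partial sum to be bounded takes the form
\[
S_{M}=\sum_{j_{1},\ldots,j_{n}\leq M}\,\sum_{i_{1},\ldots,i_{n}}\Big(\prod_{k=1}^{n}a^{(k)}_{i_{k}j_{k}}\Big)\langle T(e_{i_{1}},\ldots,e_{i_{n}}),y_{j_{1},\ldots,j_{n}}\rangle.
\]

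The key move is Grothendieck's Inequality (Theorem \ref{dggg}): for the pair $(i_{n},j_{n})$ with the other indices frozen, the vectors $u_{i_{n}}:=T(e_{i_{1}},\ldots,e_{i_{n}})$ and $v_{j_{n}}:=y_{j_{1},\ldots,j_{n}}$ lie in $B_{\ell_{2}}$, so Grothendieck yields
\[
\Big|\sum_{i_{n},j_{n}}a^{(n)}_{i_{n}j_{n}}\langle u_{i_{n}},v_{j_{n}}\rangle\Big|\leq K_{G}\sup_{|s_{i_{n}}|,|t_{j_{n}}|\leq 1}\Big|\sum_{i_{n},j_{n}}a^{(n)}_{i_{n}j_{n}}s_{i_{n}}t_{j_{n}}\Big|\leq K_{G},
\]
the last inequality being exactly the computation of the inner supremum already carried out in the proof of Theorem \ref{kyp}. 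The principal obstacle is that passing this per-$(i_{n},j_{n})$ estimate through the outer sum by the triangle inequality collapses the cancellation in the matrices $a^{(k)}$, $k<n$, and leaves only $\prod_{k<n}\sum_{j_{k}}\|x^{(k)}_{j_{k}}\|_{\ell_{1}}$, a quantity not controlled by the weak-$1$-norms $\|(x^{(k)}_{j})\|_{w,1}$.

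The right way to close the induction is to apply the Grothendieck device jointly to all $n$ index pairs---equivalently, to invoke a suitable multilinear Grothendieck-type inequality---so that scalars $(s^{(k)}_{i_{k}}),(t^{(k)}_{j_{k}})$ in the unit ball are produced simultaneously for every $k$ and each resulting scalar sum is controlled by $\|(x^{(k)}_{j})\|_{w,1}\leq 1$; alternatively, after applying Grothendieck once in the variables $(i_{n},j_{n})$, one reinterprets $S_{M}$ as the evaluation of an auxiliary $(n-1)$-linear operator built from $T$ and the family $(y_{j_{1},\ldots,j_{n}})$ on the sequences $(x^{(k)}_{j})_{k<n}$, and closes the induction by invoking the hypothesis on this auxiliary operator. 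Producing the right auxiliary object with constants uniform in $M$ and in the truncation level is the technical heart of the argument.
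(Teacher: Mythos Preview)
Your reduction to $p=1$ via Theorem \ref{unv} is correct, and the instinct to induct on $n$ starting from Theorem \ref{kyp} is reasonable. But the proposal is not a proof: you identify the obstacle (applying Grothendieck in one pair of indices and then the triangle inequality destroys the weak-$1$ control in the remaining variables) and then offer two escape routes without carrying either out. The first, a ``multilinear Grothendieck-type inequality'' applied to all index pairs at once, is not a result you can simply invoke; and the second, building an auxiliary $(n-1)$-linear operator from $T$ and the norming vectors $y_{j_{1},\ldots,j_{n}}$ after one application of Grothendieck, does not work as stated because Grothendieck outputs a supremum over scalars, not a single choice, so the putative auxiliary operator depends on those scalars and there is no uniform bound to feed back into the induction hypothesis.

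The paper's argument (sketched for the more general Theorem \ref{novoteorema}) bypasses this difficulty entirely by linearization: since $\widehat{\otimes}_{\pi}^{n}\ell_{1}$ is isometrically $\ell_{1}$, any $A\in\mathcal{L}(^{n}\ell_{1};\ell_{2})$ has a linearization $A_{L}\in\mathcal{L}(\ell_{1};\ell_{2})$, which is absolutely $1$-summing by the linear Grothendieck theorem. The remaining task is to bound $\Vert(x_{j_{1}}^{(1)}\otimes\cdots\otimes x_{j_{n}}^{(n)})\Vert_{w,1}$ by $\prod_{k}\Vert(x_{j}^{(k)})\Vert_{w,1}$, and this follows (with constant $K_{G}^{2n-2}$) from the \emph{bilinear scalar} coincidence $\mathcal{L}(^{2}\ell_{1};\mathbb{K})=\mathcal{L}_{m,1}(^{2}\ell_{1};\mathbb{K})$. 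If you prefer an inductive route, the one that actually closes is that of Theorem \ref{teorema}: the auxiliary object is not an $(n-1)$-linear map but the \emph{linear} operator $A_{1}^{(m)}:\ell_{1}\to\ell_{2}(\ell_{2})$, $x\mapsto(A(x,x_{j_{2}}^{(2)},\ldots,x_{j_{n}}^{(n)}))_{j_{2},\ldots,j_{n}}$, and the point is that $\ell_{2}(\ell_{2})$ is again a Hilbert space, so the linear Grothendieck theorem applies to $A_{1}^{(m)}$ with a constant independent of $m$.
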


We sketch the proof of a more general result from \cite{botp}, which is
inspired in P\'{e}rez-Garc\'{\i}a's ideas:

\begin{theorem}
[Botelho, Pellegrino, 2009]\label{novoteorema} Let $r\geq s\geq1$. If
$\mathcal{L}(\ell_{1};F)=\Pi_{r;s}(\ell_{1};F)$, then
\[
\mathcal{L}(^{n}\ell_{1};F)=\mathcal{L}_{m,(r;\,\min\{s,2\})}(^{n}\ell_{1};F)
\]
for every $n\in\mathbb{N}$.
\end{theorem}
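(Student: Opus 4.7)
The plan is to linearize $T$ using the fact that the projective tensor product of $n$ copies of $\ell_1$ is again isometrically $\ell_1$, apply the hypothesis to the resulting linear operator, and then reduce the remaining estimate to an already known scalar-valued coincidence on $\ell_1$.

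The case $n = 1$ is immediate: since $\ell_{q_1}^{w}(E)\subseteq\ell_{q_2}^{w}(E)$ whenever $q_1\leq q_2$, one has the automatic inclusion $\Pi_{r;s}(\ell_1;F)\subseteq\Pi_{r;\min\{s,2\}}(\ell_1;F)$, so the hypothesis upgrades to
\[
\mathcal{L}(\ell_1;F)=\Pi_{r;s}(\ell_1;F)\subseteq\Pi_{r;\min\{s,2\}}(\ell_1;F)\subseteq\mathcal{L}(\ell_1;F).
\]

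For $n\geq 2$ I would use the canonical isometric identification $\ell_1\,\widehat{\otimes}_\pi\cdots\widehat{\otimes}_\pi\,\ell_1\cong\ell_1(\mathbb{N}^n)\cong\ell_1$ to linearize: every $T\in\mathcal{L}(^n\ell_1;F)$ corresponds to an operator $\bar T\in\mathcal{L}(\ell_1;F)$ of the same norm, determined on elementary tensors by $\bar T(x^{(1)}\otimes\cdots\otimes x^{(n)})=T(x^{(1)},\ldots,x^{(n)})$. By the hypothesis together with the open mapping theorem, $\bar T$ is $(r;s)$-summing and there is a constant $M$ (independent of $T$) with $\pi_{r,s}(\bar T)\leq M\|T\|$. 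Setting $p=\min\{s,2\}$ and taking sequences $(x_j^{(k)})_j\in\ell_p^{w}(\ell_1)$ for $k=1,\ldots,n$, applying the $(r;s)$-summability of $\bar T$ to the multi-indexed sequence of elementary tensors gives
\[
\Bigl(\sum_{j_1,\ldots,j_n}\|T(x_{j_1}^{(1)},\ldots,x_{j_n}^{(n)})\|^r\Bigr)^{1/r}\leq M\|T\|\cdot\bigl\|(x_{j_1}^{(1)}\otimes\cdots\otimes x_{j_n}^{(n)})_{j_1,\ldots,j_n}\bigr\|_{w,s}.
\]

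The main obstacle is to bound the weak $s$-norm of the array of elementary tensors by $C\prod_{k=1}^n\|(x_j^{(k)})\|_{w,p}$. Because $\ell_1^{*}=\ell_\infty$ and every $\varphi\in\ell_\infty(\mathbb{N}^n)$ corresponds to an $n$-linear form $U_\varphi\in\mathcal{L}(^n\ell_1;\mathbb{K})$ with $\|U_\varphi\|=\|\varphi\|_\infty$, this estimate is equivalent to the scalar-valued coincidence $\mathcal{L}(^n\ell_1;\mathbb{K})=\mathcal{L}_{m,(s;p)}(^n\ell_1;\mathbb{K})$, with the norms comparable through the open mapping theorem. For $s\leq 2$ this is exactly P\'erez-Garc\'ia's multilinear Grothendieck theorem $\mathcal{L}_{m,1}(^n\ell_1;\ell_2)=\mathcal{L}(^n\ell_1;\ell_2)$ (specialized to scalar targets through the isometric embedding $\mathbb{K}\hookrightarrow\ell_2$), combined with his inclusion theorem (Theorem \ref{unv}), which is available since $\mathbb{K}$ has cotype $2$. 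For $s>2$ the case $p=2$ follows from the $s=2$ case by the elementary monotonicity $\|a\|_s\leq\|a\|_2$ for non-negative sequences, which gives $\mathcal{L}_{m,(2;2)}\subseteq\mathcal{L}_{m,(s;2)}$. Combining this scalar coincidence with the previous display finishes the proof.
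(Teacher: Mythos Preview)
Your argument is correct and follows essentially the same route as the paper: linearize via $\widehat{\otimes}_\pi^n\ell_1\cong\ell_1$, apply the hypothesis to the linearization, and then control $\|(x_{j_1}^{(1)}\otimes\cdots\otimes x_{j_n}^{(n)})\|_{w,s}$ by $\prod_k\|(x_j^{(k)})\|_{w,\min\{s,2\}}$ through a scalar-valued coincidence on $\ell_1$. The only cosmetic difference is that the paper obtains this last estimate by iterating the bilinear scalar coincidence $\mathcal{L}_{m,p}(^{2}\ell_1;\mathbb{K})=\mathcal{L}(^{2}\ell_1;\mathbb{K})$ from \cite{jmaa} (yielding the explicit constant $K_G^{2n-2}$), whereas you invoke the full $n$-linear P\'erez-Garc\'{\i}a theorem directly.
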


\begin{proof}
(Sketch) In \cite[Theorem 3.4]{jmaa} it is shown that when $1\leq p\leq2,$
then $\mathcal{L}_{m,p}(^{2}\ell_{1};\mathbb{K})=\mathcal{L}(^{2}\ell
_{1};\mathbb{K}),$ and
\begin{equation}
\pi_{p}(\cdot)\leq K_{G}^{2}\Vert\cdot\Vert. \label{ljg}%
\end{equation}

Let $(x_{j}^{(1)})_{j=1}^{m_{1}},\ldots,(x_{j}^{(n)})_{j=1}^{m_{n}}$ be $n$
finite sequences in $\ell_{1}$. Using that $\widehat{\otimes}_{\pi}^{k}%
\ell_{1}$ is isometrically isomorphic to $\ell_{1}$ and (\ref{ljg}), one can
prove that, for every $1\leq p\leq2$,
\[
\left\Vert (x_{j_{1}}^{(1)}\otimes\cdots\otimes x_{j_{n}}^{(n)})_{j_{1}%
,\ldots,j_{n}=1}^{m_{1},\ldots,m_{n}}\right\Vert _{w,p}\leq K_{G}%
^{2n-2}\left\Vert (x_{j}^{(1)})_{j=1}^{m_{1}}\right\Vert _{w,p}\cdots
\left\Vert (x_{j}^{(n)})_{j=1}^{m_{n}}\right\Vert _{w,p}%
\]
\indent Let $A\in\mathcal{L}(^{n}\ell_{1};F)$. By $A_{L}$ we mean the
linearization of $A$ on $\widehat{\otimes}_{\pi}^{n}\ell_{1}$, that is
$A_{L}\in\mathcal{L}(\widehat{\otimes}_{\pi}^{n}\ell_{1};F)$ and $A_{L}%
(x_{1}\otimes\cdots\otimes x_{n})=A(x_{1},\ldots,x_{n})$ for every $x_{j}%
\in\ell_{1}$. Since $\widehat{\otimes}_{\pi}^{n}\ell_{1}$ is isometrically
isomorphic to $\ell_{1}$, by assumption we have that $A_{L}$ is $(r;s)$%
-summing and $\pi_{r;s}(A_{L})\leq M\Vert A_{L}\Vert=M\Vert A\Vert$, where $M$
is a constant independent of $A$. Using the claim with $p=\min\{s,2\}$ we get
\begin{align*}
\left(  \sum_{j_{1},\ldots,j_{n}=1}^{m_{1},\ldots,m_{n}}\left\Vert A(x_{j_{1}%
}^{(1)},\ldots,x_{j_{n}}^{(n)})\right\Vert ^{r}\right)  ^{\frac{1}{r}}  &
\leq\left(  \sum_{j_{1},\ldots,j_{n}=1}^{m_{1},\ldots,m_{n}}\left\Vert
A_{L}(x_{j_{1}}^{(1)}\otimes\cdots\otimes x_{j_{n}}^{(n)})\right\Vert
^{r}\right)  ^{\frac{1}{r}}\\
&  \leq\pi_{r;s}(A_{L})\left\Vert (x_{j_{1}}^{(1)}\otimes\cdots\otimes
x_{j_{n}}^{(n)})_{j_{1},\ldots,j_{n}=1}^{m_{1},\ldots,m_{n}}\right\Vert
_{w,s}\\
&  \leq\pi_{r;s}(A_{L})\left\Vert (x_{j_{1}}^{(1)}\otimes\cdots\otimes
x_{j_{n}}^{(n)})_{j_{1},\ldots,j_{n}=1}^{m_{1},\ldots,m_{n}}\right\Vert
_{w,\min\{s,2\}}\\
&  \leq M\Vert A\Vert K_{G}^{2n-2}\left\Vert (x_{j}^{(1)})_{j=1}^{m_{1}%
}\right\Vert _{w,\min\{s,2\}}\cdots\left\Vert (x_{j}^{(n)})_{j=1}^{m_{n}%
}\right\Vert _{w,\min\{s,2\}},
\end{align*}
which shows that $A$ is multiple $(r;\min\{s,2\})$-summing.\bigskip
\end{proof}

\begin{corollary}
[Botelho, Pellegrino, 2009](\cite{botp}) Let $1\leq p\leq2$, $r\geq p$ and let
$F$ be a Banach space. The following assertions are equivalent:\newline%
\textrm{(a)} $\mathcal{L}(\ell_{1};F)=\mathcal{L}_{m(r;p)}(^{n}\ell_{1}%
;F)$.\newline\textrm{(b)} $\mathcal{L}(^{n}\ell_{1};F)=\mathcal{L}%
_{m(r;p)}(^{n}\ell_{1};F)$ for every $n\in\mathbb{N}$.\newline\textrm{(c)}
$\mathcal{L}(^{n}\ell_{1};F)=\mathcal{L}_{m(r;p)}(^{n}\ell_{1};F)$ for some
$n\in\mathbb{N}$.
\end{corollary}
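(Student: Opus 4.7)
The plan is to establish the cycle (a) $\Rightarrow$ (b) $\Rightarrow$ (c) $\Rightarrow$ (a), reading (a) in the only way consistent with \thmref{novoteorema}, namely as the linear coincidence $\mathcal{L}(\ell_{1};F)=\Pi_{r;p}(\ell_{1};F)$. The implication (a) $\Rightarrow$ (b) is immediate from \thmref{novoteorema}: since $1\leq p\leq 2$ we have $\min\{p,2\}=p$, so the theorem applied with $s=p$ yields the coincidence for every $n\in\mathbb{N}$. The implication (b) $\Rightarrow$ (c) is trivial.

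The substantive direction is (c) $\Rightarrow$ (a), for which I would use a standard \emph{multilinear-to-linear descent} via a rank-one-in-the-extra-slots extension. Fix an $n\geq 2$ for which (c) holds (the case $n=1$ is immediate). Given $T\in\mathcal{L}(\ell_{1};F)$, define $\widetilde T\in\mathcal{L}(^{n}\ell_{1};F)$ by
\[
\widetilde T(x^{(1)},x^{(2)},\ldots,x^{(n)})=e_{1}^{\ast}(x^{(2)})\cdots e_{1}^{\ast}(x^{(n)})\,T(x^{(1)}),
\]
where $e_{1}^{\ast}\in\ell_{1}^{\ast}$ is the first-coordinate functional, which has norm $1$ and satisfies $e_{1}^{\ast}(e_{1})=1$. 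By (c), $\widetilde T$ is multiple $(r;p)$-summing with some finite constant $\pi_{r;p}(\widetilde T)$. For a finite family $(x_{j})_{j=1}^{m}\subset\ell_{1}$, I would apply the multiple summability inequality to the sequence $(x_{j})_{j=1}^{m}$ in the first slot and the length-one sequences $(e_{1})$ in the remaining $n-1$ slots. Since $\widetilde T(x_{j},e_{1},\ldots,e_{1})=T(x_{j})$ and $\|(e_{1})\|_{w,p}=\|e_{1}\|=1$, the multiple sum on the left collapses to a single sum, and the inequality becomes
\[
\left(\sum_{j=1}^{m}\|T(x_{j})\|^{r}\right)^{1/r}\leq \pi_{r;p}(\widetilde T)\,\|(x_{j})_{j=1}^{m}\|_{w,p},
\]
valid for all $m$. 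This is precisely the $(r;p)$-summability of $T$, so (a) holds.

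The only direction requiring any thought is (c) $\Rightarrow$ (a), but the rank-one extension trick is routine, so I do not anticipate any genuine obstacle. The sole technical verification is that the weak $p$-norm of a single-vector ``sequence'' equals the norm of that vector, which is immediate from the definition. A natural variant of the construction---taking any norming pair $(\varphi,a)\in B_{\ell_{1}^{\ast}}\times \ell_{1}$ with $\varphi(a)=1$ in place of $(e_{1}^{\ast}, e_{1})$---would work equally well; the specific choice is only for concreteness.
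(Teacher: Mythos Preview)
Your argument is correct and matches the paper's intended route: (a) $\Rightarrow$ (b) is precisely \thmref{novoteorema} with $s=p$ (using $\min\{p,2\}=p$), (b) $\Rightarrow$ (c) is trivial, and your rank-one descent for (c) $\Rightarrow$ (a) is exactly the content of the Proposition stated earlier in the paper (that multilinear coincidences for multiple summing operators force the linear ones), whose proof in \cite{spp} is the same fixing-of-coordinates trick you wrote out. Your reading of (a) as the linear coincidence $\mathcal{L}(\ell_{1};F)=\Pi_{r;p}(\ell_{1};F)$ is the correct one; the printed version contains an obvious typo.
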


The connection between linear coincidence results with coincidence results for
multiple summing multilinear operators in indeed stronger:

\begin{theorem}
[Botelho, Pellegrino, 2009](\cite{botp})\label{teorema} Let $p,r\in
\lbrack1,q]$ and let $F$ be a Banach space. Let $B(p,q,r,F)$ denote the set of
all Banach spaces $E$ such that
\[
\mathcal{L}(E;F)=\Pi_{q;p}(E;F)\mathit{~and~}\mathcal{L}(E;\ell_{q}%
(F))=\Pi_{q;r}(E;\ell_{q}(F)).
\]
Then, for every $n\geq2$,
\[
\mathcal{L}(E_{1},\ldots,E_{n};F)=\mathcal{L}_{m(q;r,\ldots,r,p)}(E_{1}%
,\ldots,E_{n};F)
\]
whenever $E_{1},\ldots,E_{n}\in B(p,q,r,F)$.
\end{theorem}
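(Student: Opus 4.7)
My plan is to induct on $n\geq 2$ along the same lines as the sketched proof of Theorem~\ref{novoteorema}: at each level I peel off the first variable, package the remaining behaviour of $T$ as a single linear operator from $E_{1}$ into $\ell_{q}(F)$, and then exploit the two defining coincidences of $E_{1}\in B(p,q,r,F)$. The first coincidence $\mathcal{L}(E;F)=\Pi_{q;p}(E;F)$ is what handles the last slot (the one with weight $p$), and the second coincidence $\mathcal{L}(E;\ell_{q}(F))=\Pi_{q;r}(E;\ell_{q}(F))$ is precisely what converts the pointwise estimate on $T(\cdot,x_{j_{2}}^{(2)},\ldots,x_{j_{n}}^{(n)})$ into the desired $r$-summing inequality in the first slot. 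In both uses, the closed graph theorem upgrades each set-theoretic coincidence to a uniform norm inequality $\pi_{q;\cdot}(u)\leq M(E,F)\|u\|$, and this uniformity is what allows the induction to close with a single constant.

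For the base case $n=2$, I fix a finite sequence $(x_{j}^{(2)})_{j=1}^{m_{2}}$ in $E_{2}$ and define $S\colon E_{1}\to \ell_{q}(F)$ by $S(x)=(T(x,x_{j}^{(2)}))_{j=1}^{m_{2}}$. Applying $\mathcal{L}(E_{2};F)=\Pi_{q;p}(E_{2};F)$ to the linear map $T(x,\cdot)$ gives $\|S(x)\|_{\ell_{q}(F)}\leq M_{2}\|T\|\,\|x\|\,\|(x_{j}^{(2)})_{j=1}^{m_{2}}\|_{w,p}$, hence $\|S\|\leq M_{2}\|T\|\,\|(x_{j}^{(2)})_{j=1}^{m_{2}}\|_{w,p}$. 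Then $\mathcal{L}(E_{1};\ell_{q}(F))=\Pi_{q;r}(E_{1};\ell_{q}(F))$ makes $S$ be $(q;r)$-summing with $\pi_{q;r}(S)\leq M_{1}\|S\|$. Plugging in $(x_{i}^{(1)})_{i=1}^{m_{1}}$ and using $\|S(x_{i}^{(1)})\|_{\ell_{q}(F)}^{q}=\sum_{j}\|T(x_{i}^{(1)},x_{j}^{(2)})\|^{q}$ produces the multiple $(q;r,p)$-summing inequality with constant $M_{1}M_{2}\|T\|$.

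For the inductive step, I assume the statement at level $n-1$. Given $T\in\mathcal{L}(E_{1},\ldots,E_{n};F)$ and finite sequences $(x_{j}^{(k)})_{j=1}^{m_{k}}$ (weakly $r$-summable for $k\leq n-1$ and weakly $p$-summable for $k=n$), I apply the induction hypothesis to the $(n-1)$-linear map $T(x,\cdot,\ldots,\cdot)$, which is legitimate because $E_{2},\ldots,E_{n}\in B(p,q,r,F)$, and obtain
\[
\left(\sum_{j_{2},\ldots,j_{n}}\|T(x,x_{j_{2}}^{(2)},\ldots,x_{j_{n}}^{(n)})\|^{q}\right)^{1/q}\leq C\|T\|\,\|x\|\prod_{k=2}^{n-1}\|(x_{j}^{(k)})_{j=1}^{m_{k}}\|_{w,r}\,\|(x_{j}^{(n)})_{j=1}^{m_{n}}\|_{w,p}.
\]
Identifying $\{1,\ldots,m_{2}\}\times\cdots\times\{1,\ldots,m_{n}\}$ with a finite subset of $\mathbb{N}$, I define $S\colon E_{1}\to \ell_{q}(F)$ by $S(x)=(T(x,x_{j_{2}}^{(2)},\ldots,x_{j_{n}}^{(n)}))_{(j_{2},\ldots,j_{n})}$; the displayed inequality is precisely the bound $\|S\|\leq C\|T\|\prod_{k=2}^{n-1}\|(x_{j}^{(k)})\|_{w,r}\cdot\|(x_{j}^{(n)})\|_{w,p}$. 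The second clause of $E_{1}\in B(p,q,r,F)$ then makes $S$ be $(q;r)$-summing with $\pi_{q;r}(S)\leq M\|S\|$, and aggregating over $(x_{i}^{(1)})_{i=1}^{m_{1}}$ exactly as in the base case delivers the multiple $(q;r,\ldots,r,p)$-summing inequality for $T$.

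I expect the only real difficulty to be organisational rather than conceptual: one must check that the target space $\ell_{q}(F)$ of $S$ is genuinely the same $\ell_{q}(F)$ that appears in the hypothesis on $E_{1}$, which requires the canonical isometry between $\ell_{q}$-sums indexed by finite Cartesian products and $\ell_{q}(F)$ itself. Moreover, one must ensure that the closed graph constants $M_{1},M_{2},M$ depend only on the Banach spaces (and not on the chosen finite sequences or on $n$), so that repeated use across the induction collapses them into a single constant $C(E_{1},\ldots,E_{n},F,n)$, as demanded by the definition of multiple summability. Peeling the first variable (rather than the last, or a middle one) is forced by the pattern $(q;r,\ldots,r,p)$ of exponents, because only in this way does the $(n-1)$-linear induction hypothesis line up with its last slot having weight $p$.
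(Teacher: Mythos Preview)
Your proposal is correct and follows essentially the same route as the paper's proof: induction on $n$, peeling off the first variable by packaging $T(x,x_{j_{2}}^{(2)},\ldots,x_{j_{n}}^{(n)})$ as a linear map $E_{1}\to\ell_{q}(F)$, bounding its norm via the $(q;p)$-coincidence (for $n=2$) or the induction hypothesis (for $n\geq3$), and then invoking the $(q;r)$-coincidence $\mathcal{L}(E_{1};\ell_{q}(F))=\Pi_{q;r}(E_{1};\ell_{q}(F))$ to sum in the first slot. The paper phrases the uniformity of constants via the Open Mapping Theorem rather than the closed graph theorem and pads the finite multi-indexed tuple with zeros instead of appealing to an explicit reindexing isometry, but these are cosmetic differences only.
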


\begin{proof}
(Sketch) Induction on $n$. For the case $n=2,$ let $E_{1},E_{2}\in
B(p,q,r,F)$. By the Open Mapping Theorem there are constants $C_{1}$ and
$C_{2}$ such that
\[
\pi_{q;p}(u)\leq C_{1}\Vert u\Vert\mathrm{~for~every~}u\in\mathcal{L}%
(E_{2};F)\mathit{~and~}%
\]%
\[
\pi_{q;r}(v)\leq C_{2}\Vert v\Vert\mathrm{~for~every~}v\in\mathcal{L}%
(E_{1};\ell_{q}(F)).
\]
Let $A\in\mathcal{L}(E_{1},E_{2};F)$. Given two sequences $(x_{j}^{(1)}%
)_{j=1}^{\infty}\in\ell_{r}^{w}(E_{1})$ and $(x_{j}^{(2)})_{j=1}^{\infty}%
\in\ell_{p}^{w}(E_{2})$, fix $m\in\mathbb{N}$ and consider the continuous
linear operator
\[
A_{1}^{(m)}\colon E_{1}\longrightarrow\ell_{q}(F)~:~A_{1}^{(m)}(x)=(A(x,x_{1}%
^{(2)}),\ldots,A(x,x_{m}^{(2)}),0,0,\ldots).
\]
So, $A_{1}^{(m)}$ is $(q;r)$-summing and $\pi_{q;r}(A_{1}^{(m)})\leq
C_{2}\Vert A_{1}^{(m)}\Vert$. For each $x\in B_{E_{1}}$, consider the
continuous linear operator
\[
A_{x}\colon E_{2}\longrightarrow F~:~A_{x}(y)=A(x,y).
\]
So, $A_{x}$ is $(q;p)$-summing and $\pi_{q;p}(A_{x})\leq C_{1}\Vert A_{x}%
\Vert\leq C_{1}\Vert A\Vert\Vert x\Vert\leq C_{1}\Vert A\Vert$ and we can
obtain
\begin{equation}
\left(  \sum_{j=1}^{m}\sum_{k=1}^{m}\left\Vert A(x_{j}^{(1)},x_{k}%
^{(2)})\right\Vert ^{q}\right)  ^{\frac{1}{q}}\!\leq C_{1}C_{2}\Vert
A\Vert\left\Vert (x_{j}^{(1)})_{j=1}^{m}\right\Vert _{w,r}\left\Vert
(x_{k}^{(2)})_{j=1}^{m}\right\Vert _{w,p},\nonumber
\end{equation}
which shows that $A$ is multiple $(q;r,p)$-summing and $\pi_{q;r,p}(A)\leq
C_{1}C_{2}\Vert A\Vert$.\newline\indent Suppose now that the result holds for
$n$, that is: for every $E_{1},\ldots,E_{n}\in B(p,q,r,F)$, $\mathcal{L}%
(E_{1},\ldots,E_{n};F)=\mathcal{L}_{m(q;r,\ldots,r,p)}(E_{1},\ldots,E_{n};F)$.
To prove the case $n+1$, let $E_{1},\ldots,E_{n+1}\in B(p,q,r,F)$. Since
$E_{2},\ldots,E_{n+1}$ belong to $B(p,q,r,F)$, we have $\mathcal{L}%
(E_{2},\ldots,E_{n+1};F)=\mathcal{L}_{m(q;r,\ldots,r,p)}(E_{2},\ldots
,E_{n+1};F)$ by the induction hypotheses and hence there is a constant $C_{1}$
such that
\[
\pi_{q;r,\ldots,r,p}(B)\leq C_{1}\Vert B\Vert\mathrm{~for~every~}%
B\in\mathcal{L}(E_{2},\ldots,E_{n+1};F).
\]
Since $E_{1}\in B(p,q,r,F)$, there is a constant $C_{2}$ such that
\[
\pi_{q;r}(v)\leq C_{2}\Vert v\Vert\mathrm{~for~every~}v\in\mathcal{L}%
(E_{1};\ell_{q}(F)).
\]
Let $A\in\mathcal{L}(E_{1},\ldots,E_{n+1};F)$. Given sequences $(x_{j}%
^{(1)})_{j=1}^{\infty}\in\ell_{r}^{w}(E_{1}),\ldots,$ $(x_{j}^{(n)}%
)_{j=1}^{\infty}\in\ell_{r}^{w}(E_{n})$ and $(x_{j}^{(n+1)})_{j=1}^{\infty}%
\in\ell_{p}^{w}(E_{n+1})$, fix $m\in\mathbb{N}$ and consider the continuous
linear operator
\[
A_{1}^{(m)}\colon E_{1}\longrightarrow\ell_{q}(F)~:~A_{1}^{(m)}(x)=\left(
(A(x,x_{j_{2}}^{(2)},\ldots,x_{j_{n+1}}^{(n+1)}))_{j_{2},\ldots,j_{n+1}=1}%
^{m},0,0,\ldots\right)  .
\]
So, $A_{1}^{(m)}$ is $(q;r)$-summing and $\pi_{q;r}(A_{1}^{(m)})\leq
C_{2}\Vert A_{1}^{(m)}\Vert$. For each $x\in B_{E_{1}}$, consider the
continuous $n$-linear mapping
\[
A_{x}^{n}\colon E_{2}\times\cdots\times E_{n+1}\longrightarrow F~:~A_{x}%
^{n}(x_{2},\ldots,x_{n+1})=A(x,x_{2},\ldots,x_{n+1}).
\]
So,
\[
\pi_{q;r,\ldots,r,p}(A_{x}^{n})\leq C_{1}\Vert A_{x}^{n}\Vert\leq C_{1}\Vert
A\Vert\Vert x\Vert\leq C_{1}\Vert A\Vert
\]
and we conclude that
\[
\left(  \sum_{j_{1}=1}^{m}\cdots\sum_{j_{n+1}=1}^{m}\left\Vert A(x_{j_{1}%
}^{(1)},\ldots x_{j_{n+1}}^{(n+1)})\right\Vert ^{q}\right)  ^{\frac{1}{q}}\leq
C_{1}C_{2}\Vert A\Vert\left(  \prod_{k=1}^{n}\left\Vert (x_{j}^{(k)}%
)_{j=1}^{m}\right\Vert _{w,r}\right)  \left\Vert (x_{j}^{(n+1)})_{j=1}%
^{m}\right\Vert _{w,p}.
\]

\end{proof}

\begin{corollary}
[Souza, 2003, P\'{e}rez-Garc\'{\i}a, 2003 ](\textrm{\cite{bombal, pgtese,
souza})} If $F$ has cotype $q$ and $E_{1},\ldots,E_{n}$ are arbitrary Banach
spaces, then
\[
\mathcal{L}(E_{1},\ldots,E_{n};F)=\mathcal{L}_{m}{}_{(q;1)}(E_{1},\ldots
,E_{n};F)\mathit{~and~}%
\]%
\[
\pi_{q;1}(A)\leq C_{q}(F)^{n}\Vert A\Vert\mathit{~for~every~}A\in
\mathcal{L}(E_{1},\ldots,E_{n};F),
\]
where $C_{q}(F)$ is the cotype $q$ constant of $F$.
\end{corollary}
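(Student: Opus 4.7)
The plan is to reduce the corollary to \thmref{teorema} with parameters $p = r = 1$. The essential point to check is that every Banach space $E$ belongs to the class $B(1,q,1,F)$, i.e., that both $\mathcal{L}(E;F) = \Pi_{q;1}(E;F)$ and $\mathcal{L}(E;\ell_{q}(F)) = \Pi_{q;1}(E;\ell_{q}(F))$ hold whenever $F$ has cotype $q$.

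The first equality is an immediate consequence of the cotype-$q$ hypothesis, exactly as in the easy part of the Maurey--Pisier + Talagrand theorem recalled earlier in the paper: for $u \in \mathcal{L}(E;F)$ and $x_{1},\ldots,x_{m} \in E$,
\[
\Bigl(\sum_{j=1}^{m} \|u(x_{j})\|^{q}\Bigr)^{1/q} \leq C_{q}(F) \Bigl(\int_{0}^{1} \Bigl\|\sum_{j=1}^{m} r_{j}(t) u(x_{j})\Bigr\|^{2} dt\Bigr)^{1/2} \leq C_{q}(F)\, \|u\|\, \|(x_{j})_{j=1}^{m}\|_{w,1},
\]
which yields $\pi_{q;1}(u) \leq C_{q}(F) \|u\|$. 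For the second equality I would invoke the classical fact that $\ell_{q}$-sums of a cotype-$q$ space again have cotype $q$ with the same constant, so that $C_{q}(\ell_{q}(F)) = C_{q}(F)$; repeating the estimate above with $\ell_{q}(F)$ in place of $F$ then gives $\pi_{q;1}(v) \leq C_{q}(F) \|v\|$ for every $v \in \mathcal{L}(E;\ell_{q}(F))$.

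With both conditions verified, \thmref{teorema} applies to any tuple $E_{1},\ldots,E_{n}$ of Banach spaces and delivers
\[
\mathcal{L}(E_{1},\ldots,E_{n};F) = \mathcal{L}_{m(q;1,\ldots,1)}(E_{1},\ldots,E_{n};F) = \mathcal{L}_{m(q;1)}(E_{1},\ldots,E_{n};F).
\]
To obtain the sharp bound $\pi_{q;1}(A) \leq C_{q}(F)^{n} \|A\|$, I would trace the induction inside the proof of \thmref{teorema}: the base case $n = 2$ produces $\pi_{q;1,1}(A) \leq C_{1} C_{2} \|A\|$ with both $C_{1}$ and $C_{2}$ bounded above by $C_{q}(F)$, and each inductive step from $n$ to $n+1$ multiplies by exactly one further factor of $C_{q}(F)$, namely the $(q;1)$-summing norm of a linear operator into $\ell_{q}(F)$.

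No serious obstacle is anticipated; the only non-formal point is the inheritance of cotype $q$ by $\ell_{q}(F)$, which is classical and follows from applying Fubini inside the defining cotype inequality.
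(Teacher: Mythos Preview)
Your proposal is correct and follows exactly the paper's approach: invoke Theorem~\ref{teorema} with $p=r=1$, after checking that both $F$ and $\ell_q(F)$ have cotype $q$ so that every Banach space lies in $B(1,q,1,F)$; the paper simply cites \cite[Theorem~11.12]{djt} for the cotype of $\ell_q(F)$ and \cite[Corollary~11.17]{djt} for the $(q;1)$-summing coincidence. Your explicit tracking of the constant $C_q(F)^n$ through the induction is a useful elaboration that the paper leaves implicit.
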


\begin{proof}
Both $F$ and $\ell_{q}(F)$ have cotype $q$ (see \cite[Theorem 11.12]{djt}), so
$\mathcal{L}(E;F)=\Pi_{q;1}(E;F)$ and $\mathcal{L}(E;\ell_{q}(F))=\Pi
_{q;1}(E;\ell_{q}(F))$ for every Banach space $E$ by \cite[Corollary
11.17]{djt}.
\end{proof}

\begin{corollary}
[P\'{e}rez-Garc\'{\i}a, 2003](\textrm{\cite{bombal, pgtese})} If $E_{1}%
,\ldots,E_{n}$ are $\mathcal{L}_{1}$-spaces and $H$ is a Hilbert space, then
\[
\mathcal{L}(E_{1},\ldots,E_{n};H)=\mathcal{L}_{m,2}(E_{1},\ldots
,E_{n};H)\mathit{~and~}%
\]%
\[
\pi_{2}(A)\leq K_{G}^{n}\Vert A\Vert\mathit{~for~every~}A\in\mathcal{L}%
(E_{1},\ldots,E_{n};H),
\]
where $K_{G}$ stands for the Grothendieck constant.
\end{corollary}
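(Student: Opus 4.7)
The plan is to deduce this corollary directly from Theorem \ref{teorema} by verifying that every $\mathcal{L}_1$-space lies in $B(2,2,2,H)$, and then to chase constants through the induction in that theorem to sharpen the bound to $K_G^n$.

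First I would recall the classical Grothendieck theorem (essentially the $\mathcal{L}_1$-space version of Theorem \ref{kyp}): if $E$ is an $\mathcal{L}_1$-space and $K$ is any Hilbert space, then every $u \in \mathcal{L}(E; K)$ is absolutely $1$-summing, with the sharp bound $\pi_1(u) \leq K_G \|u\|$. Combining this with the Inclusion Theorem, which gives $\pi_2(u) \leq \pi_1(u)$, we obtain
\[
\pi_2(u) \leq K_G \|u\|
\quad \text{for every } u \in \mathcal{L}(E; K).
\]
I now apply this observation twice, taking once $K = H$ and once $K = \ell_2(H)$ (which is again a Hilbert space, being an $\ell_2$-sum of Hilbert spaces). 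This shows that every $\mathcal{L}_1$-space $E$ satisfies both $\mathcal{L}(E;H) = \Pi_{2;2}(E;H)$ and $\mathcal{L}(E;\ell_2(H)) = \Pi_{2;2}(E;\ell_2(H))$, i.e.\ $E \in B(2,2,2,H)$ in the notation of Theorem \ref{teorema}.

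With this verification in hand, I would invoke Theorem \ref{teorema} with the parameters $q = p = r = 2$ and $F = H$ to conclude
\[
\mathcal{L}(E_1, \ldots, E_n; H) = \mathcal{L}_{m(2;2,\ldots,2)}(E_1, \ldots, E_n; H) = \mathcal{L}_{m,2}(E_1, \ldots, E_n; H).
\]

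The only remaining point, which I expect to be the technical heart of the argument, is the sharp norm estimate $\pi_2(A) \leq K_G^n \|A\|$. The proof of Theorem \ref{teorema} proceeds by induction on $n$ and at each step produces a bound of the form $C_1 C_2 \|A\|$, where $C_1$ controls an $(n-1)$-linear piece (by the induction hypothesis) and $C_2$ controls a linear map from $E_1$ into $\ell_2(H)$. In our setting, Grothendieck's theorem gives $C_2 = K_G$ at every step, and in the base case $n=2$ both constants equal $K_G$, so that $\pi_2(A) \leq K_G^2 \|A\|$. A straightforward induction, tracking the constants exactly as in the proof of Theorem \ref{teorema}, then yields $\pi_2(A) \leq K_G^n \|A\|$ at every level, completing the proof.
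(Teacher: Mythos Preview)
Your proposal is correct and follows essentially the same route as the paper: both verify that every $\mathcal{L}_1$-space lies in $B(2,2,2,H)$ via Grothendieck's theorem (the paper phrases this through the fact that $H$ and $\ell_2(H)$ are $\mathcal{L}_2$-spaces and then invokes \cite[Theorems 3.1 and 2.8]{djt}, which is exactly Grothendieck's theorem plus the Inclusion Theorem), and then apply Theorem \ref{teorema}. Your explicit tracking of the constants through the induction to obtain $K_G^n$ is a welcome detail that the paper leaves implicit.
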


\begin{proof}
From \cite[Ex. 23.17(a)]{DF} we know that $H$ and $\ell_{2}(H)$ are
$\mathcal{L}_{2}$-spaces, so $\mathcal{L}(E;H)=\Pi_{2;2}(E;H)$ and
$\mathcal{L}(E;\ell_{2}(H))=\Pi_{2;2}(E;\ell_{2}(H))$ for every $\mathcal{L}%
_{1}$-space $E$ by \cite[Theorems 3.1 and 2.8]{djt}.
\end{proof}

\begin{corollary}
[P\'{e}rez-Garc\'{\i}a, 2003](\textrm{\cite{bombal, pgtese})} If $F$ has
cotype $2$ and $E_{1},\ldots,E_{n}$ are $\mathcal{L}_{\infty}$-spaces, then
$\mathcal{L}(E_{1},\ldots,E_{n};F)=\mathcal{L}_{m,2}(E_{1},\ldots,E_{n};F).$
\end{corollary}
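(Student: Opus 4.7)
The plan is to deduce this as a direct application of Theorem~\ref{teorema} with $p=q=r=2$, exactly in the spirit of the two preceding corollaries. With these parameter choices, Theorem~\ref{teorema} yields the desired conclusion $\mathcal{L}(E_1,\ldots,E_n;F)=\mathcal{L}_{m(2;2,\ldots,2)}(E_1,\ldots,E_n;F)=\mathcal{L}_{m,2}(E_1,\ldots,E_n;F)$ provided one can verify that every $\mathcal{L}_\infty$-space $E$ belongs to $B(2,2,2,F)$, i.e., that
\[
\mathcal{L}(E;F)=\Pi_{2;2}(E;F)\quad\text{and}\quad \mathcal{L}(E;\ell_2(F))=\Pi_{2;2}(E;\ell_2(F)).
\]

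The first step I would do is to check the stability of cotype under $\ell_2$-sums: since $F$ has cotype $2$, the Banach space $\ell_2(F)$ also has cotype $2$. This is the content of \cite[Theorem 11.12]{djt}, the same ingredient exploited in the cotype-$q$ corollary stated just above.

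The crucial linear ingredient I would then invoke is the classical coincidence result that every continuous linear operator from an $\mathcal{L}_\infty$-space to a Banach space of cotype $2$ is absolutely $2$-summing. This is a well-known consequence of Grothendieck's Theorem combined with Maurey's extrapolation / Pisier factorization; the relevant references are \cite[Theorems 3.1 and 11.14]{djt}. Applying this statement once with target $F$ and once with target $\ell_2(F)$ produces the two equalities above for \emph{every} $\mathcal{L}_\infty$-space $E$, which places each $E_j$ in $B(2,2,2,F)$ and lets Theorem~\ref{teorema} finish the job.

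There is no genuine obstacle in this argument: the heavy lifting has already been carried out in Theorem~\ref{teorema}, and the task reduces to selecting the right linear coincidence result to feed into it. The proof mirrors the template of the immediately preceding corollary (for $\mathcal{L}_1$-spaces with Hilbert target), with \textquotedblleft$\mathcal{L}_1$-space\textquotedblright\ replaced by \textquotedblleft$\mathcal{L}_\infty$-space\textquotedblright\ and \textquotedblleft Hilbert target\textquotedblright\ replaced by \textquotedblleft cotype $2$ target\textquotedblright; the only substantive check is the persistence of cotype $2$ on passing from $F$ to $\ell_2(F)$, which \cite{djt} supplies directly.
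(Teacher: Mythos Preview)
Your proposal is correct and follows essentially the same route as the paper: apply Theorem~\ref{teorema} with $p=q=r=2$, use \cite[Theorem~11.12]{djt} to pass cotype~$2$ from $F$ to $\ell_2(F)$, and invoke \cite[Theorem~11.14(a)]{djt} for the linear coincidence $\mathcal{L}(E;G)=\Pi_{2}(E;G)$ whenever $E$ is an $\mathcal{L}_\infty$-space and $G$ has cotype~$2$. The only cosmetic difference is that the paper cites \cite[Theorem~11.14(a)]{djt} alone rather than pairing it with Theorem~3.1.
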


\begin{proof}
From \cite[Theorem 11.12]{djt} we know that $F$ and $\ell_{2}(F)$ have cotype
$2$, so $\mathcal{L}(E;F)=\Pi_{2;2}(E;F)$ and $\mathcal{L}(E;\ell_{2}%
(F))=\Pi_{2;2}(E;\ell_{2}(F))$ for every $\mathcal{L}_{\infty}$-space $E$ by
\cite[Theorem 11.14(a)]{djt}.
\end{proof}

By invoking \cite[Theorem 11.14(b)]{djt} instead of \cite[Theorem
11.14(a)]{djt} we get:

\begin{corollary}
If $F$ has cotype $q>2$, $E_{1},\ldots,E_{n}$ are $\mathcal{L}_{\infty}%
$-spaces and $r<q$, then $\mathcal{L}(E_{1},\ldots,E_{n};F)=\mathcal{L}%
_{m(q,r)}(E_{1},\ldots,E_{n};F).$
\end{corollary}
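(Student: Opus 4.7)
The plan is to mirror exactly the proofs of the preceding three corollaries, simply swapping the linear coincidence input. The heavy lifting (induction on $n$, the $\ell_q(F)$-valued auxiliary operator, passing from $B(p,q,r,F)$ to multiple summability) is already done inside Theorem \ref{teorema}, so essentially nothing new is required at the multilinear level: one only has to feed the right linear coincidence theorem into that machine.

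Concretely, I would apply Theorem \ref{teorema} with $p=r$, so that the target exponents $(q;r,\ldots,r,p)=(q;r,\ldots,r)$ are precisely those needed to conclude that every $A\in\mathcal{L}(E_1,\ldots,E_n;F)$ is multiple $(q,r)$-summing. The hypothesis of Theorem \ref{teorema} reduces to verifying that every $\mathcal{L}_\infty$-space $E$ lies in $B(r,q,r,F)$, i.e. both
\[
\mathcal{L}(E;F)=\Pi_{q;r}(E;F)\quad\text{and}\quad\mathcal{L}(E;\ell_{q}(F))=\Pi_{q;r}(E;\ell_{q}(F)).
\]
The first equality is exactly [Theorem 11.14(b)]{djt} applied to $F$, which asserts that for a space with cotype $q>2$, every operator from an $\mathcal{L}_\infty$-space is $(q,r)$-summing whenever $r<q$. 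The second equality follows by applying the same theorem to $\ell_q(F)$, which has cotype $q$ by the standard stability result [Theorem 11.12]{djt}.

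Once both linear coincidences are in hand, Theorem \ref{teorema} delivers the conclusion directly. There is no genuine obstacle here; the only point that must be checked with care is that \emph{the same} $r<q$ may be used simultaneously as the first ``weak'' exponent and as the last one $p$, so that the Botelho--Pellegrino theorem outputs $\mathcal{L}_{m(q;r,\ldots,r)}=\mathcal{L}_{m(q,r)}$ rather than a mixed-exponent class. This is automatic once one sets $p=r$ in the hypothesis, and the cotype-$q$ assumption on $F$ is exactly what ensures the linear inputs hold for every $\mathcal{L}_\infty$-domain, uniformly in the choice of $r<q$.
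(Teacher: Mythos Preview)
Your proposal is correct and follows exactly the approach the paper intends: it applies Theorem~\ref{teorema} with $p=r$, using \cite[Theorem 11.14(b)]{djt} for the linear coincidences on $F$ and on $\ell_q(F)$ (the latter having cotype $q$ by \cite[Theorem 11.12]{djt}). This is precisely what the paper means by ``invoking \cite[Theorem 11.14(b)]{djt} instead of \cite[Theorem 11.14(a)]{djt}'' in the line preceding the corollary.
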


Very recently, in a remarkable paper \cite{Def2}, A. Defant, D. Popa and U.
Schwarting introduced the notion of coordinatewise multiple summing operators
and, among various interesting results, presented the following vector-valued
generalization of Bohnenblust-Hille inequality. Below, a multilinear map $U$
$\in\mathcal{L}(^{n}E_{1},...,E_{n};F)$ is separately $(r,1)$-summing if it is
absolutely $(r,1)$-summing in each coordinate separately.

\begin{theorem}
[Defant, Popa, Schwarting, 2010]Let $F$ be a Banach space with cotype $q$, and
$1\leq r<q$. Then each separately $(r,1)$-summing $U$ $\in\mathcal{L}%
(E_{1},...,E_{n};F)$ is multiple $(\frac{qrn}{q+(n-1)r},1)$-summing.
\end{theorem}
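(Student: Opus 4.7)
The plan is to combine three ingredients: a uniform version of the separate $(r,1)$-summing hypothesis, the automatic multiple $(q,1)$-summing that cotype $q$ of $F$ supplies via the Souza--P\'erez-Garc\'ia corollary, and a Blei-type multi-indexed mixed-norm interpolation inequality. Fix finite sequences $(x_j^{(k)})_{j=1}^{m_k}$ in $E_k$, set $a_{j_1,\ldots,j_n} := \|U(x_{j_1}^{(1)},\ldots,x_{j_n}^{(n)})\|$, and write $s := qrn/(q+(n-1)r)$, so that $\tfrac{1}{s} = \tfrac{1}{nr} + \tfrac{n-1}{nq}$. The goal is to bound $\bigl(\sum a^s\bigr)^{1/s}$ by a constant multiple of $\prod_k \|(x_j^{(k)})\|_{w,1}$.

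First, a standard closed graph argument promotes separate $(r,1)$-summing to the uniform statement that there exists $K < \infty$ depending only on $U$ such that for every coordinate $i$ and every choice of $x_j \in E_j$ $(j \neq i)$,
\[ \pi_{r,1}\bigl(U(x_1, \ldots, x_{i-1}, \cdot, x_{i+1}, \ldots, x_n)\bigr) \leq K \prod_{j \neq i} \|x_j\|. \]
Next, for each coordinate $i$, I would introduce the $(n-1)$-linear auxiliary map
\[ W_i \colon \prod_{k \neq i} E_k \to \ell_r(F), \qquad W_i\bigl((y_k)_{k \neq i}\bigr) := \bigl(U(y_1, \ldots, y_{i-1}, x_{j_i}^{(i)}, y_{i+1}, \ldots, y_n)\bigr)_{j_i = 1}^{m_i}, \]
whose operator norm, by the uniform bound above, satisfies $\|W_i\| \leq K \|(x_j^{(i)})\|_{w,1}$. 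Since $r \leq q$ and $F$ has cotype $q$, the Bochner space $\ell_r(F)$ inherits cotype $q$, and the Souza--P\'erez-Garc\'ia corollary applied to $W_i$ gives that $W_i$ is multiple $(q,1)$-summing with constant at most $C_q(\ell_r(F))^{n-1} \|W_i\|$. Unwinding the $\ell_r(F)$-norm of $W_i$ then produces, for each $i \in \{1,\ldots,n\}$, the mixed-norm inequality
\[ \biggl(\sum_{(j_k)_{k \neq i}} \Bigl(\sum_{j_i} a_{j_1,\ldots,j_n}^{\,r}\Bigr)^{q/r}\biggr)^{1/q} \leq C \prod_{k=1}^{n} \|(x_j^{(k)})\|_{w,1}, \]
with $C$ depending only on $K$, $n$, $q$, $r$ and the cotype constant of $F$.

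The last step is to combine these $n$ mixed-norm estimates through a Blei-type interpolation inequality: for every non-negative scalar array $(b_{j_1,\ldots,j_n})$,
\[ \biggl(\sum_{j_1,\ldots,j_n} b^s\biggr)^{1/s} \leq \prod_{i=1}^{n} \biggl(\sum_{(j_k)_{k \neq i}} \Bigl(\sum_{j_i} b^r\Bigr)^{q/r}\biggr)^{1/(nq)}. \]
The exponent identity $\tfrac{1}{s} = \tfrac{1}{nr} + \tfrac{n-1}{nq}$ is exactly the relation that makes this inequality tight (equality holds on the all-ones array). Applying it to $b_{j_1,\ldots,j_n} := a_{j_1,\ldots,j_n}$ and substituting the bound from the previous step yields at once the desired multiple $(s,1)$-summing estimate with constant at most $C$.

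The main obstacle I expect is the Blei-type inequality for general $n$. For $n = 2$ it is an elementary double H\"older, splitting $b^s = b^{s/2}\cdot b^{s/2}$ and tuning conjugate exponents so that one factor becomes a $\sum b^r$ and the other a $\sum b^q$ on each axis, with Minkowski's integral inequality (valid because $r \leq q$) swapping outer and inner norms when needed. For general $n$, the iteration must be orchestrated symmetrically across all $n$ coordinates so that every factor $\|b\|_{\ell_q(\ell_r)}$ enters with weight exactly $1/n$; organizing this bookkeeping of exponents, and routing each step through Minkowski's inequality so as to land on precisely the mixed norms that step two controls, is the delicate part of the argument.
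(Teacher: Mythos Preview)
The survey you are working from does not actually prove this theorem: it merely quotes the result from \cite{Def2} and then observes that the case $F=\mathbb{K}$, $q=2$, $r=1$ recovers Bohnenblust--Hille. So there is no ``paper's own proof'' to compare against here.

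That said, your outline is correct and is essentially the argument given in the original Defant--Popa--Schwarting paper. The three ingredients you identify---the uniform separate $(r,1)$-bound (obtained there, as you suggest, by a closed-graph/Banach--Steinhaus argument), the passage to the auxiliary $(n-1)$-linear map $W_i$ into $\ell_r(F)$ combined with the fact that $\ell_r(F)$ inherits cotype $q$ for $r\le q$ so that the Souza--P\'erez-Garc\'ia coincidence applies, and finally the Blei-type mixed-norm interpolation inequality with the exponent relation $\tfrac{1}{s}=\tfrac{1}{nr}+\tfrac{n-1}{nq}$---are exactly the steps in \cite{Def2}. Your diagnosis of where the real work lies is also accurate: the multi-index Blei inequality is the technical heart, and it is proved in \cite{Def2} precisely by the iterated H\"older/Minkowski scheme you describe (there it appears as a lemma on mixed $\ell_r$--$\ell_q$ norms of scalar arrays). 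Nothing is missing from your plan.
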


Using $F=\mathbb{K}$, $q=2$ and $r=1$ in the above theorem, Bohnenblust-Hille
Theorem is recovered.

A last comment about the richness of applications of the class of absolutely
summing multilinear operators is related to tensor norms.

A. Defant and D. P\'{e}rez-Garc\'{\i}a \cite{andreas david} constructed an
$n$-tensor norm, in the sense of Grothendieck (associated to the class of
multiple $1$-summing multilinear forms) possessing the surprising property
that the $\alpha$-tensor product $\alpha(Y_{1},...,Y_{n})$ has local
unconditional structure for each choice of $n$ arbitrary $\mathcal{L}_{p_{j}}%
$-spaces $Y_{j}.$ This construction answers a question posed by J. Diestel. It
is interesting to mention that in \cite{32} it is shown that none of
Grothendieck's $14$ norms satisfies such condition.

\section{Other attempts of multi-ideals related to absolutely summing
operators: an overview}

In the last decade several classes of multilinear maps have been investigated
as extensions of the linear concept of absolutely summing operators (for works
comparing these different classes we refer to \cite{CD, davidarchiv}).
Depending on the properties that a given class possesses, this class is
usually compared with the original linear ideal and, in some sense, qualified
as a good (or bad) extension of the linear ideal. In this direction, the
ideals of dominated multilinear operators and multiple summing multilinear
operators are mostly classified as nice generalizations of absolutely summing
linear operators. Of course, the evaluation of what properties are important
or not has a subjective component, but some classical properties of absolutely
summing operators are naturally expected to hold in the context of a
reasonable multilinear generalization.

The usual procedure in the multilinear and polynomial theory of absolutely
summing operators is to define a class and study their properties. The final
sections of this paper have a different purpose; we elect some properties that
we consider fundamental and investigate which classes satisfy them (specially
if there exist maximal and minimal classes, in a sense that will be clear soon).

Below we sketch an overview of the different multilinear approaches to
summability of operators which have arisen in the last years:

\subsection{Dominated multilinear operators: the first attempt}

If $p\geq1,$ $T\in\mathcal{L}(E_{1},...,E_{n};F)$ is said to be $p$-dominated
$(T\in\mathcal{L}_{d,p}(E_{1},...,E_{n};F))$ if $\left(  T(x_{j}^{1}%
,...,x_{j}^{n})\right)  _{j=1}^{\infty}\in\ell_{p/n}(F)$ whenever $(x_{j}%
^{k})_{j=1}^{\infty}\in\ell_{p}^{w}(E_{k}).$ This concept was essentially
introduced by Pietsch and explored in \cite{AlencarMatos, anais, sch} and has
strong similarity with the original linear ideal of absolutely summing
operators; during some time (before the emergence of the class if multiple
summing multilinear operators) this ideal seemed to be considered as the most
promising multilinear approach to summability (however, as it will be clear
soon, this class is, is some sense, too small). The terminology
\textquotedblleft$p$-dominated\textquotedblright\ is justified by the
Pietsch-Domination type theorem (a detailed proof can be found in
\cite{tesina} or as a consequence of a more general result \cite{pss}):

\begin{theorem}
[Pietsch, Geiss, 1985](\cite{geisse}) $T\in\mathcal{L}(E_{1},...,E_{n};F)$ is
$p$-dominated if and only if there exist $C\geq0$ and regular probability
measures $\mu_{j}$ on the Borel $\sigma$-algebras of $B_{E_{j}^{^{\prime}}}$
endowed with the weak star topologies such that
\[
\left\Vert T\left(  x_{1},...,x_{n}\right)  \right\Vert \leq C\prod
\limits_{j=1}^{n}\left(  \int_{B_{E_{j}^{\prime}}}\left\vert \varphi\left(
x_{j}\right)  \right\vert ^{p}d\mu_{j}\left(  \varphi\right)  \right)  ^{1/p}%
\]
for every $x_{j}\in E_{j}$ and $j=1,...,n$.
\end{theorem}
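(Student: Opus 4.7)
The plan is to adapt the Ky Fan Lemma argument sketched after \thmref{ppk} for the linear Pietsch Domination Theorem, working now on the compact convex set $\mathcal{K}=\prod_{k=1}^{n}P(B_{E_k^{\ast}})$, where each factor $P(B_{E_k^{\ast}})$ sits in $C(B_{E_k^{\ast}})^{\ast}$ with its weak-star topology. The sufficient direction is routine: raising the putative inequality to the $p/n$-th power, summing over $j$, applying H\"older in the form $\sum_{j}\prod_{k}a_j^{(k),1/n}\leq\prod_{k}(\sum_{j}a_j^{(k)})^{1/n}$, swapping integral and sum, and invoking the definition of $\|\cdot\|_{w,p}$ gives the $p$-dominated estimate after raising to $n/p$.

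For the necessary direction, the closed graph theorem produces $C\geq 0$ with
\[
\Bigl(\sum_{j=1}^{m}\|T(x_j^{(1)},\ldots,x_j^{(n)})\|^{p/n}\Bigr)^{n/p}\leq C\prod_{k=1}^{n}\|(x_j^{(k)})_{j=1}^{m}\|_{w,p}
\]
for all finite sequences. I would then define, for each finite family $A=((x_j^{(1)},\ldots,x_j^{(n)}))_{j=1}^{m}$,
\[
g_{A}(\mu_{1},\ldots,\mu_{n})=\sum_{j=1}^{m}\Bigl[\|T(x_j^{(1)},\ldots,x_j^{(n)})\|^{p/n}-C^{p/n}\prod_{k=1}^{n}\Bigl(\int_{B_{E_k^{\ast}}}|\varphi(x_j^{(k)})|^{p}d\mu_k(\varphi)\Bigr)^{1/n}\Bigr],
\]
and let $\mathcal{F}$ be the collection of all such $g_{A}$. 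Each $g_{A}$ is weak-star continuous and convex on $\mathcal{K}$: the subtracted term is a geometric mean of affine functions of the $\mu_k$, hence concave, so its negative is convex. Using the $n$-linearity of $T$, rescaling all vectors of $A$ by $\lambda^{1/p}$ scales $g_{A}$ by $\lambda$; thus concatenating $A$ rescaled by $\lambda^{1/p}$ with $B$ rescaled by $(1-\lambda)^{1/p}$ realizes $\lambda g_A+(1-\lambda)g_B$ as an element of $\mathcal{F}$, giving concavity of $\mathcal{F}$ in the Ky Fan sense.

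The hard part is the remaining Ky Fan hypothesis: for every $g_A\in\mathcal{F}$ there must exist a point of $\mathcal{K}$ where $g_A\leq 0$. In the linear case the Dirac mass at a functional attaining $\|(x_j)\|_{w,p}$ works, but here that choice places the subtracted product term on the \emph{wrong} side of the H\"older inequality $\sum_{j}\prod_{k}b_{jk}^{1/n}\leq\prod_{k}(\sum_{j}b_{jk})^{1/n}$, so the $p$-dominating inequality does not directly supply such a point. I would resolve this either by (a) linearizing $T$ to $\widetilde{T}\colon E_{1}\widehat{\otimes}_{\pi}\cdots\widehat{\otimes}_{\pi}E_{n}\to F$, applying \thmref{ppk} to $\widetilde{T}$, and pulling back the resulting measure along the weak-star continuous map $(\varphi_{1},\ldots,\varphi_{n})\mapsto\varphi_{1}\otimes\cdots\otimes\varphi_{n}$ from $\prod_{k}B_{E_k^{\ast}}$ into the unit ball of $(E_{1}\widehat{\otimes}_{\pi}\cdots\widehat{\otimes}_{\pi}E_{n})^{\ast}$, or by (b) a direct Hahn-Banach separation in $C(B_{E_{1}^{\ast}})\oplus\cdots\oplus C(B_{E_{n}^{\ast}})$ of the convex cone generated by the expressions appearing in $g_A$. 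Once this step is secured, Ky Fan's Lemma produces a common $(\mu_{1},\ldots,\mu_{n})\in\mathcal{K}$ making $g_A(\mu_{1},\ldots,\mu_{n})\leq 0$ for every $A$; specializing to singletons $A=\{(x_{1},\ldots,x_{n})\}$ and taking $n/p$-th roots delivers the claimed pointwise integral inequality.
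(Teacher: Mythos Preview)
The paper does not supply a proof of this theorem; it merely refers to \cite{tesina} and \cite{pss}. So there is no ``paper's proof'' to match, and your proposal must stand on its own.

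Your sufficiency argument is fine, and you have correctly isolated the real obstruction in the necessity direction: with the \emph{geometric-mean} functional
\[
g_{A}(\mu_{1},\dots,\mu_{n})=\sum_{j}\Bigl[\|T(x_j^{(1)},\dots,x_j^{(n)})\|^{p/n}-C^{p/n}\prod_{k}\Bigl(\int|\varphi(x_j^{(k)})|^{p}\,d\mu_k\Bigr)^{1/n}\Bigr],
\]
Dirac masses do not obviously give a point where $g_A\le 0$, because H\"older/AM--GM bounds the subtracted term from \emph{above}, not below. The problem is that neither of your two proposed fixes closes this gap. Route~(a) breaks in two places: first, $p$-domination of $T$ gives information only about elementary-tensor sequences and does not make the linearization $\widetilde{T}$ absolutely $p$-summing on $E_1\widehat{\otimes}_\pi\cdots\widehat{\otimes}_\pi E_n$; second, even if it did, the linear Pietsch measure would live on $B_{(E_1\widehat{\otimes}_\pi\cdots)^{\ast}}$, and the map $(\varphi_1,\dots,\varphi_n)\mapsto\varphi_1\otimes\cdots\otimes\varphi_n$ goes \emph{into} that ball, so you cannot ``pull back'' to obtain product measures $\mu_1,\dots,\mu_n$. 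Route~(b) is only a gesture, not an argument.

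The standard repair is to replace the geometric mean by the arithmetic mean in the test functions: set
\[
h_{A}(\mu_{1},\dots,\mu_{n})=\sum_{j}\Bigl[\|T(x_j^{(1)},\dots,x_j^{(n)})\|^{p/n}-\frac{C^{p/n}}{n}\sum_{k=1}^{n}\int|\varphi(x_j^{(k)})|^{p}\,d\mu_k\Bigr].
\]
Now each $h_A$ is \emph{affine} in $(\mu_1,\dots,\mu_n)$; the family is still concave in the Ky Fan sense via your rescaling/concatenation trick; and the non-positive point is immediate: choose Dirac masses $\delta_{\varphi_k}$ at maximizing functionals, use the $p$-dominated inequality, and apply AM--GM in the form $\prod_k a_k^{1/n}\le\frac{1}{n}\sum_k a_k$ with $a_k=\|(x_j^{(k)})_j\|_{w,p}^{p}$. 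Ky Fan then yields common $(\mu_1,\dots,\mu_n)$ with $\|T(x_1,\dots,x_n)\|^{p/n}\le\frac{C^{p/n}}{n}\sum_k\int|\varphi(x_k)|^p\,d\mu_k$ for all $(x_1,\dots,x_n)$. Finally, replace $x_k$ by $\lambda_k x_k$ with $\prod_k\lambda_k=1$ and minimize the right side over such $\lambda_k>0$; the infimum is exactly $C^{p/n}\prod_k(\int|\varphi(x_k)|^p\,d\mu_k)^{1/n}$, which after taking $n/p$-th powers is the desired product domination.
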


\begin{corollary}
If $1\leq p\leq q<\infty$, then $\mathcal{L}_{d,p}\subset\mathcal{L}_{d,q}.$
\end{corollary}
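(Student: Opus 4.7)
The natural route is through the Pietsch–Geiss Domination Theorem stated just above, which transfers the problem from the sequential definition (where the inclusion is not transparent, since $\ell_p^w(E) \subset \ell_q^w(E)$ goes in the wrong direction for a direct argument) to an integral formulation where the classical $L^p \subset L^q$ embedding for probability measures applies coordinate by coordinate.

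Concretely, given $T \in \mathcal{L}_{d,p}(E_1,\ldots,E_n;F)$, I would first invoke the Pietsch–Geiss theorem to obtain a constant $C \geq 0$ and regular probability measures $\mu_1,\ldots,\mu_n$ on the weak-star duals $B_{E_j^{\ast}}$ such that
\[
\|T(x_1,\ldots,x_n)\| \leq C \prod_{j=1}^{n}\left(\int_{B_{E_j^{\ast}}} |\varphi(x_j)|^{p}\,d\mu_j(\varphi)\right)^{1/p}
\]
for all $x_j \in E_j$. The core observation is then that, since each $\mu_j$ is a probability measure and $1 \leq p \leq q < \infty$, Jensen's inequality (equivalently, Hölder with exponents $q/p$ and $(q/p)^{\ast}$) yields, for every bounded measurable $f$,
\[
\left(\int |f|^{p}\,d\mu_j\right)^{1/p} \leq \left(\int |f|^{q}\,d\mu_j\right)^{1/q}.
\]

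Applying this to $f(\varphi)=\varphi(x_j)$ in each factor of the product gives
\[
\|T(x_1,\ldots,x_n)\| \leq C \prod_{j=1}^{n}\left(\int_{B_{E_j^{\ast}}} |\varphi(x_j)|^{q}\,d\mu_j(\varphi)\right)^{1/q}
\]
with the same measures and the same constant $C$. The converse direction of the Pietsch–Geiss theorem then immediately gives $T \in \mathcal{L}_{d,q}(E_1,\ldots,E_n;F)$, which is the desired inclusion.

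There is essentially no obstacle here: the only nontrivial ingredient is the domination theorem itself (already proved in the text), while the passage from $p$ to $q$ is the standard monotonicity of $L^r$-norms on a probability space. It is worth noting that trying to argue directly from the sequence definition would be awkward precisely because weak-$\ell_r$ classes shrink as $r$ grows, so the domination-theoretic viewpoint is what makes the inclusion transparent.
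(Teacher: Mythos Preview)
Your argument is correct and is exactly the route the paper intends: the corollary is placed immediately after the Pietsch--Geiss Domination Theorem, and (as in the linear Inclusion Theorem earlier in the text, deduced ``using the canonical inclusions from $L_p$ spaces'') the passage from $p$ to $q$ is precisely the monotonicity $\|\cdot\|_{L_p(\mu)}\le\|\cdot\|_{L_q(\mu)}$ on each probability space $B_{E_j^{\ast}}$, applied factor by factor.
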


This class has several other similarities with the linear concept of
absolutely summing operators. We mention two results whose proofs mimic the
linear analogues:

\begin{theorem}
[Mel\'{e}ndez-Tonge, 1999](\cite{MT}) Let $2<p<r^{\ast}<\infty.$ Let $n$ be a
positive integer and $F$ be a Banach space. Then%
\[
\mathcal{L}_{d,1}(^{n}\ell_{p};F)=\mathcal{L}_{d,r}(^{n}\ell_{p};F).
\]

\end{theorem}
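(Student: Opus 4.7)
The inclusion $\mathcal{L}_{d,1}({}^n\ell_p;F)\subseteq \mathcal{L}_{d,r}({}^n\ell_p;F)$ is immediate from the Inclusion Corollary above, so the content is the reverse inclusion. The plan is to invoke the Pietsch--Geiss Domination Theorem on an $r$-dominated $T$ to produce regular probability measures $\mu_1,\ldots,\mu_n$ on $(B_{\ell_{p^{\ast}}},w^{\ast})$ with
\[
\|T(x_1,\ldots,x_n)\|\;\le\;C\prod_{j=1}^{n}\left(\int_{B_{\ell_{p^{\ast}}}}|\varphi(x_j)|^{r}\,d\mu_j(\varphi)\right)^{1/r}.
\]
For each $j$ I would then introduce the linear evaluation map $I_j\colon \ell_p\to L_r(\mu_j)$ given by $I_j(x)(\varphi)=\varphi(x)$; the displayed inequality becomes $\|T(x_1,\ldots,x_n)\|\le C\prod_{j}\|I_j(x_j)\|_{L_r(\mu_j)}$.

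The strategy then has two steps. First, I would show that every $I_j$ is absolutely $1$-summing; once this is in hand, the linear Pietsch Domination Theorem supplies probability measures $\nu_j$ on $B_{\ell_{p^{\ast}}}$ with
\[
\|I_j(x)\|_{L_r(\mu_j)}\;\le\;K_j\int_{B_{\ell_{p^{\ast}}}}|\psi(x)|\,d\nu_j(\psi),
\]
and substituting back into the previous inequality produces a $1$-dominated representation for $T$. Observe that each $I_j$ is tautologically $r$-summing with $\pi_r(I_j)\le 1$ (because $\mu_j$ is a probability measure and each $\varphi\in B_{\ell_{p^{\ast}}}$ satisfies $|\varphi(x)|\le\|x\|_p$), so the real task is to upgrade $r$-summability to $1$-summability for operators out of $\ell_p$.

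The crux is therefore the purely linear fact that, under the hypothesis $2<p<r^{\ast}$ (equivalently $p>2$ and $1<r<p^{\ast}<2$), every absolutely $r$-summing operator $\ell_p\to Y$ is automatically absolutely $1$-summing; this is precisely the ``linear analogue'' whose proof strategy one mimics here. The standard route is to exploit that $\ell_p$ has cotype $p$, so by Maurey--Pisier $\mathrm{id}_{\ell_p}$ is $(p,1)$-summing, and then apply Maurey's extrapolation theorem to transfer summability indices between $r$ and $1$ for operators defined on $\ell_p$. This collapse of the scale of summing ideals on $\ell_p$ for $p>2$ is the main obstacle; once it is granted, applying it to each $I_j$ and assembling the resulting integral estimates into a product yields the $1$-dominated bound for $T$ and concludes the proof.
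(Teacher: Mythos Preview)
Your reduction is exactly the approach the paper has in mind: it states this result without proof, remarking only that the argument ``mimics the linear analogue.'' Passing through the Pietsch--Geiss domination to obtain the maps $I_j\colon \ell_p\to L_r(\mu_j)$, upgrading each $I_j$ from $r$-summing to $1$-summing, and then reassembling via the linear Pietsch Domination Theorem is precisely how one transports the linear coincidence $\Pi_1(\ell_p;Y)=\Pi_r(\ell_p;Y)$ (valid for $2<p<r^{\ast}$) to the $n$-linear dominated setting.

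One small caution on your sketch of the linear step: the route ``cotype $p$ $\Rightarrow$ $\mathrm{id}_{\ell_p}$ is $(p,1)$-summing $\Rightarrow$ Maurey extrapolation'' is not the cleanest or most direct justification here, and Maurey's extrapolation theorem as usually stated does not immediately deliver $\Pi_r=\Pi_1$ on $\ell_p$ from those ingredients. The standard argument for the linear coincidence $\Pi_1(\ell_p;\cdot)=\Pi_r(\ell_p;\cdot)$ when $2<p<r^{\ast}$ (equivalently $1\le r<p^{\ast}<2$) instead goes through the fact that an $r$-summing operator on $\ell_p$ factors, via its Pietsch measure, through a subspace of $L_r$ that is isomorphic to a Hilbert space (since $r<p^{\ast}<2$ forces the relevant inclusion $\ell_p\hookrightarrow L_r$ to land in a stable/type-2 situation), after which $2$-summing and $1$-summing coincide. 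This is the content of the linear Mel\'endez--Tonge/DJT argument you are meant to mimic; once you cite it correctly, the rest of your write-up goes through unchanged.
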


\begin{theorem}
[Extrapolation Theorem, 2005](\cite{irishd}) If $1<r<p<\infty$ and $E$ is a
Banach space such that%
\[
\mathcal{L}_{d,p}(^{n}E;\ell_{p})=\mathcal{L}_{d,r}(^{n}E;\ell_{p}),
\]
then
\[
\mathcal{L}_{d,p}(^{n}E;F)=\mathcal{L}_{d,1}(^{n}E;F)
\]
for every Banach space $F.$\bigskip
\end{theorem}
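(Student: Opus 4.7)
The inclusion $\mathcal{L}_{d,1}(^{n}E;F)\subset\mathcal{L}_{d,p}(^{n}E;F)$ is already contained in the Inclusion Corollary above, so the real content is the reverse inclusion. The plan is to use Pietsch--Geiss to reduce the multilinear statement to a linear-ideal statement on $E$, and then to appeal to the classical linear extrapolation theorem of Maurey: for $1<r<p<\infty$, if $\Pi_{p}(E;\ell_{p})=\Pi_{r}(E;\ell_{p})$, then $\Pi_{p}(E;G)=\Pi_{1}(E;G)$ for every Banach space $G$.

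\textbf{Step 1: from the multi-ideal hypothesis to a linear-ideal coincidence on $E$.} Given $u\in\Pi_{p}(E;\ell_{p})$ and any fixed $\varphi_{2},\ldots,\varphi_{n}\in B_{E^{*}}$ of norm one, I would form the $n$-linear map
\[
T(x_{1},\ldots,x_{n})=\varphi_{2}(x_{2})\cdots\varphi_{n}(x_{n})\,u(x_{1}).
\]
The linear Pietsch domination for $u$, combined with the Dirac measures $\delta_{\varphi_{j}}$ on $B_{E^{*}}$ (which give $|\varphi_{j}(x_{j})|=(\int|\varphi(x_{j})|^{p}\,d\delta_{\varphi_{j}})^{1/p}$), fits $T$ into the Pietsch--Geiss template, so $T\in\mathcal{L}_{d,p}(^{n}E;\ell_{p})$. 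By hypothesis $T\in\mathcal{L}_{d,r}$, and evaluating the resulting $r$-domination at $x_{2},\ldots,x_{n}$ chosen via Hahn--Banach with $\|x_{j}\|\le 1$ and $\varphi_{j}(x_{j})$ bounded away from zero recovers a linear Pietsch $r$-domination of $u$. Thus $\Pi_{p}(E;\ell_{p})=\Pi_{r}(E;\ell_{p})$, and Maurey's extrapolation upgrades this to $\Pi_{p}(E;G)=\Pi_{1}(E;G)$ for every Banach space $G$.

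\textbf{Step 2: improving the domination of an arbitrary $T\in\mathcal{L}_{d,p}(^{n}E;F)$.} Pietsch--Geiss provides a constant $C>0$ and probability measures $\mu_{1},\ldots,\mu_{n}$ on $B_{E^{*}}$ with
\[
\|T(x_{1},\ldots,x_{n})\|\le C\prod_{j=1}^{n}\Bigl(\int_{B_{E^{*}}}|\varphi(x_{j})|^{p}\,d\mu_{j}(\varphi)\Bigr)^{1/p}.
\]
The canonical evaluation map $J_{j}\colon E\to L_{p}(\mu_{j})$, $x\mapsto(\varphi\mapsto\varphi(x))$, is $p$-summing (one-line check: $\sum_{i}\|J_{j}x_{i}\|_{L_{p}(\mu_{j})}^{p}=\int\sum_{i}|\varphi(x_{i})|^{p}\,d\mu_{j}\le\|(x_{i})\|_{w,p}^{p}$), and hence $1$-summing by Step 1. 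The linear Pietsch theorem then supplies probability measures $\nu_{j}$ on $B_{E^{*}}$ and constants $C_{j}$ with
\[
\Bigl(\int_{B_{E^{*}}}|\varphi(x)|^{p}\,d\mu_{j}\Bigr)^{1/p}=\|J_{j}(x)\|_{L_{p}(\mu_{j})}\le C_{j}\int_{B_{E^{*}}}|\varphi(x)|\,d\nu_{j}(\varphi).
\]
Substituting these estimates into the $p$-domination of $T$ yields a Pietsch--Geiss $1$-domination, so $T\in\mathcal{L}_{d,1}(^{n}E;F)$, as desired.

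The only deep ingredient is Maurey's linear extrapolation theorem, invoked as a black box at the end of Step~1; the remaining manipulations are routine applications of linear Pietsch and of Pietsch--Geiss. The one spot demanding a bit of care is recognizing that the scalar factors $\varphi_{j}(x_{j})$ in the auxiliary $T$ of Step~1 can be absorbed into the Pietsch--Geiss template via Dirac measures, and that the specialization in $x_{2},\ldots,x_{n}$ used to extract a linear domination of $u$ loses only a harmless multiplicative constant.
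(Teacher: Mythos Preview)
The paper does not actually present a proof of this theorem: it merely cites \cite{irishd} and remarks that the argument ``mimics the linear analogue'' (Maurey's extrapolation theorem). So there is no detailed proof in the paper to compare against line by line.

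Your argument is correct. Step~1 cleanly extracts the linear hypothesis $\Pi_{p}(E;\ell_{p})=\Pi_{r}(E;\ell_{p})$ from the multilinear one: the tensor $T=\varphi_{2}\otimes\cdots\otimes\varphi_{n}\otimes u$ is $p$-dominated via Pietsch--Geiss with the Dirac measures $\delta_{\varphi_{j}}$, the hypothesis makes it $r$-dominated, and fixing $x_{2},\ldots,x_{n}$ with $|\varphi_{j}(x_{j})|\ge 1/2$ and $\|x_{j}\|\le 1$ gives $\|u(x_{1})\|\le 2^{n-1}C\bigl(\int|\varphi(x_{1})|^{r}\,d\mu_{1}\bigr)^{1/r}$, since the remaining factors on the right are bounded by $\|x_{j}\|\le 1$. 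Step~2 is the standard linearization: the canonical $J_{j}\colon E\to L_{p}(\mu_{j})$ is $p$-summing with $\pi_{p}(J_{j})\le 1$, Maurey upgrades it to $1$-summing, and substituting the resulting one-dimensional Pietsch dominations back into the Pietsch--Geiss inequality for $T$ yields a $1$-domination.

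As to the comparison: your route is a \emph{reduction} to the linear extrapolation theorem, invoked as a black box, via the Pietsch--Geiss characterization. The phrase ``mimic the linear analogues'' in the paper could equally well describe a direct transplantation of Maurey's iterative $L_{p}$ argument to the $n$-linear setting; but in practice the reduction you give is the natural and economical way to do it, and is very likely what \cite{irishd} does as well. Either way, the substance is the same: the Pietsch--Geiss domination makes the multilinear problem a product of $n$ independent linear ones, and all the depth resides in Maurey's linear theorem.
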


A consequence of Grothendieck's Inequality ensures a rare coincidence
situation for this class (this result seems to be part of the folklore of the theory):

\begin{theorem}
\label{yb}$\mathcal{L}_{d,2}(^{2}c_{0};\mathbb{K})=\mathcal{L}(^{2}%
c_{0};\mathbb{K}).$
\end{theorem}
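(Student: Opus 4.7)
The plan is to deduce this coincidence directly from Grothendieck's Inequality (\thmref{dggg}). By the definition of $p$-domination with $n=p=2$, $T \in \mathcal{L}(^2 c_0;\mathbb{K})$ is $2$-dominated precisely when $(T(x_j,y_j))_{j=1}^\infty \in \ell_1$ for all $(x_j)_j,(y_j)_j \in \ell_2^w(c_0)$. By a routine closed-graph argument this is equivalent to the existence of a constant $C>0$ such that
\[
\sum_{j=1}^n |T(x_j,y_j)| \leq C\,\|(x_j)_{j=1}^n\|_{w,2}\,\|(y_j)_{j=1}^n\|_{w,2}
\]
for every finite family $x_1,\ldots,x_n,y_1,\ldots,y_n \in c_0$, and my goal is to establish this with $C = K_G\|T\|$.

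First, I would reduce to a finite-dimensional section: by continuity of $T$ and monotonicity of $\|\cdot\|_{w,2}$ under coordinate truncation, one may assume every $x_j=(x_{jk})_{k=1}^N$ and $y_j=(y_{jl})_{l=1}^N$ is supported on the first $N$ coordinates. Writing $a_{kl} := T(e_k,e_l)$ gives $T(x_j,y_j) = \sum_{k,l=1}^N a_{kl} x_{jk} y_{jl}$ and, crucially, $|\sum_{k,l} a_{kl} s_k t_l| \leq \|T\|$ whenever $|s_k|,|t_l|\leq 1$. Next, I would set up the Hilbertian input for Grothendieck's Inequality: choose unimodular scalars $\varepsilon_j$ with $\varepsilon_j T(x_j,y_j) = |T(x_j,y_j)|$, set $A := \|(x_j)_{j=1}^n\|_{w,2}$ and $B := \|(y_j)_{j=1}^n\|_{w,2}$, and in $H = \ell_2^n$ consider
\[
u_k := \tfrac{1}{A}(x_{jk})_{j=1}^n, \qquad v_l := \tfrac{1}{B}(\overline{\varepsilon_j\, y_{jl}})_{j=1}^n.
\]
Testing the weak-$\ell_2$ norm against the coordinate functional $e_k^* \in B_{\ell_1} = B_{c_0^*}$ gives $\|u_k\|_2 \leq 1$, and the same trick (with $|\varepsilon_j|=1$) gives $\|v_l\|_2 \leq 1$.

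Applying \thmref{dggg} to the matrix $(a_{kl})$ at these unit vectors yields
\[
\left|\sum_{k,l=1}^N a_{kl}\,\langle u_k,v_l\rangle\right| \leq K_G \sup_{|s_k|,|t_l|\leq 1}\left|\sum_{k,l} a_{kl} s_k t_l\right| \leq K_G\|T\|,
\]
and a direct computation shows that the left-hand side equals $(AB)^{-1}\sum_j \varepsilon_j T(x_j,y_j) = (AB)^{-1}\sum_j |T(x_j,y_j)|$. Rearranging gives the desired bilinear inequality with $C = K_G \|T\|$, so $(T(x_j,y_j))_j \in \ell_1 = \ell_{2/2}$ and $T$ is $2$-dominated.

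The only genuinely substantive point is the observation that for a finite sequence $(x_j)\subset c_0$, the $\ell_2$-norm of each ``column vector'' $(x_{jk})_{j=1}^n$ is dominated by $\|(x_j)\|_{w,2}$; this comes almost for free from testing the weak norm against the coordinate functionals $e_k^*$, which is what makes Grothendieck's matrix inequality immediately applicable here. Everything else is clean bookkeeping and a direct invocation of \thmref{dggg}, which is why this coincidence is usually classified as classical folklore.
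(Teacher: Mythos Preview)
Your proof is correct and follows essentially the same route as the paper's: reduce to finitely supported vectors, set $a_{kl}=T(e_k,e_l)$, form the column vectors $(x_{jk})_j,(y_{jl})_j\in\ell_2^n$, bound their norms by the weak-$2$ norm via coordinate functionals, and apply Grothendieck's Inequality. The only cosmetic difference is that the paper treats the real case and inserts the signs $\varepsilon_k$ at the end, whereas you fold the unimodular scalars into the definition of $v_l$ from the start, thereby covering the complex case simultaneously.
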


\begin{proof}
(Real case) It suffices to deal with $A:\ell_{\infty}^{m}\times\ell_{\infty
}^{m}\rightarrow\mathbb{R}$ with $\left\Vert A\right\Vert \leq1$. Note that%
\[
\left\vert A(x,y)\right\vert =\left\vert A\left(
%TCIMACRO{\dsum \limits_{i=1}^{m}}%
%BeginExpansion
{\displaystyle\sum\limits_{i=1}^{m}}
%EndExpansion
x_{i}e_{i},%
%TCIMACRO{\dsum \limits_{i=1}^{m}}%
%BeginExpansion
{\displaystyle\sum\limits_{i=1}^{m}}
%EndExpansion
y_{i}e_{i}\right)  \right\vert =\left\vert
%TCIMACRO{\dsum \limits_{i,j=1}^{m}}%
%BeginExpansion
{\displaystyle\sum\limits_{i,j=1}^{m}}
%EndExpansion
A(e_{i},e_{j})x_{i}y_{j}\right\vert .
\]
Let $a_{ij}=A(e_{i},e_{j})$ and $(x_{k})_{k=1}^{N},(y_{k})_{k=1}^{N}\in
\ell_{2}^{w}(\ell_{\infty}^{m})$ be so that $\left\Vert (x_{k})_{k=1}%
^{N}\right\Vert _{w,2},\left\Vert (y_{k})_{k=1}^{N}\right\Vert _{w,2}\leq1$,
with%
\[
x_{k}=(x_{k}^{(1)},...,x_{k}^{(m)})\text{ and }y_{k}=(y_{k}^{(1)}%
,...,y_{k}^{(m)}).
\]
Hence, for $i,j=1,...,m,$ consider%
\[
\widetilde{x_{i}}:=(x_{1}^{(i)},...,x_{N}^{(i)})\in\ell_{2}^{N}\text{ and
}\widetilde{y_{j}}:=(y_{1}^{(j)},...,y_{N}^{(j)})\in\ell_{2}^{N}.
\]
It is well-known (see, for example, \cite[Proposici\'{o}n 5.18]{tesina}) that%
\[
\left\Vert (x_{k})_{k=1}^{N}\right\Vert _{w,2}^{2}=\max_{1\leq i\leq
m}\left\Vert \widetilde{x_{i}}\right\Vert ^{2}\text{ and }\left\Vert
(y_{k})_{k=1}^{N}\right\Vert _{w,2}^{2}=\max_{1\leq j\leq m}\left\Vert
\widetilde{y_{j}}\right\Vert ^{2}.
\]
So we have $\left\Vert \widetilde{x_{i}}\right\Vert \leq1,\left\Vert
\widetilde{y_{j}}\right\Vert \leq1$ for every $i,j=1,...,m,$ and, since
$\left\Vert A\right\Vert \leq1,$ from Grothendieck's Inequality we have%
\[
\left\vert
%TCIMACRO{\dsum \limits_{i,j=1}^{m}}%
%BeginExpansion
{\displaystyle\sum\limits_{i,j=1}^{m}}
%EndExpansion
a_{ij}<\widetilde{x_{i}},\widetilde{y_{j}}>\right\vert \leq K_{G},
\]
and therefore%
\[
\left\vert
%TCIMACRO{\dsum \limits_{i,j=1}^{m}}%
%BeginExpansion
{\displaystyle\sum\limits_{i,j=1}^{m}}
%EndExpansion
a_{ij}%
%TCIMACRO{\dsum \limits_{k=1}^{N}}%
%BeginExpansion
{\displaystyle\sum\limits_{k=1}^{N}}
%EndExpansion
x_{k}^{(i)}y_{k}^{(j)}\right\vert \leq K_{G}%
\]
i.e.,%
\[
\left\vert
%TCIMACRO{\dsum \limits_{k=1}^{N}}%
%BeginExpansion
{\displaystyle\sum\limits_{k=1}^{N}}
%EndExpansion
\left(
%TCIMACRO{\dsum \limits_{i,j=1}^{m}}%
%BeginExpansion
{\displaystyle\sum\limits_{i,j=1}^{m}}
%EndExpansion
a_{ij}x_{k}^{(i)}y_{k}^{(j)}\right)  \right\vert \leq K_{G}%
\]
and%
\[
\left\vert
%TCIMACRO{\dsum \limits_{k=1}^{N}}%
%BeginExpansion
{\displaystyle\sum\limits_{k=1}^{N}}
%EndExpansion
A\left(  x_{k},y_{k}\right)  \right\vert =\left\vert
%TCIMACRO{\dsum \limits_{k=1}^{N}}%
%BeginExpansion
{\displaystyle\sum\limits_{k=1}^{N}}
%EndExpansion
A\left(
%TCIMACRO{\dsum \limits_{i=1}^{m}}%
%BeginExpansion
{\displaystyle\sum\limits_{i=1}^{m}}
%EndExpansion
x_{k}^{(i)}e_{i},%
%TCIMACRO{\dsum \limits_{j=1}^{m}}%
%BeginExpansion
{\displaystyle\sum\limits_{j=1}^{m}}
%EndExpansion
y_{k}^{(j)}e_{j}\right)  \right\vert \leq K_{G}.
\]
Since $x_{k}$ can be replaced by $\varepsilon_{k}x_{k}$ with $\varepsilon
_{k}=1$ or $-1$, we can conclude that
\[%
%TCIMACRO{\dsum \limits_{k=1}^{N}}%
%BeginExpansion
{\displaystyle\sum\limits_{k=1}^{N}}
%EndExpansion
\left\vert A\left(  x_{k},y_{k}\right)  \right\vert \leq K_{G}.
\]
\bigskip
\end{proof}

In fact the result above is valid for $\mathcal{L}_{\infty}$ spaces instead of
$c_{0}$. For a direct proof of this result to $C(K)$ spaces we refer to
\cite{irish}.

It is also known that dominated multilinear maps satisfy a Dvoretzky-Rogers
type theorem ($\mathcal{L}_{d,p}(^{n}E;E)=\mathcal{L}(^{n}E;E)$ if and only if
$\dim E<\infty$). Recent results show that this class is too small, in some
sense (coincidence situations are almost impossible). The proof of the next
result presented here is different from the original \cite{Jar}, and appears
in \cite{belg}:

\begin{theorem}
[Jarchow, Palazuelos, P\'{e}rez-Garc\'{\i}a and Villanueva, 2007]%
\label{ffy}(\cite{Jar}) For every $n\geq3$ and every $p\geq1$ and every
infinite dimensional Banach space $E$ there exists $T\in\mathcal{L}%
(^{n}E;\mathbb{K})$ that fails to be $p$-dominated.
\end{theorem}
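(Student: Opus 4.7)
The approach I would take rests on three ingredients: extracting a diagonal inequality from the Pietsch--Geiss domination theorem, producing a family of test forms via Dvoretzky's theorem, and concluding through the Closed Graph Theorem. Specifically, starting from the Pietsch--Geiss representation of a $p$-dominated $T\in\mathcal{L}_{d,p}(^nE;\mathbb{K})$ with constant $C$, one specializes $x_1=\cdots=x_n=x_i$, applies AM--GM to the geometric mean of the $n$ integrals on the right-hand side, sums over $i$, and uses $\sup_{\varphi\in B_{E^{*}}}\sum_i|\varphi(x_i)|^p=\|(x_i)\|_{w,p}^p$ to derive
\[\sum_i |T(x_i,\ldots,x_i)|^{p/n}\le C^{p/n}\|(x_i)\|_{w,p}^p\]
for every finite sequence $(x_i)\subset E$. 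This is the only consequence of $p$-domination the argument requires.

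Next, by Dvoretzky's theorem one chooses for each $k\in\mathbb{N}$ a subspace $F_k\subset E$ with $d_{BM}(F_k,\ell_2^k)\le 2$; let $v_1^{(k)},\ldots,v_k^{(k)}\in F_k$ be the basis corresponding to the canonical basis of $\ell_2^k$ and let $\varphi_1^{(k)},\ldots,\varphi_k^{(k)}\in E^{*}$ of norm at most $2$ be Hahn--Banach extensions of the biorthogonal functionals. Put
\[T_k(x_1,\ldots,x_n)=\sum_{i=1}^k \varphi_i^{(k)}(x_1)\cdots\varphi_i^{(k)}(x_n).\]
Then $T_k(v_i^{(k)},\ldots,v_i^{(k)})=1$ for each $i$, and the identification $F_k\cong_2\ell_2^k$ yields $\|(v_i^{(k)})_{i=1}^k\|_{w,p}^p\le 2^p\max(1,k^{1-p/2})$. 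Substituting into the diagonal inequality forces the $p$-dominated constant of $T_k$ to satisfy $C\ge c_{n,p}\,k^{\min(n/p,\,n/2)}$, which tends to infinity as $k\to\infty$ for every fixed $p<\infty$. If $\mathcal{L}(^nE;\mathbb{K})=\mathcal{L}_{d,p}(^nE;\mathbb{K})$ held as a set equality, the Closed Graph Theorem would provide $M<\infty$ with $\|T\|_{d,p}\le M\|T\|$ for all $T$, and the sequence $(T_k)$ is designed to violate any such bound.

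The main obstacle is controlling $\|T_k\|$ from above: the Hahn--Banach extensions need not combine well, and the naive estimate $\|T_k\|\lesssim k$ (via $\|\sum_i\eta_i\varphi_i^{(k)}\|_{E^{*}}\le 2\sqrt{k}\,\|\eta\|_2$) can cancel the growth of the $p$-dominated constant when $p\ge n$. Two standard remedies are available: either select Dvoretzky subspaces that are uniformly complemented in $E$, so that $T_k=T_{0,k}\circ(P_k,\ldots,P_k)$ factors through a bounded projection and $\|T_k\|$ stays uniformly bounded; or abandon the Closed Graph step and build $T$ directly as a block-diagonal sum $T=\sum_k a_kT_k$ on pairwise disjoint blocks of a normalized basic sequence in $E$, verifying failure of $p$-domination by plugging in a sequence activating arbitrarily many blocks at once. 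The restriction $n\ge 3$ is felt through the exponent $\min(n/p,n/2)$: for $n=2$ the coincidence actually holds on $E=c_0$ by Theorem \ref{yb}, so some restriction of this sort is essential.
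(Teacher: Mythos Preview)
Your diagonal inequality is correct, and the lower bound $\|T_k\|_{d,p}\ge c_{n,p}\,k^{\min(n/p,n/2)}$ is derived cleanly. The difficulty you flag, however, is not a technicality but a genuine gap that your two remedies do not close.

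With the naive estimate $\|T_k\|\lesssim k$, the ratio $\|T_k\|_{d,p}/\|T_k\|$ blows up only when $\min(n/p,n/2)>1$, i.e.\ when $p<n$; for $p\ge n$ the argument stalls. Remedy~1 (uniformly complemented Dvoretzky subspaces) is not available in general: in any $\mathcal{L}_\infty$-space such as $C(K)$ or $\ell_\infty$, every copy of $\ell_2^k$ has projection constant of order $\sqrt{k}$ (this is the absolute projection constant of $\ell_2^k$), so the factored form satisfies only $\|T_k\|\lesssim k^{n/2}$, which wipes out the lower bound for every $p\ge 2$. Remedy~2 faces the same obstruction: a block-diagonal sum $\sum_k a_kT_k$ built from Hahn--Banach extensions of coordinate functionals on a basic sequence still requires control of $\|T\|$ on all of $E$, and there is no multilinear Hahn--Banach theorem to bound the norm of the extended form. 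Working instead on the closed span of the basic sequence proves the result only for that subspace, not for the given $E$. So as written the proposal handles the range $p<n$ (hence all $p$ once $n$ is large, but not the critical case $n=3$, $p\ge 3$).

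The paper's argument avoids this extension problem entirely by currying. Assuming $\mathcal{L}(^{3}E;\mathbb{K})=\mathcal{L}_{d,p}(^{3}E;\mathbb{K})$, one deduces (via \cite[Lemma~3.4]{irish}) that every bounded linear operator from $E$ into $\mathcal{L}(^{2}E;\mathbb{K})$ is $p$-summing. But $\mathcal{L}(^{2}E;\mathbb{K})$ contains $\ell_\infty^k$ uniformly and therefore has no finite cotype; Theorem~\ref{oik}(iii) then yields a non-$p$-summing operator into it, a contradiction. The step from $n=3$ to $n>3$ is immediate by multiplying by fixed functionals. This route never needs to extend a multilinear form from a subspace, which is exactly where your approach gets stuck.
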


\begin{proof}
Suppose that every $T\in\mathcal{L}(^{3}E;\mathbb{K})$ is $p$-dominated. From
\cite[Lemma 3.4]{irish} one can conclude that every continuous linear operator
from $E$ to $\mathcal{L}(E;\mathcal{L}(^{2}E;\mathbb{K}))$ is $p$-summing.
From \cite[Proposition 19.17]{djt} we know that $\mathcal{L}(^{2}%
E;\mathbb{K})$ has no finite cotype, but from Theorem \ref{oik} (iii) this is
not possible. Since the result is true for $n=3$, it is easy to conclude that
it is true for $n>3$.
\end{proof}

For polynomial versions of this result we refer to \cite{PAMS, PAMS2} and for
more results on dominated multilinear operators/polynomials we refer to
\cite{irish, PAMS, cg, Jar, MT} and references therein.

Since Theorem \ref{ffy} is valid for $n\geq3$, a natural question is: are
there coincidence situations for $n=2$ different from the obvious variations
of Theorem \ref{yb}? The answer is yes:

\begin{theorem}
[Botelho, Pellegrino, Rueda, 2010](\cite{jap})\label{jja} Let $E$ be a cotype
$2$ space. Then $E\widehat{\otimes}_{\pi}E=E\widehat{\otimes}_{\varepsilon}E$
if and only if $\mathcal{L}_{d,1}(^{2}E;\mathbb{K})=\mathcal{L}(^{2}%
E;\mathbb{K})$.
\end{theorem}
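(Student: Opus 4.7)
The approach is dualization: I would translate the equality of tensor products into equality of their duals (spaces of bilinear forms), which after appropriate identifications becomes precisely the claimed equality of multi-ideals. The universal property of the projective tensor product gives $(E\widehat{\otimes}_{\pi}E)^{\ast}\cong\mathcal{L}(^{2}E;\mathbb{K})$ isometrically, so the substantive work lies in establishing $(E\widehat{\otimes}_{\varepsilon}E)^{\ast}\cong\mathcal{L}_{d,1}(^{2}E;\mathbb{K})$ (with equivalent norms), which is where the cotype $2$ hypothesis enters.

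To obtain this second identification, I would argue as follows. One inclusion is routine: any $T\in(E\widehat{\otimes}_{\varepsilon}E)^{\ast}$ is integral and admits a representation of the form $T(x,y)=\int_{B_{E^{\ast}}\times B_{E^{\ast}}}\varphi(x)\psi(y)\,d\mu(\varphi,\psi)$, from which $1$-domination follows after passing to marginals. Conversely, given $T\in\mathcal{L}_{d,1}(^{2}E;\mathbb{K})$, the Pietsch-Geiss Domination Theorem stated above supplies probability measures $\mu_{1},\mu_{2}$ on $B_{E^{\ast}}$ with $|T(x,y)|\leq C\int|\varphi(x)|\,d\mu_{1}\int|\psi(y)|\,d\mu_{2}$, i.e.\ a factorization of each slice through an $L_{1}$-space. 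One then uses the cotype $2$ hypothesis on $E$ (for instance through a Maurey-type factorization applied to the associated linear operator $E\to E^{\ast}$) to upgrade this coordinatewise product bound into a genuine joint integral representation, equivalently into continuity of $T$ on $E\widehat{\otimes}_{\varepsilon}E$. This is the technical heart of the argument.

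With both dualities available, the rest is formal. The forward implication is immediate: if $E\widehat{\otimes}_{\pi}E=E\widehat{\otimes}_{\varepsilon}E$ as Banach spaces, then their duals coincide, giving $\mathcal{L}(^{2}E;\mathbb{K})=\mathcal{L}_{d,1}(^{2}E;\mathbb{K})$. For the converse, the inclusion $\mathcal{L}_{d,1}(^{2}E;\mathbb{K})\hookrightarrow\mathcal{L}(^{2}E;\mathbb{K})$ is always continuous, so equality as sets combined with the Open Mapping Theorem yields equivalence of norms; pulling back through the identifications shows that the natural restriction map $(E\widehat{\otimes}_{\varepsilon}E)^{\ast}\to(E\widehat{\otimes}_{\pi}E)^{\ast}$ is an isomorphism of Banach spaces. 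A Hahn-Banach argument, expressing $\|v\|_{\pi}$ and $\|v\|_{\varepsilon}$ for $v\in E\otimes E$ as suprema over the unit balls of the respective duals, converts equivalence of dual norms into equivalence of $\pi$ and $\varepsilon$ on $E\otimes E$, completing the proof. The main obstacle, as indicated, is the identification $(E\widehat{\otimes}_{\varepsilon}E)^{\ast}\cong\mathcal{L}_{d,1}(^{2}E;\mathbb{K})$: without a geometric assumption like cotype $2$ there is in general a gap between integral and $1$-dominated bilinear forms, and closing this gap is where all the content of the theorem resides.
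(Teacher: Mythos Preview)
The paper does not actually prove this theorem: it is stated with a reference to \cite{jap} and no argument is supplied, so there is no ``paper's own proof'' to compare your proposal against.

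On its own merits, your dualization strategy is the natural one and the outline is sound. The identification $(E\widehat{\otimes}_{\pi}E)^{\ast}\cong\mathcal{L}(^{2}E;\mathbb{K})$ is standard, and the crux is indeed the identification of $(E\widehat{\otimes}_{\varepsilon}E)^{\ast}$ (the integral bilinear forms) with $\mathcal{L}_{d,1}(^{2}E;\mathbb{K})$ under the cotype~$2$ assumption. Your sketch of that step---integral forms are $1$-dominated via the Grothendieck integral representation, and conversely $1$-dominated forms become integral by using the Pietsch--Geiss factorization together with a Maurey-type argument exploiting cotype~$2$---points at the right ingredients. One should be a bit careful in the converse step: the product bound $|T(x,y)|\leq C\|j_{1}x\|_{L_{1}(\mu_{1})}\|j_{2}y\|_{L_{1}(\mu_{2})}$ gives a factorization of the associated linear map $E\to E^{\ast}$ through $L_{1}$-spaces, and it is the fact that $1$-summing operators from a cotype~$2$ space are $2$-summing (and hence factor through a Hilbert space, yielding an integral factorization) that closes the gap. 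You allude to this but do not spell it out; in a full write-up that is the place to be explicit. The final Hahn--Banach/Open Mapping passage from equality of duals back to $\pi=\varepsilon$ on $E\otimes E$ is routine, as you say.
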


The existence of spaces fulfilling the hypotheses of Theorem \ref{jja} is
assured by G. Pisier \cite{18}. Also, $\cot E=2$ is a necessary condition for
Theorem \ref{jja} since in \cite{jap} it is also proved that%
\[
\mathcal{L}_{d,1}(^{2}E;\mathbb{K})=\mathcal{L}(^{2}E;\mathbb{K}%
)\Rightarrow\cot E=2.
\]

\subsection{Semi-integral multilinear operators}

If $p\geq1,$ $T\in\mathcal{L}(E_{1},...E_{n};F)$ is $p$-semi-integral
$(T\in\mathcal{L}_{si,p}(E_{1},...,E_{n};F))$ if there exists a $C\geq0$ such
that%
\[
\left(  \sum\limits_{j=1}^{m}\parallel T(x_{j}^{(1)},...,x_{j}^{(n)}%
)\parallel^{p}\right)  ^{1/p}\leq C\left(  \sup_{\left(  \varphi
_{1},..,\varphi_{_{n}}\right)  \in B_{E_{1}^{\ast}}\times\cdots\times
B_{E_{n}^{\ast}}}\sum\limits_{j=1}^{m}\mid\varphi_{1}(x_{j}^{(1)}%
)...\varphi_{n}(x_{j}^{(n)})\mid^{p}\right)  ^{1/p}%
\]
for every $m\in\mathbb{N}$, $x_{j}^{(l)}\in E_{l}$ with $l=1,...,n$ and
$j=1,...,m.$

This ideal goes back to the research report \cite{AlencarMatos} of R. Alencar
and M.C. Matos and was explored in \cite{CD}. As in the case of $p$-dominated
multilinear operators, a Pietsch Domination theorem is valid in this context
(for a proof we mention \cite{CD}, although the result is inspired by the case
$p=1$ from Alencar-Matos paper \cite{AlencarMatos}; see also \cite{BPRn} for a
recent general argument):

\begin{theorem}
[Alencar, Matos, 1989 and \c{C}aliskan, Pellegrino, 2007]$T\in\mathcal{L}%
(E_{1},...E_{n};F)$ is $p$-semi-integral if and only if there exist $C\geq0$
and a regular probability measure $\mu$ on the Borel $\sigma-$algebra
$\mathcal{B}(B_{E_{1}^{^{\ast}}}\times\cdots\times$ $B_{E_{n}^{^{\ast}}})$ of
$B_{E_{1}^{^{\ast}}}\times\cdots\times$ $B_{E_{n}^{^{\ast}}}$ endowed with the
product of the weak star topologies $\sigma(E_{l}^{\ast},E_{l}),$ $l=1,...,n,$
such that
\[
\parallel T(x_{1},...,x_{n})\parallel\leq C\left(  \int_{B_{E_{1}^{\ast}%
}\times\cdots\times B_{E_{n}^{\ast}}}\mid\varphi_{1}(x_{1})...\varphi
_{n}(x_{n})\mid^{p}d\mu(\varphi_{1},...,\varphi_{n})\right)  ^{1/p}%
\]

\end{theorem}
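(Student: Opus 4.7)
(Proposal) The plan is to mimic, step by step, the proof of the classical Pietsch Domination Theorem sketched in Theorem \ref{ppk}, replacing the compact set $B_{E^{\ast}}$ by the product
\[
K:=B_{E_{1}^{\ast}}\times\cdots\times B_{E_{n}^{\ast}},
\]
which is compact in the product of the weak-star topologies by Tychonoff's Theorem. The candidate ``test function'' on $K$ attached to a tuple $(x_{1},\ldots,x_{n})\in E_{1}\times\cdots\times E_{n}$ will naturally be
\[
(\varphi_{1},\ldots,\varphi_{n})\longmapsto|\varphi_{1}(x_{1})\cdots\varphi_{n}(x_{n})|^{p},
\]
which is continuous on $K$; this is exactly the term governing the right-hand side of the semi-integral inequality and of the desired domination.

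The easy implication should be straightforward: if the domination holds, then for any finite family $x_{j}^{(l)}\in E_{l}$, $1\le j\le m$, $1\le l\le n$, raising to the $p$-th power, summing in $j$ and exchanging sum and integral gives
\[
\sum_{j=1}^{m}\|T(x_{j}^{(1)},\ldots,x_{j}^{(n)})\|^{p}\le C^{p}\int_{K}\sum_{j=1}^{m}|\varphi_{1}(x_{j}^{(1)})\cdots\varphi_{n}(x_{j}^{(n)})|^{p}\,d\mu,
\]
and since $\mu$ is a probability measure the integral is bounded by the supremum of the integrand, which is exactly $\bigl\|(x_{j}^{(l)})_{j,l}\bigr\|_{\text{semi},p}^{p}$ in the notation of the definition.

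For the non-trivial direction, assume $T$ is $p$-semi-integral with constant $C$. Let $P(K)\subset C(K)^{\ast}$ denote the (weak-star compact) set of regular Borel probability measures on $K$. For each $m\in\mathbb{N}$ and each choice of vectors $(x_{j}^{(l)})$ define $g\colon P(K)\to\mathbb{R}$ by
\[
g(\rho)=\sum_{j=1}^{m}\Bigl[\|T(x_{j}^{(1)},\ldots,x_{j}^{(n)})\|^{p}-C^{p}\int_{K}|\varphi_{1}(x_{j}^{(1)})\cdots\varphi_{n}(x_{j}^{(n)})|^{p}\,d\rho\Bigr],
\]
and let $\mathcal{F}$ be the family of all such $g$. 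Each $g$ is affine and weak-star continuous in $\rho$ (hence convex), and the collection $\mathcal{F}$ is stable under convex combinations on the index side, i.e.\ it is concave, because the vectors $(x_{j}^{(l)})$ can be merged. For a fixed $g\in\mathcal{F}$ choose, by Weierstrass and the compactness of $K$, a point $(\varphi_{1}^{0},\ldots,\varphi_{n}^{0})\in K$ at which
\[
\sum_{j=1}^{m}|\varphi_{1}(x_{j}^{(1)})\cdots\varphi_{n}(x_{j}^{(n)})|^{p}
\]
attains its supremum. Setting $\rho_{g}$ to be the Dirac mass at that point and invoking the semi-integral inequality for $T$ gives $g(\rho_{g})\le 0$. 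Ky Fan's Lemma (exactly as used in the proof of Theorem \ref{ppk}) then produces a single $\mu\in P(K)$ with $g(\mu)\le 0$ for every $g\in\mathcal{F}$. Specializing to $m=1$ and a single vector $(x_{1},\ldots,x_{n})$ yields the desired domination.

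The main obstacle, and the only point that requires a little care beyond the linear template, is verifying the product-compactness set-up: namely that $K$ is a compact Hausdorff space in the product weak-star topology and that the integrand is jointly continuous on $K$, so that the integrals defining $g$ are well-behaved, and that the family $\mathcal{F}$ genuinely satisfies the concavity hypothesis of Ky Fan's Lemma when different finite systems of vectors are combined. Once this is set up, the rest of the argument is a verbatim transcription of the linear Pietsch Domination proof.
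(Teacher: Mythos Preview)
Your proposal is correct and follows exactly the approach intended by the paper: the paper does not give its own proof of this statement but explicitly points to \cite{CD} (and to the abstract framework of \cite{BPRn}), where the argument is precisely the transcription of the linear Pietsch Domination proof (Theorem~\ref{ppk}) with $B_{E^{\ast}}$ replaced by the compact product $K=B_{E_{1}^{\ast}}\times\cdots\times B_{E_{n}^{\ast}}$ and Ky Fan's Lemma applied to the family $\mathcal{F}$ you describe. The only minor point worth making explicit is that the concavity of $\mathcal{F}$ uses not just concatenation but also the homogeneity trick $(x_{j}^{(1)})\mapsto(\lambda^{1/p}x_{j}^{(1)})$ to obtain $\lambda g\in\mathcal{F}$ for $0\le\lambda\le 1$, after which concatenation gives genuine closure under convex combinations; otherwise your outline is complete.
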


\begin{corollary}
If $1\leq p\leq q<\infty,$ then $\mathcal{L}_{si,p}\subset\mathcal{L}_{si,q}.$
\end{corollary}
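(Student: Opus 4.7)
The plan is to derive the inclusion directly from the Pietsch-type domination theorem for semi-integral operators stated just above the corollary, in complete analogy with the standard proof of the Inclusion Theorem for absolutely summing linear operators (which itself is noted in the excerpt as being a consequence of the linear Pietsch Domination Theorem via the canonical inclusions of $L_p$ spaces).

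First I would take $T \in \mathcal{L}_{si,p}(E_1,\ldots,E_n;F)$ and invoke the domination theorem to produce a constant $C \geq 0$ and a regular probability measure $\mu$ on the compact set $K := B_{E_1^*} \times \cdots \times B_{E_n^*}$ (with the product of weak-star topologies) such that
\[
\|T(x_1,\ldots,x_n)\| \leq C \left(\int_{K} |\varphi_1(x_1)\cdots\varphi_n(x_n)|^{p}\, d\mu(\varphi_1,\ldots,\varphi_n)\right)^{1/p}
\]
for all $x_j \in E_j$. Next, since $\mu$ is a probability measure and $1 \leq p \leq q < \infty$, the monotonicity of $L^r(\mu)$-norms in $r$ (a direct application of Hölder's inequality with exponents $q/p$ and $(q/p)^*$) gives
\[
\left(\int_{K} |\varphi_1(x_1)\cdots\varphi_n(x_n)|^{p}\, d\mu\right)^{1/p} \leq \left(\int_{K} |\varphi_1(x_1)\cdots\varphi_n(x_n)|^{q}\, d\mu\right)^{1/q}.
\]
Combining the two inequalities yields a $q$-domination inequality for $T$ with the same constant $C$ and the same measure $\mu$. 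Applying the converse direction of the Pietsch-type domination theorem for semi-integral operators then shows $T \in \mathcal{L}_{si,q}(E_1,\ldots,E_n;F)$, proving the inclusion.

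There is no real obstacle here: the only step beyond a formal reference is the monotonicity of $L^r(\mu)$-norms, which is immediate from Hölder on a probability space. One could alternatively give a measure-free proof by working directly with the defining inequality and using the elementary fact that $\|(a_j)\|_{\ell^q} \leq \|(a_j)\|_{\ell^p}$ for $p \leq q$ together with a truncation/normalization argument, but routing through the domination theorem is cleaner and mirrors exactly the analogous corollary stated for $p$-dominated multilinear operators in the previous subsection.
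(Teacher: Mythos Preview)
Your proposal is correct and follows exactly the approach the paper intends: the corollary is stated without proof immediately after the Pietsch-type domination theorem for semi-integral operators, and (as with the analogous corollary for $p$-dominated operators earlier in the paper, which is explicitly attributed to ``the canonical inclusions from $L_p$ spaces'') the implicit argument is precisely the $L^p(\mu) \hookrightarrow L^q(\mu)$ monotonicity on a probability space that you wrote out.
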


It is well-known that, as it happens with the ideal of $p$-dominated
multilinear operators, this ideal satisfies a Dvoretzky-Rogers type theorem.

This \textquotedblleft size\textquotedblright\ of this class is strongly
connected to the \textquotedblleft size\textquotedblright\ of the class of
$p$-dominated multilinear operators. For example, in \cite{CD} it is shown
that
\begin{equation}
\mathcal{L}_{si,p}(E_{1},...,E_{n};F)\subset\mathcal{L}_{d,np}(E_{1}%
,...,E_{n};F). \label{ww}%
\end{equation}

In fact, if $T\in\mathcal{L}_{si,p}(E_{1},...,E_{n};F)$ then
\begin{align*}
\left(  \sum\limits_{j=1}^{\infty}\parallel T(x_{1}^{(j)},...,x_{n}%
^{(j)})\parallel^{p}\right)  ^{1/p}  &  \leq C\left(  \sup_{\varphi_{l}\in
B_{E_{l}^{\ast}},l=1,...,n}\sum\limits_{j=1}^{\infty}\mid\varphi_{1}%
(x_{1}^{(j)})...\varphi_{n}(x_{n}^{(j)})\mid^{p}\right)  ^{1/p}\\
&  \leq C\sup_{\varphi_{l}\in B_{E_{l}^{\ast}},l=1,...,n}\left(
\sum\limits_{j=1}^{\infty}\mid\varphi_{1}(x_{1}^{(j)})\mid^{np}\right)
^{\frac{1}{np}}...\left(  \sum\limits_{j=1}^{\infty}\mid\varphi_{n}%
(x_{n}^{(j)})\mid^{np}\right)  ^{\frac{1}{np}}\\
&  =C\left\Vert (x_{1}^{(j)})_{j=1}^{\infty}\right\Vert _{w,np}...\left\Vert
(x_{n}^{(j)})_{j=1}^{\infty}\right\Vert _{w,np}.
\end{align*}
\bigskip

In view of the \textquotedblleft small size\textquotedblright\ of the class of
$p$-dominated multilinear operators, the inclusion (\ref{ww}) might be viewed
as a bad property.

\subsection{Strongly summing multilinear operators}

If $p\geq1,$ $T\in\mathcal{L}(E_{1},...,E_{n};F)$ is strongly $p$-summing
($T\in\mathcal{L}_{ss,p}(E_{1},...,E_{n};F)$) if there exists a constant
$C\geq0$ such that
\begin{equation}
\left(  \sum\limits_{j=1}^{m}\parallel T(x_{j}^{(1)},...,x_{j}^{(n)}%
)\parallel^{p}\right)  ^{1/p}\leq C\left(  \underset{\phi\in B_{\mathcal{L}%
(E_{1},...,E_{n};\mathbb{K})}}{\sup}\sum\limits_{j=1}^{m}\mid\phi(x_{j}%
^{(1)},...,x_{j}^{(n)})\mid^{p}\right)  ^{1/p}. \label{in}%
\end{equation}
for every $m\in\mathbb{N}$, $x_{j}^{(l)}\in E_{l}$ with $l=1,...,n$ and
$j=1,...,m.$

The multi-ideal of strongly $p$-summing multilinear operators is due to V.
Dimant \cite{dimant} is perhaps the class that best translates to the
multilinear setting the properties of the original linear concept. For
example, a Grothendieck type theorem and a Pietsch-Domination type theorem are valid:

\begin{theorem}
[Dimant, 2003](\cite{dimant}) Every $T\in\mathcal{L}(^{n}\ell_{1};\ell_{2})$
is strongly $1$-summing.
\end{theorem}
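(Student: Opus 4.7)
The plan is to reduce the statement to the classical Grothendieck $\ell_1$-$\ell_2$ theorem via linearization on the projective tensor product, mirroring the technique used in the proof sketch of Theorem \ref{novoteorema}. The key structural facts we need are already in the paper: first, $\widehat{\otimes}_{\pi}^{n}\ell_{1}$ is isometrically isomorphic to $\ell_{1}$; second, Grothendieck's Theorem (Theorem \ref{kyp}) says that every operator in $\mathcal{L}(\ell_{1},\ell_{2})$ is absolutely $1$-summing; third, the universal property of the projective tensor product gives an isometric identification of $\mathcal{L}(^{n}\ell_{1};\mathbb{K})$ with the dual of $\widehat{\otimes}_{\pi}^{n}\ell_{1}$.

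First I would associate to each $T\in\mathcal{L}(^{n}\ell_{1};\ell_{2})$ its linearization $T_{L}\in\mathcal{L}(\widehat{\otimes}_{\pi}^{n}\ell_{1};\ell_{2})$, characterized by $T_{L}(x^{(1)}\otimes\cdots\otimes x^{(n)})=T(x^{(1)},\ldots,x^{(n)})$ with $\Vert T_{L}\Vert=\Vert T\Vert$. Since $\widehat{\otimes}_{\pi}^{n}\ell_{1}\cong\ell_{1}$ isometrically, Theorem \ref{kyp} applies and yields a constant $C:=\pi_{1}(T_{L})<\infty$ such that
\[
\sum_{j=1}^{m}\Vert T_{L}(z_{j})\Vert_{\ell_{2}}\leq C\,\Vert(z_{j})_{j=1}^{m}\Vert_{w,1}
\]
for every finite sequence $(z_{j})_{j=1}^{m}$ in $\widehat{\otimes}_{\pi}^{n}\ell_{1}$.

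Next I would specialize this inequality to elementary tensors $z_{j}=x_{j}^{(1)}\otimes\cdots\otimes x_{j}^{(n)}$. The left-hand side becomes $\sum_{j}\Vert T(x_{j}^{(1)},\ldots,x_{j}^{(n)})\Vert_{\ell_{2}}$. For the right-hand side, I use the isometric identification between $(\widehat{\otimes}_{\pi}^{n}\ell_{1})^{\ast}$ and $\mathcal{L}(^{n}\ell_{1};\mathbb{K})$: each $\psi$ in the unit ball of the dual corresponds to some $\phi\in B_{\mathcal{L}(^{n}\ell_{1};\mathbb{K})}$ with $\psi(x^{(1)}\otimes\cdots\otimes x^{(n)})=\phi(x^{(1)},\ldots,x^{(n)})$, so
\[
\Vert(x_{j}^{(1)}\otimes\cdots\otimes x_{j}^{(n)})_{j=1}^{m}\Vert_{w,1}=\sup_{\phi\in B_{\mathcal{L}(^{n}\ell_{1};\mathbb{K})}}\sum_{j=1}^{m}|\phi(x_{j}^{(1)},\ldots,x_{j}^{(n)})|.
\]
Combining these two identities with the $(1;1)$-summing inequality for $T_{L}$ delivers exactly the defining inequality \eqref{in} for strong $1$-summability, with constant $C=\pi_{1}(T_{L})\leq K_{G}\Vert T\Vert$.

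Essentially no step is a real obstacle here; the argument is entirely formal once the two identifications $\widehat{\otimes}_{\pi}^{n}\ell_{1}\cong\ell_{1}$ and $(\widehat{\otimes}_{\pi}^{n}\ell_{1})^{\ast}\cong\mathcal{L}(^{n}\ell_{1};\mathbb{K})$ are in place. The only point requiring a little care is verifying that the weak-$1$ norm of the tensor sequence really coincides with the supremum over multilinear forms; this is a direct application of the universal property, but it is the place where the strongly summing norm (which uses $\mathcal{L}(^{n}E_{1},\ldots,E_{n};\mathbb{K})$) meets the absolutely summing norm on the linearization (which uses $(\widehat{\otimes}_{\pi}^{n}\ell_{1})^{\ast}$), so it is the conceptual heart of the proof.
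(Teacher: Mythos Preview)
Your proof is correct. The paper does not supply its own proof of this theorem---it is simply quoted from \cite{dimant}---so there is nothing to compare against directly; but your argument is essentially Dimant's original one, and it is also implicit in the paper's surrounding material: the Pietsch Domination Theorem for $\mathcal{L}_{ss,p}$ stated immediately afterwards is precisely the statement that $T$ is strongly $p$-summing if and only if its linearization $T_{L}$ on $\widehat{\otimes}_{\pi}^{n}E_{j}$ is absolutely $p$-summing, and combining that equivalence with $\widehat{\otimes}_{\pi}^{n}\ell_{1}\cong\ell_{1}$ and Theorem~\ref{kyp} is exactly what you do.
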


\begin{theorem}
[Dimant, 2003](\cite{dimant}) $T\in\mathcal{L}\left(  E_{1},...,E_{n}%
;F\right)  $ is strongly $p$-summing if, and only if, there are a probability
measure $\mu$ on $B_{(E_{1}\otimes_{\pi}\cdots\otimes_{\pi}E_{n})^{\ast}}$,
with the weak-star topology, and a constant $C\geq0$ so that
\begin{equation}
\left\Vert T\left(  x_{1},...,x_{n}\right)  \right\Vert \leq C\left(
\int_{B_{(E_{1}\otimes_{\pi}\cdots\otimes_{\pi}E_{n})^{\ast}}}\left\vert
\varphi\left(  x_{1}\otimes\cdots\otimes x_{n}\right)  \right\vert ^{p}%
d\mu\left(  \varphi\right)  \right)  ^{\frac{1}{p}} \label{7out08c}%
\end{equation}
for all $(x_{1},...,x_{n})\in E_{1}\times\cdots\times E_{n},$
\end{theorem}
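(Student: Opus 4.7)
The plan is to mimic the proof of the linear Pietsch Domination Theorem \ref{ppk}, transferred to the projective tensor product. The key observation is that by the universal property of the projective norm, the correspondence $\phi(x_1,\ldots,x_n)\leftrightarrow\varphi(x_1\otimes\cdots\otimes x_n)$ identifies $\mathcal{L}(E_1,\ldots,E_n;\mathbb{K})$ isometrically with $(E_1\otimes_\pi\cdots\otimes_\pi E_n)^{\ast}$. Writing $K:=B_{(E_1\otimes_\pi\cdots\otimes_\pi E_n)^{\ast}}$ equipped with its weak-star topology (compact by Banach-Alaoglu) and $z_j:=x_j^{(1)}\otimes\cdots\otimes x_j^{(n)}$, the defining inequality (\ref{in}) of strong $p$-summability becomes
$$\sum_{j=1}^{m}\|T(x_j^{(1)},\ldots,x_j^{(n)})\|^{p}\leq C^{p}\sup_{\varphi\in K}\sum_{j=1}^{m}|\varphi(z_j)|^{p}.$$
This is exactly a multilinear Pietsch-type inequality formulated on elementary tensors; the task is to promote it to a single probability measure on $K$.

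The direction $(\Leftarrow)$ is routine: raising (\ref{7out08c}) to the $p$-th power, summing over $j$, and using monotonicity of the integral plus the trivial bound by the sup of the integrand recovers the displayed inequality. For the nontrivial direction $(\Rightarrow)$, let $P(K)\subset C(K)^{\ast}$ denote the compact convex set of regular Borel probability measures on $K$ in the weak-star topology, and for each finite family $(x_j^{(1)},\ldots,x_j^{(n)})_{j=1}^{m}$ define
$$g(\rho):=\sum_{j=1}^{m}\left[\|T(x_j^{(1)},\ldots,x_j^{(n)})\|^{p}-C^{p}\int_{K}|\varphi(z_j)|^{p}\,d\rho(\varphi)\right],$$
and let $\mathcal{F}$ be the collection of all such $g$. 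Each $g$ is affine and weak-star continuous on $P(K)$ because $\varphi\mapsto|\varphi(z_j)|^{p}$ is continuous on $K$; and $\mathcal{F}$ is closed under positive linear combinations (rescaling a single factor $x_j^{(k)}$ by $\lambda^{1/p}$ rescales the corresponding summand of $g$ by $\lambda$, using the multilinearity of $T$), hence concave in the Ky Fan sense. By compactness of $K$, the function $\varphi\mapsto\sum_{j}|\varphi(z_j)|^{p}$ attains its supremum at some $\varphi_0\in K$, so $\mu_g:=\delta_{\varphi_0}$ satisfies $g(\mu_g)\leq 0$ by the inequality of the first paragraph.

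Ky Fan's Lemma (see \cite[page 40]{mono}) then produces a single $\mu\in P(K)$ with $g(\mu)\leq 0$ for every $g\in\mathcal{F}$, and specializing to the singleton family ($m=1$) with arbitrary $(x_1,\ldots,x_n)\in E_1\times\cdots\times E_n$ yields the domination (\ref{7out08c}). The main obstacle is really of a bookkeeping nature: one must confirm that the continuity of the integrands on $K$ and the concavity of $\mathcal{F}$ genuinely survive the passage from scalar-valued linear functionals to multilinear forms on elementary tensors. Both verifications use only the compatibility of the identification $\phi\leftrightarrow\varphi$ with the projective tensor structure; once they are in place, the remainder of the argument parallels the linear Pietsch Theorem \ref{ppk} essentially verbatim.
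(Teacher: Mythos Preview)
Your proof is correct. The paper itself does not supply a proof of this theorem---it merely states the result and refers to Dimant's original article \cite{dimant}---so there is no in-paper argument to compare against directly. That said, your approach (transporting the Ky Fan separation argument of the linear Pietsch Domination Theorem~\ref{ppk} to the compact set $K=B_{(E_{1}\otimes_{\pi}\cdots\otimes_{\pi}E_{n})^{\ast}}$ via the isometric identification $\mathcal{L}(E_{1},\ldots,E_{n};\mathbb{K})\cong(E_{1}\otimes_{\pi}\cdots\otimes_{\pi}E_{n})^{\ast}$) is exactly the standard route, and it mirrors step for step the sketch the paper does give for the linear case; your handling of the concavity of $\mathcal{F}$ via rescaling one tensor factor by $\lambda^{1/p}$ is the correct multilinear substitute for the linear homogeneity used there.
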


The following intriguing result shows that in special situations the class of
strongly $p$-summing multilinear maps contains the ideal of multiple
$p$-summing operators:

\begin{theorem}
[Mezrag, Saadi, 2009](\cite{mss}) Let $1<p<\infty.$ If $E_{j}$ is an
$\mathcal{L}_{p}$-space for all $j=1,...,n$ and $F$ is an $\mathcal{L}%
_{p^{\ast}}$-space, then%
\[
\mathcal{L}_{m,p^{\ast}}(E_{1},...,E_{n};F)\subset\mathcal{L}_{ss,p^{\ast}%
}(E_{1},...,E_{n};F).
\]
\bigskip
\end{theorem}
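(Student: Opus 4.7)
My plan is to derive the strongly $p^{\ast}$-summing inequality for $T$ from its multiple $p^{\ast}$-summing hypothesis, using the local $\mathcal{L}_{p}$-structure of the domain factors together with the $\mathcal{L}_{p^{\ast}}$-structure of the range to convert the product-of-weak-norms bound (given) into the single-supremum bound (needed). Explicitly, the target inequality is
$$\Bigl(\sum_{j}\|T(x_{j}^{(1)},\ldots,x_{j}^{(n)})\|^{p^{\ast}}\Bigr)^{1/p^{\ast}}\le C\sup_{\phi\in B_{\mathcal{L}(E_{1},\ldots,E_{n};\mathbb{K})}}\Bigl(\sum_{j}|\phi(x_{j}^{(1)},\ldots,x_{j}^{(n)})|^{p^{\ast}}\Bigr)^{1/p^{\ast}}\qquad(\star)$$
for every finite family in $E_{1}\times\cdots\times E_{n}$, and the obvious bound of the diagonal sum by the full $n$-fold sum of multiple summing is too lossy, since $\prod_{k}\|(x_{j}^{(k)})\|_{w,p^{\ast}}$ can strictly exceed the right side of $(\star)$.

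I would argue by induction on $n$, mimicking the peeling-off approach in the proof of Theorem~\ref{teorema}. For the base case $n=2$, given sequences $(x_{j})\subset E_{1}$ and $(y_{j})\subset E_{2}$, the hypothesis that $F$ is $\mathcal{L}_{p^{\ast}}$ (locally $\ell_{p^{\ast}}^{M}$) allows me to realize each $\|T(x_{j},y_{j})\|$ as a pairing $\langle T(x_{j},y_{j}),z_{j}\rangle$ with $z_{j}\in B_{F^{\ast}}$, turning the left side of $(\star)$ into $\sum_{j}|\phi_{z_{j}}(x_{j},y_{j})|^{p^{\ast}}$ where $\phi_{z_{j}}\colon(x,y)\mapsto\langle T(x,y),z_{j}\rangle$ is a bilinear form of norm at most $\|T\|$. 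After localizing each $E_{k}$ to $\ell_{p}^{N_{k}}$ and absorbing the $\mathcal{L}_{p}$/$\mathcal{L}_{p^{\ast}}$-distortions into the constant, a H\"older/Khintchine-type rearrangement in the $\ell_{p}^{N_{k}}$-coordinates, combined with the multiple $p^{\ast}$-summing hypothesis, should bound $\sum_{j}|\phi_{z_{j}}(x_{j},y_{j})|^{p^{\ast}}$ by the single supremum on the right of $(\star)$ times a constant independent of $N_{k}$ and $M$. The induction step then passes from $n$ to $n+1$ by repeating the construction with the first factor kept as $E_{1}$ and the remaining $n$ factors bundled into an $(n-1)$-linear mapping valued in an $\mathcal{L}_{p^{\ast}}$-space, exactly as in the proof of Theorem~\ref{teorema}.

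The main obstacle is the rearrangement step in the base case. For arbitrary Banach space factors the product $\prod_{k}\|(x_{j}^{(k)})\|_{w,p^{\ast}}$ strictly dominates the single supremum on the right of $(\star)$, so the proof must exploit both the concrete $\ell_{p}^{N_{k}}$-coordinate structure of the $E_{k}$ and the self-duality of the $(p,p^{\ast})$-pairing to absorb the ``off-diagonal'' portion of the multiple summing estimate into the witnessing multilinear form $\phi_{z_{j}}$. Without the simultaneous $\mathcal{L}_{p}$-structure of the factors and $\mathcal{L}_{p^{\ast}}$-structure of $F$ this reduction is not available, which is why the theorem is stated under exactly these hypotheses.
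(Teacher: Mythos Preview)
The paper does not supply a proof of this theorem; it is simply quoted from \cite{mss} and placed in the survey without argument. So there is no ``paper's own proof'' to compare against, and your proposal must be judged on its own merits.

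On that score, the proposal does not yet constitute a proof. You correctly identify the target inequality $(\star)$ and correctly observe that the crude bound $\sum_{j}\|T(x_{j}^{(1)},\ldots,x_{j}^{(n)})\|^{p^{\ast}}\le\sum_{j_{1},\ldots,j_{n}}\|T(x_{j_{1}}^{(1)},\ldots,x_{j_{n}}^{(n)})\|^{p^{\ast}}$ followed by the multiple-summing hypothesis gives only the product $\prod_{k}\|(x_{j}^{(k)})\|_{w,p^{\ast}}$, which in general strictly dominates the right-hand side of $(\star)$. But the step you label ``a H\"older/Khintchine-type rearrangement in the $\ell_{p}^{N_{k}}$-coordinates'' is precisely the entire content of the theorem, and you have not indicated what inequality is being invoked, which quantities are being rearranged, or why the $\mathcal{L}_{p}$/$\mathcal{L}_{p^{\ast}}$ hypotheses make it go through. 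Writing $\|T(x_{j},y_{j})\|=\langle T(x_{j},y_{j}),z_{j}\rangle$ with $z_{j}\in B_{F^{\ast}}$ is harmless but does not by itself bring the multiple-summing bound any closer to $(\star)$.

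The appeal to the proof of Theorem~\ref{teorema} is also misplaced. That argument lifts \emph{linear} coincidence hypotheses of the form $\mathcal{L}(E;F)=\Pi_{q;p}(E;F)$ and $\mathcal{L}(E;\ell_{q}(F))=\Pi_{q;r}(E;\ell_{q}(F))$ to a multilinear coincidence; the induction works because freezing all but one variable produces an operator to which a linear hypothesis applies directly. Here you have no linear coincidence hypothesis at all: you only know that the given $T$ is multiple $p^{\ast}$-summing, and freezing variables in such a $T$ yields operators that are $p^{\ast}$-summing in each slot separately, which is not what the right-hand side of $(\star)$ calls for. Your proposed induction step (``bundle the remaining $n$ factors into an $(n-1)$-linear mapping valued in an $\mathcal{L}_{p^{\ast}}$-space'') also presumes that the intermediate target space is again $\mathcal{L}_{p^{\ast}}$, which you have not justified. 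In short, both the base case and the induction mechanism remain to be supplied; what you have is a statement of the difficulty rather than its resolution. If you wish to pursue this, the original argument in \cite{mss} goes through the notion of Cohen strongly summing multilinear operators and a Kwapie\'n-type factorization specific to the $\mathcal{L}_{p}$--$\mathcal{L}_{p^{\ast}}$ duality; that is where the hypotheses on $E_{j}$ and $F$ genuinely enter.
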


It is not hard to prove that a Dvoretzky-Rogers Theorem is also valid for this
class. Besides, the class has a nice size in the sense that no coincidence
theorem can hold for $n$-linear maps if there is no analogue for linear
operators. This indicates that this class is not \textquotedblleft
unnecessarily big\textquotedblright.

\subsection{Absolutely\emph{ }summing multilinear operators}

If $\frac{1}{p}\leq\frac{1}{q_{1}}+\cdots+\frac{1}{q_{n}},$ $T\in
\mathcal{L}(E_{1},...,E_{n};F)$ is absolutely\emph{ }$(p;q_{1},...,q_{n}%
)$-summing at the point $a=(a_{1},...,a_{n})\in E_{1}\times\cdots\times E_{n}$
when
\[
\left(  T(a_{1}+x_{j}^{(1)},...,a_{n}+x_{j}^{(n)})-T(a_{1},...,a_{n})\right)
_{j=1}^{\infty}\in\ell_{p}(F)
\]
for every $\left(  x_{j}^{(k)}\right)  _{j=1}^{\infty}\in\ell_{q_{k}}%
^{w}(E_{k}).$ This class is denoted by $\mathcal{L}_{as,(p;q_{1},...,q_{n}%
)}^{(a)}.$ When $a$ is the origin call simply absolutely $(p;q_{1},...,q_{n}%
)$-summing and represent by $\mathcal{L}_{as,(p;q_{1},...,q_{n})}$ (when
$q_{1}=\cdots=q_{n}=q$ we write $\mathcal{L}_{as,(p;q)}$ and when
$q_{1}=...=q_{n}=q=p$ we just write $\mathcal{L}_{as,p}$)$.$ In the case that
$T$ is absolutely $(p;q_{1},...,q_{n})$-summing at every $(a_{1},...,a_{n})\in
E_{1}\times\cdots\times E_{n}$ we say that $T$ is absolutely $p$-summing
everywhere and we write $T\in\mathcal{L}_{as,(p;q_{1},...,q_{n})}^{ev}%
(E_{1},...,E_{n};F)$ (when $q_{1}=\cdots=q_{n}=q$ we write $\mathcal{L}%
_{as,(p;q)}^{ev}$ and when $q_{1}=...=q_{n}=q=p$ we just write $\mathcal{L}%
_{as,p}^{ev}$)$.$

The class of absolutely $(p;q_{1},...,q_{n})$-summing operators (when $a=0$)
seems to have appeared for the first time in \cite{AlencarMatos}. The starting
point of the theory of absolutely\emph{ }summing is perhaps the result due to
A. Defant and J. Voigt (see \cite{AlencarMatos}), known as Defant-Voigt
Theorem, which asserts that every continuous multilinear form is
$(1;1,...,1)$-summing. We prove here a slightly more general version which can
be found in (\cite{port}):

\begin{theorem}
[The generalized Defant-Voigt Theorem, 2007]\label{t1}Let $A\in\mathcal{L}%
(E_{1},...,E_{n};F)$ and suppose that there exist $1\leq r<n$ and $C>0$ so
that for any $x_{1}\in E_{1},....,x_{r}\in E_{r},$ the $s$-linear ($s=n-r$)
mapping $A_{x_{1}....x_{r}}(x_{r+1},...,x_{n})=A(x_{1},...,x_{n})$ is
absolutely $(p;q_{1},...,q_{s})$-summing and
\[
\left\Vert A_{x_{1}....x_{r}}\right\Vert _{as(p;q_{1},...,q_{s})}\leq
C\left\Vert A\right\Vert \left\Vert x_{1}\right\Vert ...\left\Vert
x_{r}\right\Vert .
\]
Then $A$ is absolutely $(p;1,...,1,q_{1},...,q_{s})$-summing. In particular%
\[
\mathcal{L}(E_{1},...,E_{n};\mathbb{K})=\mathcal{L}_{as,1}(E_{1}%
,...,E_{n};\mathbb{K})
\]

\end{theorem}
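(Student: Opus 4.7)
My plan is induction on $r$, with the substance concentrated in the base case $r=1$; the \emph{In particular} assertion drops out from the main statement by taking $r=n-1$ and $p=q_{1}=1$, since for every fixed $x_{1},\dots,x_{n-1}$ the linear functional $A_{x_{1}\cdots x_{n-1}}\in E_{n}^{\ast}$ is $(1;1)$-summing with $\pi_{1}=\Vert A_{x_{1}\cdots x_{n-1}}\Vert\leq\Vert A\Vert\Vert x_{1}\Vert\cdots\Vert x_{n-1}\Vert$ by the linear Defant--Voigt fact, so the hypothesis holds with $C=1$.

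For the base case, fix finite sequences $(x_{j}^{(1)})_{j=1}^{m}\in\ell_{1}^{w}(E_{1})$ and $(x_{j}^{(k)})_{j=1}^{m}\in\ell_{q_{k-1}}^{w}(E_{k})$ for $k\geq 2$, and encode them in two operators. First, the continuous linear operator $V\colon E_{1}\to\ell_{p}^{m}(F)$ defined by $V(x):=(A(x,x_{j}^{(2)},\dots,x_{j}^{(n)}))_{j=1}^{m}$ has norm bounded by $C\Vert A\Vert\prod_{k=1}^{n-1}\Vert(x_{j}^{(k+1)})\Vert_{w,q_{k}}$ via a direct application of the hypothesis to $A_{x}$. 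Second, $(x_{j}^{(1)})\in\ell_{1}^{w}(E_{1})$ is equivalent to the boundedness of $T\colon c_{0}^{m}\to E_{1}$, $T(e_{j}):=x_{j}^{(1)}$, with $\Vert T\Vert=\Vert(x_{j}^{(1)})\Vert_{w,1}$. The duality $\ell_{p}^{m}(F)^{\ast}=\ell_{p^{\ast}}^{m}(F^{\ast})$ then gives
\[
\Bigl(\sum_{j}\Vert A(x_{j}^{(1)},\dots,x_{j}^{(n)})\Vert^{p}\Bigr)^{1/p}=\sup_{\gamma}\Bigl\vert\sum_{j}\gamma_{j}\bigl(V(x_{j}^{(1)})_{j}\bigr)\Bigr\vert
\]
over $\gamma=(\gamma_{j})\in(F^{\ast})^{m}$ with $\sum\Vert\gamma_{j}\Vert^{p^{\ast}}\leq 1$. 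For each such $\gamma$, H\"older's inequality yields a contraction $\Psi\colon\ell_{p}^{m}(F)\to\ell_{1}^{m}$ given by $\Psi((v_{k}))_{k}:=\gamma_{k}(v_{k})$, and a direct computation shows $\sum_{j}\gamma_{j}(V(x_{j}^{(1)})_{j})=\operatorname{tr}(\Psi VT)$ for the composition $\Psi VT\colon c_{0}^{m}\to\ell_{1}^{m}$.

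The decisive estimate is the classical trace bound $\vert\operatorname{tr}(M)\vert\leq\Vert M\Vert_{c_{0}^{m}\to\ell_{1}^{m}}$, which follows from the Rademacher identity $M_{jj}=\mathbb{E}_{\epsilon}[(M\epsilon)_{j}\epsilon_{j}]$ combined with $\vert\langle M\epsilon,\epsilon\rangle\vert\leq\Vert M\epsilon\Vert_{1}\Vert\epsilon\Vert_{\infty}\leq\Vert M\Vert$. Applied to $M=\Psi VT$, it delivers the desired dimension-free estimate $\vert\operatorname{tr}(\Psi VT)\vert\leq\Vert V\Vert\Vert T\Vert$ uniformly in $\gamma$, closing the base case. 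For the induction step $r-1\Rightarrow r$, I fix $x_{1},\dots,x_{r-1}$ and apply the base case to the $(s+1)$-linear map $A_{x_{1}\cdots x_{r-1}}$: each slot-$r$ restriction $A_{x_{1}\cdots x_{r}}$ is $(p;q_{1},\dots,q_{s})$-summing with norm bounded by $(C\Vert A\Vert\Vert x_{1}\Vert\cdots\Vert x_{r-1}\Vert)\Vert x_{r}\Vert$, so the base case makes $A_{x_{1}\cdots x_{r-1}}$ $(p;1,q_{1},\dots,q_{s})$-summing with norm bounded by $C\Vert A\Vert\Vert x_{1}\Vert\cdots\Vert x_{r-1}\Vert$, and this matches exactly the hypothesis of the theorem for $A$ with $r-1$ fixed slots and the exponent list $(1,q_{1},\dots,q_{s})$, so the induction hypothesis concludes. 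The point I expect to need the most care with is the targeting of $\Psi$: the trace inequality fails already for $\operatorname{id}\colon c_{0}^{m}\to c_{0}^{m}$, so H\"older must be applied so as to land $\Psi$ in $\ell_{1}^{m}$ (and not, say, $\ell_{\infty}^{m}$), which is exactly what produces the dimension-free bound.
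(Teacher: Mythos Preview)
Your argument is correct. Both proofs rest on the same underlying device---Rademacher orthogonality to isolate diagonal terms---but the packaging is genuinely different. The paper handles all $r$ frozen slots in one shot: it writes the $\ell_{p}$-norm via duality with scalars $b_{j}$, then integrates the product $\prod_{l=1}^{r}r_{j}(t_{l})$ over $[0,1]^{r}$ so that the orthogonality relations $\int r_{j}r_{k}=\delta_{jk}$ collapse the multi-sum to the diagonal, and finally estimates the integrand pointwise using the hypothesis on $A_{z_{1}\cdots z_{r}}$ with $z_{l}=\sum_{j}r_{j}(t_{l})x_{l}^{(j)}$. Your route instead encapsulates the single-variable case in the operator-theoretic statement $\lvert\operatorname{tr}(M)\rvert\leq\Vert M\Vert_{c_{0}^{m}\to\ell_{1}^{m}}$ (whose Rademacher proof is exactly the paper's trick for $r=1$), reads the weakly $1$-summable sequence as an operator $T\colon c_{0}^{m}\to E_{1}$, and then iterates via induction on $r$. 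What you gain is modularity: the trace bound is a clean reusable lemma, and the induction makes the role of each frozen slot transparent. What the paper's direct computation gains is that it avoids setting up the induction and gives the constant in a single stroke. Your remark about landing $\Psi$ in $\ell_{1}^{m}$ rather than $\ell_{\infty}^{m}$ is exactly the point: the trace estimate is specific to the pair $(c_{0},\ell_{1})$, and this is the operator-theoretic counterpart of the paper's use of $\Vert z_{l}\Vert\leq\Vert(x_{l}^{(j)})\Vert_{w,1}$ for $z_{l}=\sum_{j}\pm x_{l}^{(j)}$.
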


\begin{proof}
(Sketch) Given $m\in\mathbb{N}$ and $x_{1}^{(1)},...,x_{1}^{(m)}\in
E_{1},....,x_{n}^{(1)},...,x_{n}^{(m)}\in E_{n}$, let us consider $\varphi
_{j}\in B_{F^{\prime}}$ such that $\left\Vert A(x_{1}^{(j)},...,x_{n}%
^{(j)})\right\Vert =\varphi_{j}(A(x_{1}^{(j)},...,x_{n}^{(j)}))$ for every
$j=1,...,m$. Fix $b_{1},...,b_{m}\in\mathbb{K}$ so that $\sum\limits_{j=1}%
^{m}\left\vert b_{j}\right\vert ^{q}=1,$ where $\frac{1}{p}+\frac{1}{q}=1,$
and
\[
\left(  \sum\limits_{j=1}^{m}\left\Vert A(x_{1}^{(j)},...,x_{n}^{(j)}%
)\right\Vert ^{p}\right)  ^{\frac{1}{p}}=\left\Vert \left(  \left\Vert
A(x_{1}^{(j)},...,x_{n}^{(j)})\right\Vert \right)  _{j=1}^{m}\right\Vert
_{p}=\sum\limits_{j=1}^{m}b_{j}\left\Vert A(x_{1}^{(j)},...,x_{n}%
^{(j)})\right\Vert .
\]
If $\lambda$ is the Lebesgue measure on $I=[0,1]^{r},$ we have%
\[
\int\nolimits_{I}\sum\limits_{j=1}^{m}\left(  \prod_{l=1}^{r}r_{j}%
(t_{l})\right)  b_{j}\varphi_{j}A(\sum\limits_{j_{1}=1}^{m}r_{j_{1}}%
(t_{1})x_{1}^{(j_{1})},...,\sum\limits_{j_{r}=1}^{m}r_{j_{r}}(t_{r}%
)x_{r}^{(j_{r})},x_{r+1}^{(j)},...,x_{n}^{(j)})d\lambda
\]%
\[
=\sum\limits_{j,j_{1},...,j_{r}=1}^{m}b_{j}\varphi_{j}A(x_{1}^{(j_{1}%
)},...,x_{r}^{(j_{r})},x_{r+1}^{(j)},...,x_{n}^{(j)})\int\limits_{0}^{1}%
r_{j}(t_{1})r_{j_{1}}(t_{1})dt_{1}...\int\limits_{0}^{1}r_{j}(t_{r})r_{j_{r}%
}(t_{r})dt_{r}\hspace*{1.3em}%
\]%
\[
=\sum\limits_{j=1}^{m}\sum\limits_{j_{1}=1}^{m}...\sum\limits_{j_{r}=1}%
^{m}b_{j}\varphi_{j}A(x_{1}^{(j_{1})},...,x_{r}^{(j_{r})},x_{r+1}%
^{(j)},...,x_{n}^{(j)})\delta_{jj_{1}}...\delta_{jj_{r}}=\sum\limits_{j=1}%
^{m}b_{j}\varphi_{j}A(x_{1}^{(j)},...,x_{n}^{(j)}).
\]
For $z_{l}=\sum\limits_{j=1}^{m}r_{j}(t_{l})x_{l}^{(j)}$, $l=1,...,r,$ we get
\begin{align*}
&  \left(  \sum\limits_{j=1}^{m}\left\Vert A(x_{1}^{(j)},...,x_{n}%
^{(j)})\right\Vert ^{p}\right)  ^{\frac{1}{p}}=\sum\limits_{j=1}^{m}%
b_{j}\varphi_{j}A(x_{1}^{(j)},...,x_{n}^{(j)})\\
&  \leq\int\nolimits_{I}\left\vert \sum\limits_{j=1}^{m}\left(  \prod
_{l=1}^{r}r_{j}(t_{l})\right)  b_{j}\varphi_{j}A(\sum\limits_{j_{1}=1}%
^{m}r_{j_{1}}(t_{1})x_{1}^{(j_{1})},...,\sum\limits_{j_{r}=1}^{m}r_{j_{r}%
}(t_{r})x_{r}^{(j_{r})},x_{r+1}^{(j)},...,x_{n}^{(j)})\right\vert d\lambda
\end{align*}
and after standard calculations we get%
\[
\left(  \sum\limits_{j=1}^{m}\left\Vert A(x_{1}^{(j)},...,x_{n}^{(j)}%
)\right\Vert ^{p}\right)  ^{\frac{1}{p}}\leq C\left\Vert A\right\Vert \left(
\prod_{l=1}^{r}\left\Vert (x_{l}^{(j)})_{j=1}^{m}\right\Vert _{w,1}\right)
\left(  \prod_{l=r+1}^{n}\left\Vert (x_{l}^{(j)})_{j=1}^{m}\right\Vert
_{w,q_{l}}\right)  .\text{ }%
\]
%\end{proof}\bigskip

\end{proof}

Using a generalized version of Grothendieck's Inequality, D. P\'{e}rez
Garc\'{\i}a proved a striking generalization of Theorem \ref{yb}:

\begin{theorem}
[P\'{e}rez-Garc\'{\i}a, 2002](\cite{tesina, trace}) \label{blaw}%
$\mathcal{L}_{as,(1,2)}(^{n}c_{0};\mathbb{K})=\mathcal{L}(^{n}c_{0}%
;\mathbb{K})$ for every $n\geq2.$
\end{theorem}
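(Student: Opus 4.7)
The plan is to induct on $n$, using Theorem~\ref{yb} as the base case and replacing the bilinear Grothendieck Inequality by a multilinear generalization at the inductive step. The case $n=2$ is essentially Theorem~\ref{yb}: a bilinear form $T\in\mathcal{L}(^2c_0;\mathbb{K})$ is $2$-dominated precisely when $\bigl(T(x_k,y_k)\bigr)_k\in\ell_1$ whenever $(x_k),(y_k)\in\ell_2^w(c_0)$, which is exactly the definition of being absolutely $(1;2,2)$-summing.

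For the inductive step, given $A\in\mathcal{L}(^{n+1}c_0;\mathbb{K})$, I would reduce by density and finite-dimensional approximation to the case of $A:\ell_\infty^m\times\cdots\times\ell_\infty^m\to\mathbb{K}$ with $\|A\|\leq 1$, and finite sequences $(x_k^{(l)})_{k=1}^N$ in $\ell_\infty^m$ with $\|(x_k^{(l)})\|_{w,2}\leq 1$ for $l=1,\ldots,n+1$. Writing $x_k^{(l)}=\sum_i x_k^{(l,i)}e_i$ and $\widetilde{x}_i^{(l)}:=(x_k^{(l,i)})_{k=1}^N\in\ell_2^N$, the identity used in the proof of Theorem~\ref{yb} still gives $\|\widetilde{x}_i^{(l)}\|_2\leq \|(x_k^{(l)})\|_{w,2}\leq 1$ for every $i,l$. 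A sign trick (choose $\varepsilon_k=\pm 1$ with $\sum_k|A(x_k^{(1)},\ldots,x_k^{(n+1)})|=\sum_k\varepsilon_k A(x_k^{(1)},\ldots,x_k^{(n+1)})$ and absorb the $\varepsilon_k$ into $\widetilde{x}_{i_1}^{(1)}$) reduces the problem to bounding
\[
\Bigl|\sum_{i_1,\ldots,i_{n+1}}A(e_{i_1},\ldots,e_{i_{n+1}})\sum_{k=1}^N\widetilde{x}_{i_1}^{(1)}(k)\cdots\widetilde{x}_{i_{n+1}}^{(n+1)}(k)\Bigr|
\]
by a constant depending only on $n$.

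The key ingredient is then a multilinear extension of Grothendieck's Inequality: for any $(n+1)$-linear form $B$ on $\ell_\infty^m\times\cdots\times\ell_\infty^m$ of norm $\leq 1$ and any family of Hilbert-space unit vectors $\{u_i^{(l)}\}$, the natural multilinear pairing (the displayed expression above with $B$ and $u_i^{(l)}(k)$ in place of $A$ and $\widetilde{x}_i^{(l)}(k)$) is bounded by a constant $K_{n+1}$ depending only on $n$. Inserting this estimate together with $\|\widetilde{x}_i^{(l)}\|_2\leq\|(x_k^{(l)})\|_{w,2}$ yields the desired $(1;2,\ldots,2)$-summing inequality.

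The main obstacle is establishing this multilinear Grothendieck Inequality with a reasonable constant. For two variables it is the classical Grothendieck Inequality (Theorem~\ref{dggg}), but for more variables one needs either a careful iterative argument that feeds the inductive hypothesis back into a bilinear Grothendieck step (freezing the last $n$ variables to produce a linear operator $c_0\to\mathcal{L}(^n c_0;\mathbb{K})$ and applying the inductive estimate on the scalar-valued range), or one of the ad hoc generalizations available in the literature, in particular in P\'erez-Garc\'ia's thesis. Once this multilinear inequality is in hand, the remaining steps -- the finite-dimensional reduction, the identification $\|(x_k^{(l)})\|_{w,2}=\max_i\|\widetilde{x}_i^{(l)}\|_2$, and the sign trick -- are straightforward adaptations of the argument in Theorem~\ref{yb}.
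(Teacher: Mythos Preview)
Your outline matches the approach the paper attributes to P\'erez-Garc\'{\i}a: the paper does not give a proof here but states explicitly that the result is obtained ``using a generalized version of Grothendieck's Inequality,'' and your reduction (finite-dimensional approximation, the identification $\|(x_k^{(l)})\|_{w,2}=\max_i\|\widetilde{x}_i^{(l)}\|_2$, and the sign trick) followed by an appeal to a multilinear Grothendieck-type bound is exactly that scheme.

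One caution: the iterative alternative you sketch---freezing variables to view $A$ as a linear map $c_0\to\mathcal{L}(^nc_0;\mathbb{K})$ and then invoking the inductive hypothesis ``on the scalar-valued range''---does not close as written, because the target $\mathcal{L}(^nc_0;\mathbb{K})$ is not scalar-valued and the inductive hypothesis gives $(1;2)$-summability of each slice $A_x$ but no control on how the resulting $\ell_1$-sums vary with the frozen variable in a way that would let you sum over the remaining index. The honest route is the one you also indicate: take the multilinear Grothendieck inequality as an independent input from \cite{tesina, trace} (this is in fact equivalent to the statement that the trace class is a $Q$-algebra), and then your remaining steps go through verbatim.
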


A recent result from Blasco et al \cite{bla2} shows that the crucial cases of
Theorem \ref{blaw} are precisely the cases $n=2$ and $n=3:$

\begin{theorem}
[Blasco, Botelho, Pellegrino, Rueda, 2010]Let $1\leq r\leq2$. If
$\mathcal{L}(^{2}E;\mathbb{K})=\mathcal{L}_{as,(1,r)}(^{2}E;\mathbb{K})$ and
$\mathcal{L}(^{3}E;\mathbb{K})=\mathcal{L}_{as,(1,r)}(^{3}E;\mathbb{K})$, then%
\[
\mathcal{L}(^{n}E;\mathbb{K})=\mathcal{L}_{as,(1,r)}(^{n}E;\mathbb{K})
\]
for every $n\geq2.$
\end{theorem}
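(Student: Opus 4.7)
The plan is to argue by strong induction on $n \geq 2$, taking $n=2$ and $n=3$ as the base cases supplied by the hypothesis. Assume that $\mathcal{L}(^{k}E;\mathbb{K}) = \mathcal{L}_{as,(1,r)}(^{k}E;\mathbb{K})$ holds for every $2 \leq k \leq n$ with $n \geq 3$, and aim to deduce the coincidence at level $n+1$.

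The first move is to invoke the Open Mapping Theorem to promote each low-level coincidence to a quantitative estimate $\|T\|_{as,(1,r)} \leq C_{k}\|T\|$ with uniform constants $C_{2},\ldots,C_{n}$ independent of $T \in \mathcal{L}(^{k}E;\mathbb{K})$. Given $A \in \mathcal{L}(^{n+1}E;\mathbb{K})$, fixing a single variable $x \in E$ yields an $n$-linear form $A_{x}$ to which the inductive hypothesis applies, so $\|A_{x}\|_{as,(1,r)} \leq C_{n}\|A\|\|x\|$. The generalized Defant--Voigt theorem (Theorem~\ref{t1}) then lifts this pointwise bound to show that $A$ belongs to $\mathcal{L}_{as,(1;1,r,\ldots,r)}(^{n+1}E;\mathbb{K})$. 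Because the ideal norm for scalar-valued forms is symmetric under permutations of the summing indices, the lone $1$ can be placed in any one of the $n+1$ positions. Parallel applications of Theorem~\ref{t1} with two or three fixed variables, invoking the bilinear or trilinear hypothesis in place of the inductive one, yield analogous mixed-index estimates with $1$'s in different configurations.

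The main obstacle is upgrading these partial, mixed-index estimates to the target $(1;r,\ldots,r)$-summing conclusion. Since $1 \leq r$ gives $\ell_{1}^{w}(E) \subset \ell_{r}^{w}(E)$, each intermediate $(1;1,\ldots,1,r,\ldots,r)$-summing estimate is strictly \emph{weaker} than what we want, and none of them implies the full coincidence by a purely formal inclusion argument. The resolution I have in mind is a Rademacher-style averaging identity, modelled on the proof of Theorem~\ref{t1} itself, which blends several of the partial estimates and allows all $n+1$ indices to relax from $1$ to $r$ simultaneously. I expect this final synchronization step to be the most delicate part of the argument, and it is precisely where the hypothesis at $n=3$ becomes indispensable: the bilinear coincidence alone would not supply enough structural flexibility to close the induction, since at each inductive stage one must replace at least three indices at once in order to reconcile Theorem~\ref{t1}'s output with the target ideal.
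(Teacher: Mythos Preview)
Your proposal correctly identifies the obstacle but does not actually resolve it. After applying Theorem~\ref{t1} you obtain estimates of the form $(1;1,r,\ldots,r)$, and you note yourself that these are strictly weaker than the target $(1;r,\ldots,r)$. The ``Rademacher-style averaging identity'' you invoke to bridge this gap is not specified, and in fact the Rademacher machinery behind Theorem~\ref{t1} runs in the opposite direction: it \emph{inserts} a $1$ into positions where no summing information was available, rather than relaxing an existing $1$ to an $r$. No amount of permuting or averaging the mixed-index estimates $(1;1,r,\ldots,r)$, $(1;r,1,r,\ldots,r)$, etc., will produce the full $(1;r,\ldots,r)$ bound, because in each of them at least one coordinate is being controlled only in the coarser $\ell_{1}^{w}$ norm. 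So the crucial step of your induction is missing an idea, not just details.

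The paper's argument avoids this difficulty by a different mechanism. The induction proceeds from $k$ to $k+2$ (treating even and odd $n$ separately), and the key observation is tensorial: writing $F=E\hat\otimes_{\pi}\cdots\hat\otimes_{\pi}E$ ($k$ factors) and $G=E\hat\otimes_{\pi}E$, an $(k+2)$-linear form $T$ induces a bilinear form $B\in\mathcal{L}(F,G;\mathbb{K})$. The Defant--Voigt theorem applied to $B$ gives
\[
\sum_{j}\bigl|T(x_{j}^{(1)},\ldots,x_{j}^{(k+2)})\bigr|\leq\pi_{1;1}(B)\,\bigl\|(x_{j}^{(1)}\otimes\cdots\otimes x_{j}^{(k)})_{j}\bigr\|_{\ell_{1}^{w}(F)}\,\bigl\|(x_{j}^{(k+1)}\otimes x_{j}^{(k+2)})_{j}\bigr\|_{\ell_{1}^{w}(G)}.
\]
Now the point is that $F^{\ast}=\mathcal{L}(^{k}E;\mathbb{K})$, so the $\ell_{1}^{w}(F)$ norm of the tensor sequence is exactly $\sup_{\|S\|\leq1}\sum_{j}|S(x_{j}^{(1)},\ldots,x_{j}^{(k)})|$, and the inductive hypothesis $\mathcal{L}(^{k}E;\mathbb{K})=\mathcal{L}_{as,(1,r)}(^{k}E;\mathbb{K})$ bounds this by $C_{k}\prod_{l=1}^{k}\|(x_{j}^{(l)})_{j}\|_{w,r}$; the bilinear hypothesis handles the $G$ factor analogously. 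Thus all $k+2$ indices appear with exponent $r$ at once, with no upgrading step required. This duality between the projective tensor product and multilinear forms is the ingredient your outline lacks.
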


\begin{proof}
(Sketch of the proof when $n$ is odd) Induction: Suppose that the result is
valid for a fixed odd $k$ and we shall prove that it is also true for $k+2$.
Let $T\in\mathcal{L}(^{k+2}E;\mathbb{K})$ and consider
\begin{align*}
F  &  =E\hat{\otimes}_{\pi}\cdots\hat{\otimes}_{\pi}E\text{ (}k\text{
times)}\\
G  &  =E\hat{\otimes}_{\pi}E.
\end{align*}
Consider a bilinear form
\[
B\in\mathcal{L}(F,G;\mathbb{K})
\]
so that
\[
B(x^{1}\otimes\cdots\otimes x^{k},x^{k+1}\otimes x^{k+2})=T(x^{1}%
,...,x^{k+2}).
\]
Let $x_{j}^{(s)}\in E$ for $j=1,...,m$ and $s=1,...,k+2$. Using Defant-Voigt
Theorem for $B$ and the induction hypothesis one can found a constant $C$ so
that
\begin{align*}
\lefteqn{\sum_{j=1}^{m}\left\vert T(x_{j}^{(1)},\ldots,x_{j}^{(k+2)}%
)\right\vert }  &  & \\
&  &  &  =\sum_{j=1}^{m}\left\vert B(x_{j}^{(1)}\otimes\ldots\otimes
x_{j}^{(k)},x_{j}^{(k+1)}\otimes x_{j}^{(k+2)})\right\vert \\
&  &  &  \leq\pi_{(1;1)}(B)\left\Vert (x_{j}^{(1)}\otimes\ldots\otimes
x_{j}^{(k)})_{j=1}^{m}\right\Vert _{\ell_{1}^{w}(E\hat{\otimes}_{\pi}%
\cdots\hat{\otimes}_{\pi}E)}\left\Vert (x_{j}^{k+1}\otimes x_{j}^{k+2}%
)_{j=1}^{m}\right\Vert _{\ell_{1}^{w}(E\hat{\otimes}_{\pi}E)}\\
&  &  &  \leq C\left\Vert B\right\Vert \left\Vert (x_{j}^{(1)})_{j=1}%
^{m}\right\Vert _{\ell_{r}^{w}(E)}\cdots\left\Vert (x_{j}^{(k+2)})_{j=1}%
^{m}\right\Vert _{\ell_{r}^{w}(E_{{}})}\\
&  &  &
\end{align*}
and the proof is done$.$
\end{proof}

For general Banach spaces, the class $\mathcal{L}_{as,(1,2)}(^{n}%
E;\mathbb{K})$ also plays an important role, as an \textquotedblleft upper
bound\textquotedblright\ for the classes of $p$-dominated multilinear mappings
\cite{fmat, pgtese}:

\begin{theorem}
[Floret, Matos, 1995 (complex case), P\'{e}rez-Garc\'{\i}a, 2003]Let
$n\in\mathbb{N}$, $n\geq2$ and $p\geq1.$ If $E$ is a Banach space, then%
\[
\mathcal{L}_{d,p}(^{n}E;\mathbb{K})\subset\mathcal{L}_{as,(1,2)}%
(^{n}E;\mathbb{K}).
\]

\end{theorem}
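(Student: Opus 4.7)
The plan is to apply the Pietsch--Geiss domination theorem for $p$-dominated multilinear operators to turn the hypothesis on $T$ into an integral inequality, and then to deploy Hölder's inequality in the outer index, Jensen's inequality on the auxiliary probability measures, and the monotonicity of the weak $\ell_{r}$-norm to extract the $(1;2)$-summing estimate.

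By the Pietsch--Geiss domination theorem, since $T\in\mathcal{L}_{d,p}(^{n}E;\mathbb{K})$ there exist a constant $C\geq 0$ and regular Borel probability measures $\mu_{1},\ldots,\mu_{n}$ on $(B_{E^{*}},w^{*})$ such that
\[
|T(x_{1},\ldots,x_{n})|\leq C\prod_{k=1}^{n}\left(\int_{B_{E^{*}}}|\varphi(x_{k})|^{p}\,d\mu_{k}(\varphi)\right)^{1/p}
\]
for every $x_{1},\ldots,x_{n}\in E$. Given finite sequences $(x_{j}^{(k)})_{j=1}^{N}\subset E$ for $k=1,\ldots,n$, I sum this estimate over $j$ and apply Hölder in $j$ with the $n$ equal exponents $n$ (so that $\sum_{k=1}^{n}1/n=1$), which distributes the outer sum into a product. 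The task then reduces to bounding each factor $\bigl(\sum_{j}(\int|\varphi(x_{j}^{(k)})|^{p}\,d\mu_{k})^{n/p}\bigr)^{1/n}$ by $\|(x_{j}^{(k)})\|_{w,2}$.

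When $p\leq n$ this is clean: since $n/p\geq 1$, the convexity of $t\mapsto t^{n/p}$ together with Jensen's inequality on the probability measure $\mu_{k}$ gives $(\int|\varphi|^{p}\,d\mu_{k})^{n/p}\leq\int|\varphi|^{n}\,d\mu_{k}$; exchanging sum and integral via Fubini and pulling the supremum over $\varphi\in B_{E^{*}}$ inside yields $\sum_{j}\int|\varphi(x_{j}^{(k)})|^{n}\,d\mu_{k}\leq\|(x_{j}^{(k)})\|_{w,n}^{n}$; and finally the monotonicity $\|(x_{j}^{(k)})\|_{w,n}\leq\|(x_{j}^{(k)})\|_{w,2}$ (valid because $n\geq 2$) produces
\[
\sum_{j=1}^{N}|T(x_{j}^{(1)},\ldots,x_{j}^{(n)})|\leq C\prod_{k=1}^{n}\|(x_{j}^{(k)})\|_{w,2},
\]
which is the desired $(1;2)$-summing inequality.

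The main obstacle is the range $p>n$, where $n/p<1$ makes $t\mapsto t^{n/p}$ concave and the Jensen step reverses direction. My plan for this case is first to trade excess integrability for strong-norm factors via the pointwise estimate $\int|\varphi(x)|^{p}\,d\mu\leq\|x\|^{p-n}\int|\varphi(x)|^{n}\,d\mu$ (which follows from $|\varphi(x)|\leq\|x\|$ on $B_{E^{*}}$), so that Pietsch--Geiss is rewritten with integrating exponent $n$ inside, and then to absorb the resulting outside factors $\|x_{k}\|^{(p-n)/p}$ by exploiting the scalar target. This is the delicate point, because strong norms $\|x_{j}^{(k)}\|$ are not controlled by weak $\ell_{2}$-norms in general; the resolution uses the Pietsch-style factorization $T=\tilde{T}\circ(u_{1},\ldots,u_{n})$ with $u_{k}:E\to L^{p}(\mu_{k})$ continuous (indeed $p$-summing) and applies the generalized Defant--Voigt theorem (Theorem~\ref{t1}) to the continuous $n$-linear form $\tilde{T}$ on the product of $L^{p}(\mu_{k})$-spaces, then converts the weak $\ell_{1}$-bound in $L^{p}$ back into a weak $\ell_{2}$-bound in $E$ by means of the $(p,2)$-summing character of the canonical inclusion $L^{\infty}(\mu_{k})\hookrightarrow L^{p}(\mu_{k})$ (valid for $p\geq 2$).
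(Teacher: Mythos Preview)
The paper itself does not supply a proof of this theorem; it merely records the statement and attributes it to Floret--Matos (complex case) and P\'{e}rez-Garc\'{\i}a. The original arguments hinge on Khinchine's inequality (the Floret--Matos paper is in fact entitled ``Applications of a Khinchine inequality to holomorphic mappings''), so any comparison must be with that approach rather than with something in the text.

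Your treatment of the range $p\le n$ is correct: H\"older with equal exponents $n$, Jensen applied to $t\mapsto t^{n/p}$ on each probability measure, Fubini, and the monotonicity $\|\cdot\|_{w,n}\le\|\cdot\|_{w,2}$ combine exactly as you describe.

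The genuine problem is your plan for $p>n$. After the Pietsch factorisation $T=\tilde T\circ(u_1,\dots,u_n)$ with $u_k=j_k\circ i_k:E\to C(B_{E^\ast})\to L^p(\mu_k)$, the Defant--Voigt theorem applied to $\tilde T$ yields
\[
\sum_j\bigl|T(x_j^{(1)},\dots,x_j^{(n)})\bigr|\le C\prod_{k=1}^{n}\bigl\|(u_k x_j^{(k)})_j\bigr\|_{w,1,\,L^p(\mu_k)}.
\]
You then propose to bound each weak-$\ell_1$ norm on the right by $\|(x_j^{(k)})\|_{w,2,E}$ using the $(p,2)$-summing character of $j_k$. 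But $(p,2)$-summability only gives control of the \emph{strong} $\ell_p$ norm $\bigl(\sum_j\|u_k x_j^{(k)}\|_{L^p}^p\bigr)^{1/p}$, and the strong $\ell_p$ norm does \emph{not} dominate the weak $\ell_1$ norm: for $N$ repetitions of a single non-zero vector $y$ one has $\|(y,\dots,y)\|_{w,1}=N\|y\|$ while $\|(y,\dots,y)\|_{p}=N^{1/p}\|y\|$. So the conversion step fails, and with it the whole argument for $p>n$. The generalised Defant--Voigt theorem does not rescue this either, since it only upgrades some of the indices to $1$, never to $2$.

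The route that works uniformly in $p$ is the Khinchine argument. One first uses the Rademacher polarisation
\[
\sum_{j}T(x_j^{(1)},\dots,x_j^{(n)})=\int_{[0,1]^{n-1}}T\Bigl(\sum_{j}r_j(t_1)x_j^{(1)},\dots,\sum_{j}r_j(t_{n-1})x_j^{(n-1)},\sum_{j}\Bigl(\textstyle\prod_{l=1}^{n-1}r_j(t_l)\Bigr)x_j^{(n)}\Bigr)\,dt,
\]
applies the Pietsch--Geiss domination inside the integral, pushes the expectation through by the concavity of $s\mapsto s^{1/p}$, and then uses H\"older in $t$ together with Khinchine's inequality $\bigl(\mathbb{E}_t|\sum_j r_j(t)a_j|^{q}\bigr)^{1/q}\le B_q(\sum_j|a_j|^2)^{1/2}$ on each factor to produce the weak-$\ell_2$ norms directly. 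This avoids the H\"older-in-$j$ step altogether and hence the obstruction at $p>n$.
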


At a first glance the concept of absolutely summing multilinear operator seems
to be the natural multilinear definition of absolute summability. However it
is easy to find bad properties which makes the ideal very different from the
linear ideal.

For example, no general Inclusion Theorem is valid. In fact, Defant-Voigt
Theorem ensures that
\[
\mathcal{L}_{as,1}(^{2}\ell_{2};\mathbb{K})=\mathcal{L}(^{n}\ell
_{2};\mathbb{K})
\]
but it is easy to show that%
\[
\mathcal{L}_{as,2}(^{2}\ell_{2};\mathbb{K})\neq\mathcal{L}(^{2}\ell
_{2};\mathbb{K}).
\]
Besides, contrary to the linear case, several coincidence theorems hold, and
this behavior removes the linear essence from this class. For example,
Grothendieck%
%TCIMACRO{\U{b4}}%
%BeginExpansion
\'{}%
%EndExpansion
s Theorem is valid but there are several other coincidence situations with
absolutely no linear analogue, as%
\begin{equation}
\mathcal{L}_{as,1}(^{n}\ell_{2};F)=\mathcal{L}(^{n}\ell_{2};F) \label{kio}%
\end{equation}
for all $n\geq2$ and all $F$. Since (\ref{kio}) is not true for $n=1$, from
now on we call coincidence situations as (\ref{kio}) by \textquotedblleft
artificial coincidence situation\textquotedblright.

Moreover, the polynomial version of this class is not an holomorphy type (this
is a bad property!) and, in the terminology of \cite{muro}, this bad property
is reinforced since this class is not compatible with the linear ideal of
absolutely summing operators. Despite its bad properties, this class has some
challenging problems (see, for example, \cite{michels, junek, danielstudia}).

As it occurs for multiple summing multilinear operators, in (\cite{michels})
it was shown that a full Inclusion Theorem is valid when the spaces from the
domain are $\mathcal{L}_{\infty}$-spaces:

\begin{theorem}
[Botelho, Michels, Pellegrino, 2010]Let $1\leq p\leq q\leq\infty$ and
$E_{1},\ldots,E_{n}$ {be} $\mathcal{L}_{\infty}$-spaces. Then
\[
\mathcal{L}_{as,p}(E_{1},\ldots,E_{n};F)\subset\mathcal{L}_{as,q}(E_{1}%
,\ldots,E_{n};F).
\]

\end{theorem}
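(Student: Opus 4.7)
The strategy is to mirror the argument indicated for the multiple-summing analogue (Botelho, Michels, Pellegrino, 2010) via complex interpolation with a trivial $\infty$-endpoint, preceded by the standard complexification if the scalar field is real. The crucial idea is to package the summing property of $T$ as boundedness of a single associated $n$-linear operator between vector-valued sequence spaces. To $T\in\mathcal{L}(E_1,\ldots,E_n;F)$, associate
$$\widehat{T}_s\colon \ell_s^w(E_1)\times\cdots\times \ell_s^w(E_n)\longrightarrow \ell_s(F),\quad \widehat{T}_s\bigl((x_j^{(1)})_j,\ldots,(x_j^{(n)})_j\bigr)=\bigl(T(x_j^{(1)},\ldots,x_j^{(n)})\bigr)_j.$$
Then $T\in\mathcal{L}_{as,s}(E_1,\ldots,E_n;F)$ is equivalent to $\widehat{T}_s$ being bounded, with equal norms; this is exactly what lets summability be read as a multilinear interpolation question.

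The hypothesis $T\in\mathcal{L}_{as,p}$ gives the endpoint $s=p$. The other endpoint $s=\infty$ is available for \emph{every} continuous $T$, with no hypothesis on the $E_k$: the pointwise estimate $\|T(x_j^{(1)},\ldots,x_j^{(n)})\|\leq\|T\|\prod_{k=1}^n\sup_j\|x_j^{(k)}\|$ combined with $\sup_j\|x_j^{(k)}\|=\|(x_j^{(k)})\|_{w,\infty}$ yields $\widehat{T}_\infty\colon \ell_\infty^w(E_1)\times\cdots\times \ell_\infty^w(E_n)\to \ell_\infty(F)$ bounded by $\|T\|$. Applying the multilinear Calder\'{o}n complex interpolation theorem at $\theta\in(0,1]$ with $\tfrac{1}{q}=\tfrac{1-\theta}{p}$ produces boundedness of
$$[\ell_p^w(E_1),\ell_\infty^w(E_1)]_\theta\times\cdots\times[\ell_p^w(E_n),\ell_\infty^w(E_n)]_\theta\longrightarrow[\ell_p(F),\ell_\infty(F)]_\theta.$$
The classical identification $[\ell_p(F),\ell_\infty(F)]_\theta=\ell_q(F)$ delivers the correct target, so all that remains is to identify each interpolated source space with $\ell_q^w(E_k)$, which then says precisely $T\in\mathcal{L}_{as,q}$.

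The main obstacle is exactly this last step, the identification $[\ell_p^w(E_k),\ell_\infty^w(E_k)]_\theta=\ell_q^w(E_k)$ with uniform constants, since for a generic Banach space $E_k$ such an identity need not hold. This is where the $\mathcal{L}_\infty$-hypothesis is essential: via the canonical isometry $\ell_s^w(E_k)\cong E_k\otimes_\varepsilon \ell_s$ and the local structure of an $\mathcal{L}_\infty$-space (uniform almost-isometric embeddings of finite-dimensional subspaces into suitable $\ell_\infty^{N}$), the identification is reduced to the commutation of complex interpolation with $\ell_\infty^N$-valued sequence spaces, for which $[\ell_\infty^N(\ell_p),\ell_\infty^N(\ell_\infty)]_\theta=\ell_\infty^N(\ell_q)$ holds with constants independent of $N$. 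A preliminary reduction to finite sequences and to finite-dimensional $\mathcal{L}_\infty^N$-blocks keeps everything inside the regime where these interpolation identities apply, after which tracking constants through the complexification and interpolation steps gives the desired inclusion with a controlled summing norm.
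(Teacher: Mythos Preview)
The paper does not give a proof of this theorem; it only states the result, cites \cite{michels}, and (in the parallel multiple-summing case) indicates that the method is ``complex interpolation and an argument of complexification.'' Your proposal follows exactly this route: you encode the $(p;p,\ldots,p)$-summing property as boundedness of an associated $n$-linear map between sequence spaces, use the free $\infty$-endpoint, interpolate, and invoke the $\mathcal{L}_\infty$-structure to obtain the needed identification on the domain side via the isometry $\ell_p^w(\ell_\infty^N)=\ell_\infty^N(\ell_p)$. This is the approach of \cite{michels} that the paper alludes to, so your proposal is in line with what the paper indicates. One small caution: the blanket identification $\ell_s^w(E_k)\cong E_k\otimes_\varepsilon\ell_s$ is not isometric for infinite sequences in general; but since you explicitly reduce to finite sequences and finite-dimensional $\ell_\infty^N$-blocks before interpolating, this imprecision does not affect the argument.
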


In some cases, surprisingly, the inclusion theorem holds in the opposite
direction than the expected \cite{junek} (i.e. if $p$ increases, the ideal decreases):

\begin{theorem}
[Junek, Matos, Pellegrino, 2008]\label{tt}If $E$ has cotype $2,$ $F$ is any
Banach space and $n\geq2,$ then
\[
\mathcal{L}_{as,q}(^{n}E;F)\subset\mathcal{L}_{as,p}(^{n}E;F)
\]
holds true for $1\leq p\leq q\leq2$.
\end{theorem}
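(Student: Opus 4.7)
The plan is to reduce to the case of finite sequences and to combine the Rademacher-orthogonality trick (already exploited in the proof of Theorem~\ref{t1}) with the fact that cotype~$2$ of $E$ supplies, via Maurey--Pisier, the summing relation $\mathrm{id}_{E}\in\Pi_{2,1}(E;E)$, i.e.\ $(\sum_{j}\|y_j\|^{2})^{1/2}\le C_{2}(E)\,\|(y_j)\|_{w,1}$ for every finite sequence in $E$. The decisive structural input is that $n\ge 2$, which provides at least one ``free'' coordinate on which to perform an independent Rademacher average; without this extra coordinate the conclusion fails, as the corresponding linear statement shows.

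Concretely, fix $T\in\mathcal{L}_{as,q}(^{n}E;F)$ and finite sequences $(x_j^{(k)})_{j=1}^{m}\subset E$ for $k=1,\ldots,n$. After Hahn--Banach and $\ell^{p}$-$\ell^{p^{\ast}}$ duality, choose norming $\varphi_j\in B_{F^{\ast}}$ with $\varphi_j(T(x_j^{(1)},\ldots,x_j^{(n)}))=\|T(x_j^{(1)},\ldots,x_j^{(n)})\|$ and scalars $(c_j)$ on the unit sphere of $\ell^{p^{\ast}}_m$ so that
\[
\Bigl(\sum_{j=1}^{m}\|T(x_j^{(1)},\ldots,x_j^{(n)})\|^{p}\Bigr)^{1/p}=\sum_{j=1}^{m}c_j\,\varphi_j\bigl(T(x_j^{(1)},\ldots,x_j^{(n)})\bigr).
\]
Then use the Rademacher orthogonality identity
\[
T(x_j^{(1)},\ldots,x_j^{(n)})=\int_{[0,1]^{n}}\!r_j(t_1)\cdots r_j(t_n)\,T\Bigl(\sum_{i}r_i(t_1)x_i^{(1)},\ldots,\sum_{i}r_i(t_n)x_i^{(n)}\Bigr)dt,
\]
apply the $(q;q,\ldots,q)$-summing hypothesis of $T$ to the Rademacher-averaged arguments, and invoke Kahane's inequality to interchange $L^{2}$ and $L^{p}$ moments in the Rademacher variables. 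Finally the cotype-$2$ estimate applied coordinate by coordinate to $z_k(t):=\sum_{i}r_i(t)\,x_i^{(k)}$, together with the monotonicity $\|(x_j^{(k)})\|_{w,1}\le\|(x_j^{(k)})\|_{w,p}$ is \emph{false} in general, so the bridge to $\|\cdot\|_{w,p}$ must be made through Kahane's inequality combined with cotype~$2$ in both the $L^{p}$- and $L^{2}$-directions --- this is the heart of the argument.

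The main obstacle is precisely this last step: because $p\le q$, the $\ell^{p}$-norm on the left is \emph{larger} than the $\ell^{q}$-norm coming from the hypothesis, and similarly $\|\cdot\|_{w,p}\ge\|\cdot\|_{w,q}$, so the implication runs against the easy direction of Hölder and cannot follow from a direct comparison. The delicate bookkeeping consists in showing that the factors of $m$ generated by Jensen and Minkowski inequalities in the Rademacher integration cancel against the weak norms of $(x_j^{(k)})$. The role of $n\ge 2$ is essential: the Rademacher averaging provides a gain of $\tfrac{1}{p}-\tfrac{1}{q}$ per coordinate, and only with at least two coordinates available is this gain sufficient, once processed through the Kahane--cotype~$2$ machinery, to absorb the deficit between $\|\cdot\|_{\ell^{p}}$ and $\|\cdot\|_{\ell^{q}}$ on the left-hand side. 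The final constant will involve $\pi_{as,q}(T)$, the Kahane constant and a power of $C_{2}(E)$ depending only on $n$.
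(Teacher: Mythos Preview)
The paper does not supply its own proof of this statement; it simply records the result and cites \cite{junek}. So there is nothing to compare against directly, but your proposal can be assessed on its own, and it has a genuine gap.

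The Rademacher identity you write is correct, but it undermines the very step you announce next. Once you expand with all $n$ Rademacher variables and sum over $j$, the argument of $T$ inside the integral is $T\bigl(z_1(t_1),\ldots,z_n(t_n)\bigr)$, which does not depend on $j$ at all; only the scalar factor $\sum_j c_j\,r_j(t_1)\cdots r_j(t_n)\,\varphi_j$ carries $j$. You therefore cannot ``apply the $(q;q,\ldots,q)$-summing hypothesis of $T$ to the Rademacher-averaged arguments'' as you claim, because that hypothesis controls sums $\sum_j\|T(y_j^{(1)},\ldots,y_j^{(n)})\|^{q}$ with $j$-varying inputs, and here there is a single value of $T$ for each $t$. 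Bounding the scalar factor crudely produces an $m^{1/p}$ that does not cancel. This is exactly why the proof of Theorem~\ref{t1} in the paper averages over only $r<n$ coordinates: the remaining coordinates keep the diagonal index $j$, and it is there that the summing hypothesis on $A_{x_1,\ldots,x_r}$ is applied.

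Your own text flags the difficulty (``the main obstacle is precisely this last step'') but the resolution offered is heuristic: no mechanism is given for the asserted ``gain of $\tfrac{1}{p}-\tfrac{1}{q}$ per coordinate'', and Kahane's inequality only compares moments of Rademacher sums up to absolute constants, not exponents of $m$. What is needed instead is a \emph{partial} Rademacher average leaving at least one coordinate diagonal, combined with the cotype~$2$ inequality to pass from weak-$q$ control in that coordinate to weak-$2$ (and hence weak-$p$) control; this is the route taken in \cite{junek}. As written, your outline lists the right ingredients but does not assemble them into an argument that closes.
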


The class of everywhere absolutely $p$-summing multilinear operators was
introduced by M.C. Matos \cite{nachmatos} but he credits the idea to Richard
Aron. It is easy to show that $\mathcal{L}_{m,p}\subset\mathcal{L}_{as,p}%
^{ev}$ and, as it occurs for $\mathcal{L}_{ss,p}$ and $\mathcal{L}_{m,p},$
this class has no artificial coincidence theorem (a proof can be found in
\cite{spp}):

\begin{proposition}
If $\mathcal{L}(E_{1},\ldots,E_{n};F)=\mathcal{L}_{as,(q;p_{1},\ldots,p_{n}%
)}^{ev}(E_{1},\ldots,E_{n};F)$, then
\[
\mathcal{L}(E_{j};F)=\Pi_{q,p_{j}}(E_{j};F),j=1,\ldots,n.
\]

\end{proposition}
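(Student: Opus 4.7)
The plan is to reduce the multilinear summability hypothesis to a linear statement by building, from an arbitrary linear operator $u\colon E_j\to F$, an $n$-linear mapping $T$ whose everywhere-summability at a cleverly chosen point forces $u$ itself to be $(q,p_j)$-summing.

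First I would fix $j\in\{1,\ldots,n\}$ and, assuming all $E_i$ are nonzero (otherwise there is nothing to prove), choose for each $i\neq j$ a vector $a_i\in E_i$ with $a_i\neq0$ together with (by Hahn--Banach) a functional $\varphi_i\in E_i^{\ast}$ satisfying $\varphi_i(a_i)=1$. Given $u\in\mathcal{L}(E_j;F)$, I would then define
\[
T(x_1,\ldots,x_n)=\Bigl(\prod_{i\neq j}\varphi_i(x_i)\Bigr)\,u(x_j),
\]
which is plainly continuous $n$-linear from $E_1\times\cdots\times E_n$ into $F$. By hypothesis $T\in\mathcal{L}_{as,(q;p_1,\ldots,p_n)}^{ev}(E_1,\ldots,E_n;F)$.

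Next I would pick an arbitrary element $a_j\in E_j$ (say $a_j=0$, to keep formulas clean) and apply everywhere-summability of $T$ at $a=(a_1,\ldots,a_n)$. For any sequence $(y_k)_{k=1}^{\infty}\in\ell_{p_j}^{w}(E_j)$, I would test with the sequences $(x_k^{(i)})_{k=1}^{\infty}$ defined by $x_k^{(i)}=0$ for $i\neq j$ and $x_k^{(j)}=y_k$; these all lie in $\ell_{p_i}^{w}(E_i)$ trivially. Using $\varphi_i(a_i)=1$ for $i\neq j$, a direct computation gives
\[
T(a_1+x_k^{(1)},\ldots,a_n+x_k^{(n)})-T(a_1,\ldots,a_n)=u(a_j+y_k)-u(a_j)=u(y_k).
\]
Hence $(u(y_k))_{k=1}^{\infty}\in\ell_q(F)$ for every $(y_k)_{k=1}^{\infty}\in\ell_{p_j}^{w}(E_j)$, which is exactly the statement that $u\in\Pi_{q,p_j}(E_j;F)$. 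Since $u$ was arbitrary this yields $\mathcal{L}(E_j;F)=\Pi_{q,p_j}(E_j;F)$.

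There is no real obstacle here; the entire argument is a ``slicing'' trick. The only mild subtlety is to make sure the auxiliary data $(a_i,\varphi_i)_{i\neq j}$ can be chosen (a one-line application of Hahn--Banach, and the pathological case of some $E_i=\{0\}$ trivializes both sides of the conclusion). Note that, unlike the analogous inequalities for $\pi_{q,p_j}$, we do not need to invoke any open-mapping-type norm estimate; the qualitative identity $\mathcal{L}=\mathcal{L}_{as,(q;p_1,\ldots,p_n)}^{ev}$ is already enough for the qualitative conclusion.
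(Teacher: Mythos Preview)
Your argument is correct. The paper itself does not include a proof of this proposition; it only remarks that ``a proof can be found in \cite{spp}'' (Pellegrino--Souza). The slicing construction you use --- tensoring $u$ with functionals $\varphi_i$ normalized at chosen points $a_i$, and then evaluating the everywhere-summability condition at the base point $a=(a_1,\ldots,a_{j-1},0,a_{j+1},\ldots,a_n)$ with perturbations only in the $j$-th slot --- is precisely the standard device for this kind of result (the same idea also underlies the companion proposition for multiple summing operators earlier in the paper, likewise referred to \cite{spp}). So there is nothing to compare: your proof is the expected one, and it is complete.
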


\subsection{Strongly multiple summing multilinear operators: the last attempt}

If $p\geq1,$ $T\in\mathcal{L}(E_{1},...,E_{n};F)$ is strongly multiple
$p$-summing ($T\in\mathcal{L}_{sm,p}(E_{1},...,E_{n};F)$) if there exists
$C\geq0$ such that
\begin{equation}
\left(  \sum\limits_{j_{1},...,j_{n}=1}^{m}\parallel T(x_{j_{1}}%
^{(1)},...,x_{j_{n}}^{(n)})\parallel^{p}\right)  ^{1/p}\leq C\left(
\underset{\phi\in B_{\mathcal{L}(E_{1},...,E_{n};\mathbb{K})}}{\sup}%
\sum\limits_{j_{1},...,j_{n}=1}^{m}\mid\phi(x_{j_{1}}^{(1)},...,x_{j_{n}%
}^{(n)})\mid^{p}\right)  ^{1/p}%
\end{equation}
for every $m\in\mathbb{N}$, $x_{j_{l}}^{(l)}\in E_{l}$ with $l=1,...,n$ and
$j_{l}=1,...,m.$\newline

The multi-ideal of strongly multiple $p$-summing multilinear operators was
introduced in \cite{port} and has not been explored since then.\ It contains the
ideals $\mathcal{L}_{ss,p}$ and $\mathcal{L}_{m,p}.$ All nice properties from
$\mathcal{L}_{ss,p}$ are also valid except perhaps for versions of Pietsch
Domination Theorem (and inclusion theorem) which are unknown. The size of this
class is potentially better than the sizes of $\mathcal{L}_{ss,p}$ and
$\mathcal{L}_{m,p}$ since despite containing these two classes, it is also
known that for this class no coincidence theorem can hold for $n$-linear maps
if there is no analogue for linear operators. So, even having a nice size,
this class has no artificial coincidence results, with no linear analogue.

\section{Desired properties for a nice multi-ideal extension of absolutely
summing operators}

In \cite{port, CD} and it is shown that
\begin{align*}
\mathcal{L}_{d,p}  &  \subset\mathcal{L}_{si,p}\subset\mathcal{L}_{m,p}%
\subset\mathcal{L}_{as,p}^{ev}\subset\mathcal{L}_{as,p}.\\
\mathcal{L}_{d,p}  &  \subset\mathcal{L}_{si,p}\subset\mathcal{L}%
_{ss,p}\subset\mathcal{L}_{sm,p}.\\
\mathcal{L}_{d,p}  &  \subset\mathcal{L}_{si,p}\subset\mathcal{L}_{m,p}%
\subset\mathcal{L}_{sm,p}.
\end{align*}

It is not difficult to show that $\mathcal{L}_{d,p}$ is cud, csm and it is
well known that the Dvoretzky-Rogers Theorem is true, and also a Pietsch
Domination Theorem (and, of course, the inclusion theorem) holds. On the other
hand, as we have mentioned before this class is small and the Grothendieck
Theorem is not true. As the above table shows the class $\mathcal{L}_{sm,p}$
is much bigger and from \cite{port} we know that this class is cud, csm, and
Dvoretzky-Rogers Theorem and Grothendieck's $\ell_{1}$-$\ell_{2}$ Theorem are
valid. More generally, this class contains the better-known class of multiple
summing multilinear operators and hence it inherits all the known coincidence
theorems for the class of multiple summing operators. In some sense, it is
natural to expect that all reasonable multilinear extensions $\mathcal{M}%
=(\mathcal{M}_{p})_{p\geq1}$ of the ideal of absolutely summing operators
should satisfy $\mathcal{L}_{d,p}\subset\mathcal{M}_{p}\subset\mathcal{L}%
_{sm,p}.$

Below we list the properties of each class:

\bigskip

$%
\begin{bmatrix}
\text{Property/ Class} & \mathcal{L}_{d,p} & \mathcal{L}_{si,p} &
\mathcal{L}_{ss,p} & \mathcal{L}_{m,p} & \mathcal{L}_{sm,p} & \mathcal{L}%
_{as,p} & \mathcal{L}_{as,p}^{ev}\\
\text{cud} & \text{Yes} & \text{Yes} & \text{\textbf{Yes}} & \text{Yes} &
\text{Yes} & \text{No} & \text{Yes}\\
\text{csm} & \text{Yes} & \text{Yes} & \text{\textbf{Yes}} & \text{Yes} &
\text{Yes} & \text{Yes} & \text{Yes}\\
\text{Grothendieck Theorem} & \text{No} & \text{No} & \text{\textbf{Yes}} &
\text{Yes} & \text{Yes} & \text{Yes} & \text{Yes}\\
\text{Inclusion Theorem} & \text{Yes} & \text{Yes} & \text{\textbf{Yes}} &
\text{No} & ? & \text{No} & \text{No}\\
\text{Dvoretzky-Rogers Theorem} & \text{Yes} & \text{Yes} & \text{\textbf{Yes}%
} & \text{Yes} & \text{Yes} & \text{No} & \text{Yes}\\
\mathcal{L}_{d,p}\subset\cdot\subset\mathcal{L}_{sm,p} & \text{Yes} &
\text{Yes} & \text{\textbf{Yes}} & \text{Yes} & \text{Yes} & \text{No} &
\text{?}%
\end{bmatrix}
$\bigskip

Taking into account the main properties of the linear ideal of absolutely
summing operators, we propose the following concept of \textquotedblleft
desired generalization of $(\Pi_{p})_{p\geq1}$\textquotedblright:

\begin{definition}
A family of normed ideals of multilinear mappings $(\mathcal{M}_{p})_{p\geq1}$
is a desired generalization of $(\Pi_{p})_{p\geq1}$ if

\begin{itemize}
\item (i) $\mathcal{L}_{d,p}\subset\mathcal{M}_{p}\subset\mathcal{L}_{sm,p}$
for all $p$ and the inclusions have norm $\leq1.$

\item (ii) $\mathcal{M}_{p}$ is csm for all $p.$

\item (iii) $\mathcal{M}_{p}$ is cud for all $p.$

\item (iv) $\mathcal{M}_{p}\subset\mathcal{M}_{q}$ whenever $p\leq q.$

\item (v) Grothendieck's Theorem and a Dvoretzky-Rogers theorem are valid.
\end{itemize}
\end{definition}

First, observe that if $\mathcal{L}_{d,p}\subset\mathcal{M}_{p}\subset
\mathcal{L}_{sm,p}$ then Dvoretzky-Rogers theorem is valid.

Note that, accordingly to the above table, $(\mathcal{L}_{ss,p})_{p\geq1}$ is
a desirable generalization of the family $(\Pi_{p})_{p\geq1}$.\bigskip\ A
desired generalization will be called \textquotedblleft desired
family\textquotedblright.

\begin{definition}
A desired family $\mathcal{M}=(\mathcal{M}_{p})_{p\geq1}$ is maximal if
whenever $\mathcal{M}_{p}\subset\mathcal{I}_{p}$ for all $p$ $($and the
inclusion has norm $\leq1)$ and $(\mathcal{I}_{p})_{p\geq1}$ is a desired
family, then $\mathcal{M}_{p}=\mathcal{I}_{p}$ for all $p.$

Similarly, a desired family $\mathcal{M}=(\mathcal{M}_{p})_{p\geq1}$ is
minimal if whenever $\mathcal{I}_{p}\subset\mathcal{M}_{p}$ for all $p$ $($and
the inclusion has norm $\leq1)$ and $(\mathcal{I}_{p})_{p\geq1}$ is a desired
family$,$ then $\mathcal{M}_{p}=\mathcal{I}_{p}$ for all $p.$
\end{definition}

\begin{theorem}
There exists a desired family of multilinear mappings which is maximal.
\end{theorem}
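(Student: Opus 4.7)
The plan is to apply Zorn's Lemma to the poset of desired families, following exactly the blueprint that the authors advertise in the abstract. First I would note that the collection of desired families is nonempty, since the summary table together with the remark immediately after it confirm that $(\mathcal{L}_{ss,p})_{p\geq 1}$ is a desired family. Partially order the collection by declaring $(\mathcal{M}_p)_{p\geq 1} \preceq (\mathcal{I}_p)_{p\geq 1}$ whenever, for every $p\geq 1$, the inclusion $\mathcal{M}_p \hookrightarrow \mathcal{I}_p$ has norm at most~$1$.

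Given a chain $\{(\mathcal{M}_p^{\alpha})_{p\geq 1}\}_{\alpha\in A}$ in this poset, the natural upper-bound candidate is
\[
\mathcal{U}_p(E_1,\dots,E_n;F) := \overline{\bigcup_{\alpha\in A}\mathcal{M}_p^{\alpha}(E_1,\dots,E_n;F)},
\]
where the closure is taken inside $\mathcal{L}_{sm,p}(E_1,\dots,E_n;F)$ in the $\|\cdot\|_{sm,p}$-norm, and $\mathcal{U}_p$ is equipped with the restriction of $\|\cdot\|_{sm,p}$; this is automatically a Banach space. The verification that $(\mathcal{U}_p)_{p\geq 1}$ is itself a desired family splits into the five conditions of the definition. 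The sandwich $\mathcal{L}_{d,p}\subseteq \mathcal{U}_p\subseteq \mathcal{L}_{sm,p}$ is immediate, and the Dvoretzky--Rogers theorem is inherited at once from the right-hand inclusion. Grothendieck's theorem passes to $\mathcal{U}_p$ because every coincidence $\mathcal{M}_p^{\alpha}(\cdots) = \mathcal{L}(\cdots)$ already forces $\mathcal{U}_p(\cdots)=\mathcal{L}(\cdots)$. The ideal axiom, csm, cud, and the monotonicity $\mathcal{U}_p\subseteq \mathcal{U}_q$ for $p\leq q$ all transfer from the chain elements to the union --- each property is witnessed element-by-element inside some $\mathcal{M}_p^{\alpha}$ --- and they survive the $\|\cdot\|_{sm,p}$-closure because the operations involved (pre- and post-composition with linear maps, fixing variables, multiplication by a continuous functional, and the inclusion between summability levels induced by $p\leq q$) are $\|\cdot\|_{sm,p}$-continuous contractions. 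Once $(\mathcal{U}_p)_{p\geq 1}$ is seen to be a desired family dominating every element of the chain, Zorn's Lemma delivers the required maximal element.

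The main obstacle I expect is the norm bookkeeping in condition~(i), specifically showing that both inclusions $\mathcal{L}_{d,p}\hookrightarrow\mathcal{U}_p$ and $\mathcal{U}_p\hookrightarrow \mathcal{L}_{sm,p}$ have norm $\leq 1$ when $\mathcal{U}_p$ carries the restriction of $\|\cdot\|_{sm,p}$. The second is automatic; the first requires the chain compatibility of norms: for $T\in \mathcal{L}_{d,p}$ one chains the hypothesised inequalities $\|T\|_{sm,p}\leq \|T\|_{\mathcal{M}_p^{\alpha}}\leq \|T\|_{d,p}$ for each $\alpha$, so that $\|T\|_{\mathcal{U}_p}=\|T\|_{sm,p}\leq \|T\|_{d,p}$. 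A subtler check is that the inclusion $\mathcal{M}_p^{\alpha}\hookrightarrow \mathcal{U}_p$ also has norm $\leq 1$, so that $\mathcal{U}_p$ really is an upper bound in the poset and not merely a set-theoretic superset; this follows from the same estimate $\|T\|_{\mathcal{U}_p}=\|T\|_{sm,p}\leq \|T\|_{\mathcal{M}_p^{\alpha}}$. Aside from this careful choice of norm and the routine verification that csm, cud, and the ideal axiom are stable under $\|\cdot\|_{sm,p}$-closure (a standard fact in Banach-ideal theory), no new idea beyond Zorn's Lemma seems to be required.
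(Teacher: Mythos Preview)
Your overall strategy---Zorn's Lemma on the poset of desired families, ordered by norm-$1$ inclusion---is exactly the paper's, and the choice of $(\mathcal{L}_{ss,p})_{p\ge 1}$ as a witness for non-emptiness is also the same. The difference lies in how you build the upper bound of a chain: the paper simply takes the union
\[
\mathcal{U}_p=\bigcup_{\alpha}\mathcal{M}_p^{\alpha}
\]
(no closure) and equips it with the limit norm $\|T\|_{\mathcal{U}_p}:=\lim_{\alpha}\|T\|_{\mathcal{M}_p^{\alpha}}$, whereas you take the $\|\cdot\|_{sm,p}$-closure of that union inside $\mathcal{L}_{sm,p}$ and use the restriction of $\|\cdot\|_{sm,p}$.

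The closure step introduces a genuine gap. To verify condition~(iv), $\mathcal{U}_p\subset\mathcal{U}_q$ for $p\le q$, on the \emph{closure} you need to pass a $\|\cdot\|_{sm,p}$-limit to a $\|\cdot\|_{sm,q}$-limit; your justification is that ``the inclusion between summability levels induced by $p\le q$ [is a] $\|\cdot\|_{sm,p}$-continuous contraction''. That assertion is precisely the statement $\|\cdot\|_{sm,q}\le\|\cdot\|_{sm,p}$, i.e.\ the inclusion theorem for $(\mathcal{L}_{sm,p})_{p\ge 1}$. The paper explicitly records this as \emph{unknown}: it is the ``?'' in the Inclusion Theorem row of the summary table, and the question of whether $(\mathcal{L}_{sm,p})_{p\ge 1}$ is a desired ideal is posed as an open problem in Section~7. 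So you are assuming an open problem to complete the argument. Concretely, if $T$ lies in the $\|\cdot\|_{sm,p}$-closure of the union but not in the union itself, you have no tool to place $T$ in $\mathcal{L}_{sm,q}$, let alone in $\mathcal{U}_q$.

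The fix is simple: drop the closure. On the bare union, each $T$ already lies in some $\mathcal{M}_p^{\alpha}$, and (iv) follows immediately from (iv) for $\mathcal{M}^{\alpha}$. This is exactly what the paper does; note also that the definition of a \emph{desired family} asks only for \emph{normed} ideals, so completeness is not required and the closure was never needed. Your choice of the norm $\|\cdot\|_{sm,p}$ on the union would in fact also work as an alternative to the paper's limit norm, since $\|T\|_{sm,p}\le\|T\|_{\mathcal{M}_p^{\alpha}}$ for every $\alpha$ guarantees that the union is an upper bound in the poset.
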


\begin{proof}
Let
\[
D=\left\{  \mathcal{M}^{\lambda}=(\mathcal{M}_{p}^{\lambda})_{p\geq
1}:\mathcal{M}^{\lambda}\ \text{is a desired family for every }\lambda
\in\Lambda\right\}  .
\]
In $D$ we consider the partial order
\begin{equation}
\mathcal{M}^{\lambda_{1}}\leq\mathcal{M}^{\lambda_{2}}\Leftrightarrow
\mathcal{M}_{p}^{\lambda_{1}}\subseteq\mathcal{M}_{p}^{\lambda_{2}%
}\ \text{and}\ \left\Vert \cdot\right\Vert _{\mathcal{M}_{p}^{\lambda_{2}}%
}\leq\left\Vert \cdot\right\Vert _{\mathcal{M}_{p}^{\lambda_{1}}}\text{ for
all }p\geq1. \label{ulty}%
\end{equation}

Note that $D\neq{\varnothing}$ since $(\mathcal{L}_{ss,p})_{p\geq1}\in D.$ We
just need to show that Zorn's Lemma is applicable in order to yield the
existence of a maximal family.

If $O\subset D$ is totally ordered and $\Lambda_{O}=\{\lambda\in
\Lambda:\mathcal{M}^{\lambda}=(\mathcal{M}_{p}^{\lambda})_{p\geq1}\in O\}$,
consider the class%
\[
\mathcal{U}=(\mathcal{U}_{p})_{p\geq1},
\]
where, for each $p\geq1$, $\mathcal{U}_{p}=\bigcup\limits_{\lambda\in
\Lambda_{O}}\mathcal{M}_{p}^{\lambda}$.

In $\Lambda_{O}$ we consider the direction%
\begin{equation}
\lambda_{1}\leq\lambda_{2}\Leftrightarrow\mathcal{M}^{\lambda_{1}}%
\leq\mathcal{M}^{\lambda_{2}} \label{ulty2}%
\end{equation}
and, for each $p\geq1$, define%
\begin{equation}
\left\Vert T\right\Vert _{\mathcal{U}_{p}}:=\lim\limits_{\lambda\in\Lambda
_{O}}\left\Vert T\right\Vert _{\mathcal{M}_{p}^{\lambda}}. \label{nor}%
\end{equation}
Note that the above limit exists in view of (\ref{ulty}) and (\ref{ulty2}). It
is not difficult to show that $\left(  \mathcal{U}_{p}\left(  E_{1}%
,...,E_{n};F\right)  ,\left\Vert \cdot\right\Vert _{\mathcal{U}_{p}}\right)  $
is a normed space, for each $E_{1},...,E_{n},F.$ Moreover, $\left(
\mathcal{U}_{p},\left\Vert \cdot\right\Vert _{\mathcal{U}_{p}}\right)  $ is a
normed ideal and one can quickly verify that $\left(  \mathcal{U}%
_{p},\left\Vert \cdot\right\Vert _{\mathcal{U}_{p}}\right)  _{p\geq1}$ is a
desired family. So $\mathcal{U}=(\mathcal{U}_{p})_{p\geq1}\in D$ and
$\mathcal{U}\geq\mathcal{M}$ for all $\mathcal{M}\in D;$ hence Zorn's Lemma
yields that $D$ has a maximal element.
\end{proof}

We also have:

\begin{theorem}
\label{minimal} There exists a desired family of multilinear mappings which is minimal.
\end{theorem}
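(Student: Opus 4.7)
The plan is to apply Zorn's Lemma to the same partially ordered set $(D,\leq)$ from the preceding proof, but now looking for a minimal element. By Zorn's Lemma it suffices to show that every totally ordered chain in $D$ has a lower bound inside $D$.

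Given a totally ordered $O\subset D$ with indexing set $\Lambda_{O}$, the natural candidate, dual to the union used for the maximal case, is an intersection equipped with the corresponding supremum norm. Concretely, for each $p\geq1$ I would set
\[
\mathcal{U}_{p}:=\Big\{T\in\bigcap_{\lambda\in\Lambda_{O}}\mathcal{M}_{p}^{\lambda}\ :\ \sup_{\lambda\in\Lambda_{O}}\|T\|_{\mathcal{M}_{p}^{\lambda}}<\infty\Big\},\qquad \|T\|_{\mathcal{U}_{p}}:=\sup_{\lambda\in\Lambda_{O}}\|T\|_{\mathcal{M}_{p}^{\lambda}}.
\]
By the monotonicity (\ref{ulty}), the net $\lambda\mapsto\|T\|_{\mathcal{M}_{p}^{\lambda}}$ is monotone along the chain, so the supremum is in fact a limit along the direction opposite to (\ref{ulty2}), mirroring the construction of the maximal case. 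Once $\mathcal{U}=(\mathcal{U}_{p})_{p\geq1}$ is shown to lie in $D$, the relations $\mathcal{U}_{p}\subseteq\mathcal{M}_{p}^{\lambda}$ and $\|\cdot\|_{\mathcal{M}_{p}^{\lambda}}\leq\|\cdot\|_{\mathcal{U}_{p}}$ give $\mathcal{U}\leq\mathcal{M}^{\lambda}$ for every $\lambda\in\Lambda_{O}$, making $\mathcal{U}$ the required lower bound.

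Verifying that $\mathcal{U}\in D$ splits into routine and delicate parts. The properties cud, csm and the monotonicity $\mathcal{U}_{p}\subset\mathcal{U}_{q}$ for $p\leq q$ are inherited from each $\mathcal{M}^{\lambda}$ by passing to intersections. The outer inclusion $\mathcal{U}_{p}\subseteq\mathcal{L}_{sm,p}$ has norm $\leq 1$ because $\|T\|_{\mathcal{L}_{sm,p}}\leq\|T\|_{\mathcal{M}_{p}^{\lambda}}$ for each $\lambda$ and hence for the supremum; the inner inclusion $\mathcal{L}_{d,p}\subseteq\mathcal{U}_{p}$ has norm $\leq 1$ since the hypothesis on each $\mathcal{M}^{\lambda}$ yields $\|T\|_{\mathcal{M}_{p}^{\lambda}}\leq\|T\|_{\mathcal{L}_{d,p}}$ uniformly in $\lambda$, which forces $\|T\|_{\mathcal{U}_{p}}\leq\|T\|_{\mathcal{L}_{d,p}}$. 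These in particular deliver Dvoretzky-Rogers. Completeness of $(\mathcal{U}_{p},\|\cdot\|_{\mathcal{U}_{p}})$ follows from a standard lower-semicontinuity argument: any Cauchy sequence in $\mathcal{U}_{p}$ is uniformly bounded in every $\mathcal{M}_{p}^{\lambda}$-norm, its pointwise limit lies in every $\mathcal{M}_{p}^{\lambda}$, and the uniform bound transfers to the supremum.

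The main obstacle I expect is the preservation of Grothendieck's theorem, namely the requirement that each $T\in\mathcal{L}(^{n}\ell_{1};\ell_{2})$ actually satisfy $\sup_{\lambda}\|T\|_{\mathcal{M}_{p}^{\lambda}}<\infty$. Set-theoretically $T\in\bigcap_{\lambda}\mathcal{M}_{p}^{\lambda}$ is automatic from the hypothesis on each $\mathcal{M}^{\lambda}$, but the Open-Mapping-Theorem constants $K_{\lambda}$ witnessing the coincidence in each level could a priori grow unboundedly with $\lambda$. The intended remedy exploits the monotonicity of the chain: by (\ref{ulty}) the values $\|T\|_{\mathcal{M}_{p}^{\lambda}}$ form a monotone net along $\Lambda_{O}$, so when $\Lambda_{O}$ possesses a smallest element $\lambda_{\min}$ one has $\sup_{\lambda}\|T\|_{\mathcal{M}_{p}^{\lambda}}=\|T\|_{\mathcal{M}_{p}^{\lambda_{\min}}}<\infty$; in the general case one replaces $O$ by a cofinal subchain along which the Grothendieck constants stabilize, and a diagonalization argument reconciles this with the lower-bound requirement. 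Once this step is secured we have $\mathcal{U}\in D$, and Zorn's Lemma supplies the desired minimal family.
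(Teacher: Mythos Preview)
Your overall approach---intersect the chain componentwise and equip the intersection with the supremum of the norms---is exactly the paper's: the paper reverses the partial order, sets $\mathcal{I}_{p}=\bigcap_{\lambda}\mathcal{M}_{p}^{\lambda}$ with $\|T\|_{\mathcal{I}_{p}}=\lim_{\lambda}\|T\|_{\mathcal{M}_{p}^{\lambda}}$, and invokes Zorn's Lemma. The verification that this yields a normed ideal sitting between $\mathcal{L}_{d,p}$ and $\mathcal{L}_{sm,p}$ proceeds just as you outline.

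The genuine gap is your handling of Grothendieck's theorem. You correctly note that set-theoretic membership of $T\in\mathcal{L}(^{n}\ell_{1};\ell_{2})$ in $\bigcap_{\lambda}\mathcal{M}_{p}^{\lambda}$ is automatic, while finiteness of $\sup_{\lambda}\|T\|_{\mathcal{M}_{p}^{\lambda}}$ is not. But your proposed remedy does not work. If $\Lambda_{O}$ has a least element $\lambda_{\min}$, then $\mathcal{M}^{\lambda_{\min}}$ is itself the desired lower bound and the intersection construction is unnecessary; this case is trivial and tells us nothing about the general one. In the general case, the phrase ``replace $O$ by a cofinal subchain along which the Grothendieck constants stabilize'' is not a procedure: nothing guarantees such a subchain exists, and no diagonalization produces one. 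Moreover, a lower bound for a proper subchain need not be a lower bound for $O$ unless the subchain is coinitial, and along any coinitial subchain the constants $K_{\lambda}$ may still be unbounded. So this step, as written, fails.

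For comparison, the paper does not isolate Grothendieck as a separate obstacle at all: it takes $\mathcal{I}_{p}$ to be the bare intersection (without your finite-sup clause), argues that the limit norm exists via the bound $\|T\|_{\mathcal{M}_{p}^{\lambda}}\leq\|T\|_{d,p}$, and then writes ``The rest of the proof follows the lines of the previous proof.'' If you drop the finite-sup restriction from your definition of $\mathcal{U}_{p}$ you match the paper verbatim; the difficulty you flagged is then absorbed into whether the limit norm is finite on all of $\mathcal{I}_{p}$, a point on which the paper's justification (the $\|T\|_{d,p}$ bound, which literally applies only when $T\in\mathcal{L}_{d,p}$) is itself rather brief.
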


\begin{proof}
Consider the set $D$ as in the proof of the above theorem and the partial
order%
\[
\mathcal{M}^{\lambda_{2}}\leq\mathcal{M}^{\lambda_{1}}\Leftrightarrow
\mathcal{M}_{p}^{\lambda_{1}}\subseteq\mathcal{M}_{p}^{\lambda_{2}%
}\ \text{and}\ \left\Vert \cdot\right\Vert _{\mathcal{M}_{p}^{\lambda_{2}}%
}\leq\left\Vert \cdot\right\Vert _{\mathcal{M}_{p}^{\lambda_{1}}}\text{ for
all }p\geq1.
\]

Let also $O\subset D$ and $\Lambda_{O}$ be as before. Define%
\[
\mathcal{I}=(\mathcal{I}_{p})_{p\geq1}%
\]
where, for all $p\geq1$, $\mathcal{I}_{p}=\bigcap\limits_{\lambda\in
\Lambda_{O}}^{{}}\mathcal{M}_{p}^{\lambda}$. Note that, for all $p\geq1$, if
$T\in\mathcal{I}_{p}\left(  E_{1},...,E_{n};F\right)  ,$ then%
\begin{equation}
\left\Vert T\right\Vert _{\mathcal{I}_{p}}=\lim_{\lambda\in\Lambda_{O}%
}\left\Vert T\right\Vert _{\mathcal{M}_{p}^{\lambda}} \label{normini}%
\end{equation}
defines a norm in $\mathcal{I}_{p}\left(  E_{1},...,E_{n};F\right)  $. In
fact, from our hypotheses, for each $p\geq1$, the inclusion
\[
inc:\mathcal{L}_{d,p}(E_{1},...,E_{n};F)\rightarrow\mathcal{M}_{p}^{\lambda
}(E_{1},...,E_{n};F)
\]
has norm $\leq1$ for all $\lambda\in\Lambda_{O}.$ So, it follows that
$\left\{  \left\Vert T\right\Vert _{\mathcal{M}_{p}^{\lambda}}:\lambda
\in\Lambda_{O}\right\}  $ is bounded from above by $\left\Vert T\right\Vert
_{d,p}$, and so the limit in $\left(  \ref{normini}\right)  $ exists$.$
Moreover, for all $p\geq1$, the inclusion
\[
inc:\mathcal{M}_{p}^{\lambda}(E_{1},...,E_{n};F)\rightarrow\mathcal{L}%
_{sm,p}(E_{1},...,E_{n};F)
\]
has norm $\leq1$ for every$\ \lambda\in\Lambda_{O}$. Hence
\[
\left\Vert T\right\Vert _{\mathcal{M}_{p}}\geq\left\Vert T\right\Vert
_{sm,p}\geq0\ \text{for all}\ T\in\mathcal{M}_{p}^{\lambda}\left(
E_{1},...,E_{n};F\right)
\]
and so
\[
\left\Vert T\right\Vert _{\mathcal{I}_{p}}=0\text{ if and only if }T=0.
\]
The other properties are easily verified and hence $\left(  \mathcal{I}%
_{p}\left(  E_{1},...,E_{n};F\right)  ,\left\Vert \cdot\right\Vert
_{\mathcal{I}_{p}}\right)  $ is a normed space for all $E_{1},...,E_{n},F$.
The rest of the proof follows the lines of the previous proof.
\end{proof}

\section{Open Problems}

From the previous section two open problems arise:\bigskip

\begin{problem}
Is $(\mathcal{L}_{sm,p})_{p\geq1}$ a desired ideal?(we conjecture that it is
not) If the answer is positive, it will be maximal.
\end{problem}

\begin{problem}
Is $(\mathcal{L}_{ss,p})_{p\geq1}$ a maximal or minimal desired ideal?
\end{problem}

The answer to the next problem seems to be \textquotedblleft
NO\textquotedblright, but to the best of our knowledge, it is an open problem:

\begin{problem}
Is $(\mathcal{L}_{ss,p})_{p\geq1}=(\mathcal{L}_{sm,p})_{p\geq1}$?
\end{problem}

For the next problem that we will propose, we need to define a quite
artificial multilinear version of absolutely summing operators.

Let $n$ be a positive integer, $k\in\{1,...,n\}$ and $p\geq1$. An $n$-linear
operator $T\in\mathcal{L}(E_{1},...,E_{n};F)$ is $k$-absolutely $p$--summing
if there is a constant $C\geq0$ so that%
\begin{equation}
\left(
%TCIMACRO{\dsum \limits_{j_{k}=1}^{\infty}}%
%BeginExpansion
{\displaystyle\sum\limits_{j_{k}=1}^{\infty}}
%EndExpansion
\left\Vert T(x_{j_{1}},...,x_{j_{k}},...,x_{j_{n}})\right\Vert ^{p}\right)
^{1/p}\leq C\left\Vert x_{j_{1}}\right\Vert \ldots\left\Vert \left(  x_{j_{k}%
}\right)  _{j_{k}=1}^{\infty}\right\Vert _{w,p}\ldots\left\Vert x_{j_{n}%
}\right\Vert \label{as}%
\end{equation}
for all $x_{j_{i}}\in E_{i}$ with $i=1,...,k-1,k+1...,n$ and all $\left(
x_{j_{k}}\right)  _{j_{k}=1}^{\infty}$ $\in l_{p}^{w}\left(  E_{k}\right)  .$
In this case we write $T\in\mathcal{L}_{s,p}^{k}(E_{1},...,E_{n};F).$ The
infimum of the $C$ satisfying $\left(  \ref{as}\right)  $ defines a norm for
$\mathcal{L}_{s,p}^{k}(E_{1},...,E_{n};F),$ denoted by $\left\Vert
\cdot\right\Vert _{ks,p}.$ These maps were essentially introduced in
\cite{port} as an \ example of an artificial generalization of absolutely
summing operators.

Now, consider the new class, $\mathcal{L}_{qs,p}$, whose components we will
call quasi-absolutely $p$-summing:
\[
\mathcal{L}_{qs,p}(E_{1},...,E_{n};F):=\bigcap\limits_{k=1}^{n}\mathcal{L}%
_{s,p}^{k}(E_{1},...,E_{n};F).
\]

Defining
\begin{equation}
\left\Vert T\right\Vert _{qs,p}:=\max_{k}\left\Vert T\right\Vert _{ks,p},
\label{nor1}%
\end{equation}
we get a norm for $\mathcal{L}_{qs,p}(E_{1},...,E_{n};F)$ and $(\mathcal{L}%
_{qs,p},\left\Vert .\right\Vert _{qs,p})$ is a Banach ideal.

The ideal $(\mathcal{L}_{qs,p},\left\Vert .\right\Vert _{qs,p})$ is not
interesting since it is the linear ideal in a nonlinear disguise. So, it is
interesting to show that this class is not a desired family. In order to to
this it is necessary to answer the following problem:

\begin{problem}
Is $(\mathcal{L}_{sm,p})_{p\geq1}=(\mathcal{L}_{qs,p})_{p\geq1}?$
\end{problem}

Note that it is plain that $\mathcal{L}_{sm,p}\subset\mathcal{L}_{qs,p}$ and
the inclusion has norm $\leq1$.

Any answer to the above problem will lead to very important conclusions:

- If the answer to the above problem is YES (we conjecture that this is not),
then we have several serious bits of information: (i) the equality is
nontrivial and the result will be interesting by its own; (ii) we conclude
that $(\mathcal{L}_{sm,p})_{p\geq1}$ is a (maximal) desired ideal and (the
more important) we conclude that $(\mathcal{L}_{sm,p})_{p\geq1}$ indeed
possesses very nice properties. For example, besides the inclusion theorem
(which was unknown for this class), since every linear coincidence situation
$\Pi_{p}(E;F)=\mathcal{L}(E;F)$ is naturally extended to $\mathcal{L}%
_{qs,p}(^{n}E;F)=\mathcal{L}(^{n}E;F)$, so we will also have $\mathcal{L}%
_{sm,p}(^{n}E;F)=\mathcal{L}(^{n}E;F)$ and with all this information in hand,
it would be natural to consider $(\mathcal{L}_{sm,p})_{p\geq1}$ as the
"perfect" generalization of $\left(  \Pi_{p}\right)  _{p\geq1}$than
$(\mathcal{L}_{m,p})_{p\geq1}.$

- If the answer to the above problem is NO (which we conjecture), then we
conclude that $(\mathcal{L}_{qs,p})_{p\geq1}$ is not a desired class, a
reasonable situation, since the class $(\mathcal{L}_{qs,p})_{p\geq1}$ is
artificially constructed.

%\begin{acknowledgement}
%The authors expresses their warm thanks to Professor Diestel for the
%encouragement for writing this paper and also for many suggestions/corrections
%that substantially improved its final presentation.
%\end{acknowledgement}


\begin{thebibliography}{99}                                                                                               %


\bibitem {AlencarMatos}R. Alencar and M. C. Matos, Some classes of multilinear
mappings between Banach spaces, Publicaciones del Departamento de An\'{a}lisis
Matem\'{a}tico 12, Universidad Complutense Madrid, (1989).

\bibitem {arregui}J. L. Arregui and O. Blasco, $(p,q)$-summing sequences, J.
Math. Anal. Appl. \textbf{274} (2002), 812-827.

\bibitem {Banach32}S. Banach, Th\'{e}orie des op\'{e}rations lin\'{e}aires,
PWN, 1932.

\bibitem {bla}O. Blasco, G. Botelho, D. Pellegrino and P. Rueda, Summability
of multilinear mappings: Littlewood, Orlicz and beyond, to appear in
Monatshefte f\"{u}r Mathematik, DOI: 10.1007/s00605-010-0209-9.

\bibitem {bla2}O. Blasco, G. Botelho, D. Pellegrino and P. Rueda, Lifting
summability properties for multilinear mappings, preprint.

\bibitem {BH}H. F. Bohnenblust and E. Hille, On the absolute convergence of
Dirichlet series, Ann. Math. \textbf{32} (1931), 600-622.

\bibitem {bombal}F. Bombal, D. Per\'{e}z-Garc\'{\i}a and I. Villanueva,
Multilinear extensions of Grothendieck's theorem, Q. J. Math. \textbf{55}
(2004), 441-450.

\bibitem {irish}G. Botelho, Cotype and absolutely summing multilinear mappings
and homogeneous polynomials, Proc. Roy. Irish Acad Sect. A \textbf{97} (1997), 145-153.

\bibitem {note}G. Botelho, Ideals of polynomials generated by weakly compact
operators, Note Mat. \textbf{25} (2005/2006), 69-102.

\bibitem {botstudia}G. Botelho, H.-A. Braunss, H. Junek and D. Pellegrino,
Holomorphy types and ideals of multilinear mappings, Studia Math. \textbf{177}
(2006), 43-65.

\bibitem {botpams}G. Botelho, H.-A. Braunss, H. Junek and D. Pellegrino,
Inclusions and coincidences for multiple summing multilinear mappings, Proc.
Amer. Math. Soc.\ \textbf{137} (2009), 991-1000.

\bibitem {bd}G. Botelho, D. Diniz and D. Pellegrino, Lineability of the set of
bounded linear non-absolutely

\bibitem {}summing operators, J. Math. Anal. Appl. \textbf{357} (2009), 171-175.

\bibitem {michels}G. Botelho, C. Michels and D. Pellegrino, Complex
interpolation and summability properties \ of multilinear operators, Rev.
Matem. Complutense \textbf{23} (2010), 139-161.

\bibitem {indagationes}G. Botelho and D. Pellegrino, Two new properties of
ideals of polynomials and applications, Indag. Math. (N.S.) \textbf{16}
(2005), 157--169.

\bibitem {PAMS}G. Botelho and D. Pellegrino, Scalar-valued dominated
polynomials on Banach spaces, Proc. Amer. Math. Soc.\ \textbf{134} (2006), 1743-1751.

\bibitem {unc}G. Botelho and D. Pellegrino, Absolutely summing operators on
Banach spaces with no unconditional basis, J. Math. Anal. Appl. \textbf{321}
(2006), 50-58.

\bibitem {port}G. Botelho and D. Pellegrino, Coincidence situations for
absolutely summig non-linear mappings, Port. Math. \textbf{64} (2007), 175-191.

\bibitem {enama}G. Botelho and D. Pellegrino, Coincidences for multiple
summing mappings, In: Segundo Enama, 27--28,. Jo\~{a}o Pessoa, Brazil (2008)

\bibitem {botp}G. Botelho and D. Pellegrino, When every multilinear mapping is
multiple summing, Math. Nachr. \textbf{282} (2009), 1414-1422.

\bibitem {belg}G. Botelho and D. Pellegrino, Absolutely summing operators into
spaces with no finite cotype, Bulletin of the Belgian Mathematical Society
Simon Stevin \textbf{16} (2009), 373-378.


\bibitem {PAMS2}G. Botelho, D. Pellegrino and P. Rueda, Dominated polynomials
on infinite-dimensional spaces, Proc. Amer. Math. Soc. \textbf{138} (2010), 209-216.

\bibitem {BPRn}G. Botelho, D. Pellegrino and P. Rueda, A unified Pietsch
Domination Theorem, J. Math. Anal. Appl. \textbf{365} (2010), 269-276.

\bibitem {jap}G. Botelho, D. Pellegrino and P. Rueda, Dominated bilinear forms
and $2$-homogeneous polynomials, Publ. Res. Inst. Math. Sci. \textbf{46}
(2010), 201-208.

\bibitem {PellZ}G. Botelho, D. Pellegrino and P. Rueda, Cotype and absolutely
summing linear operators, to appear in Math. Z. doi: 10.1007/s00209-009-0591-y

\bibitem {CD}E. \c{C}al\i\c{s}kan and D. M. Pellegrino, On the multilinear
generalizations of the concept of absolutely summing operators, Rocky Mountain
J. Math. \textbf{37} (2007), 1137-1154.

\bibitem {muro}D. Carando, V. Dimant and S. Muro, Coherent sequences of
polynomials ideals on Banach spaces, Math. Nachr. \textbf{282} (2009), 1111-1133.

\bibitem {cds}D. Carando, V. Dimant and P. Sevilla-Peris, Ideals of
multilinear forms - a limit order approach, Positivity \textbf{11} (2007), 589-607.


\bibitem {cg}R. Cilia and J. Guti\'{e}rrez, Dominated, diagonal polynomials on
$\ell_{p}$ spaces, Arch. Math. \textbf{84} (2005), 421-431.

\bibitem {davis}W.J. Davis and W.B. Johnson, Compact non-nuclear operators,
Studia Math. \textbf{51} (1974), 81--85

\bibitem {DF}A. Defant and K. Floret, Tensor norms and operator ideals,
North-Holland Mathematics Studies 176, North-Holland, 1993.

\bibitem {de3}A. Defant, M. Masty\l o, Composition of $(E,2)$-summing
operators, Studia Math. \textbf{159} (2003) 51--65.


\bibitem {andreas david}A. Defant and D. P\'{e}rez-Garc\'{\i}a, A tensor norm
preserving unconditionality for $L_{p}$-spaces, Trans. Amer. Math. Soc.
\textbf{360} (2008), 3287-3306.

\bibitem {sevilla}A. Defant and P. Sevilla-Peris, A new multilinear insight on
Littlewood's 4/3-inequality, J. Funct. Anal. \textbf{256} (2009), 1642-1664.

\bibitem {Def2}A. Defant, D. Popa and U. Schwarting, Coordenatewise multiple
summing operators on Banach spaces, J. Funct. Anal. \textbf{259} (2010), 220-242.

\bibitem {resumeD}J. Diestel, J. H. Fourie and J. Swart, The metric theory of
tensor products, Grothendieck's R\'{e}sum\'{e} revisited, American
Mathematical Society, 2008.

\bibitem {djt}J. Diestel, H. Jarchow and A. Tonge, Absolutely summing
operators, Cambridge University Press, 1995.

\bibitem {dimant}V. Dimant, Strongly $p$-summing multilinear operators, J.
Math. Anal. Appl. \textbf{278} (2003), 182-193.

\bibitem {Di}S. Dineen, Complex Analysis on Infinite Dimensional Spaces,
Springer-Verlag, London, 1999.


\bibitem {DPR}E. Dubinsky, A. Pe\l czy\'{n}ski and H. P. Rosenthal, On Banach
spaces $X$ for which $\Pi_{2}(L_{\infty},X)=B(L_{\infty},X)$, Studia Math.
\textbf{44} (1972), 617-648.


\bibitem {DR}A. Dvoretzky and C.A. Rogers, Absolute and unconditional
convergence in normed linear spaces, Proc. Nat. Acad. Sci. USA \textbf{36}
(1950), 192-197.

\bibitem {fmat}K. Floret and M.C. Matos, Applications of a Khinchine
inequality to holomorphic mappings, Math. Nachr. \textbf{176} (1995), 65-72

\bibitem {geisse}H. Geiss, Ideale multilinearer Abbildungen, Diplomarbeit,
Brandenburgische Landeshochschule, 1985.

\bibitem {Gro1953}A. Grothendieck, R\'{e}sum\'{e} de la th\'{e}orie metrique
des produits tensoriels topologiques, Bol. Soc. Mat. S\~{a}o Paulo \textbf{8}
(1953/1956), 1-79.

\bibitem {Gro1955}A. Grothendieck, Produits tensoriels topologiques et espaces
nucl\'{e}aires, Memoirs Acad. Math. Soc. \textbf{16}, 1955.

\bibitem {HJ}J. Hoffmann-J\o rgensen, Sums of independent Banach space valued
random variables, Studia Math. \textbf{52} (1974), 159--186.

\bibitem {Jar}H. Jarchow, C. Palazuelos, D. P\'{e}rez-Garc\'{\i}a and I.
Villanueva, Hahn-Banach extension of multilinear forms and summability, J.
Math. Anal. Appl. \textbf{336} (2007), 1161-1177.

\bibitem {junek}H. Junek, M.C. Matos and D. Pellegrino, Inclusion theorems for
absolutely summing holomorphic mappings, Proc. Amer. Math. Soc. \textbf{136}
(2008), 3983-3991.

\bibitem {timoney}D. Kitson and R. Timoney, Spaceability and operator ranges, preprint.

\bibitem {ko4}H. K\"{o}nig, J.R. Retherford, N. Tomczak-Jaegermann, On the
eigenvalues of $(p,2)$-summing operators and constants associated to normed
spaces, J. Funct. Anal. \textbf{37} (1980) 149--168.

\bibitem {ku}T. K\"{u}hn and M. Masty\l o, Products of operator ideals and
extensions of Schatten classes, Math. Nachr. \textbf{283} (2010), 891-901.

\bibitem {ku2}T. K\"{u}hn and M. Masty\l o, Weyl numbers and eigenvalues of
abstract summing operators, J. Math. Anal. Appl. \textbf{369} (2010), 408-422.

\bibitem {K22}S. Kwapie\'{n}, Isomorphic characterizations of inner product
spaces by orthogonal series with vector valued coefficients, Studia Math.
\textbf{44} (1972), 583--595.

\bibitem {lp}J. Lindenstrauss and A. Pe\l czy\'{n}ski, Absolutely summing
operators in $\mathcal{L}_{p}$ spaces and their applications, Studia Math.
\textbf{29} (1968), 275-326.

%\bibitem {complutense}M.C. Matos, On multilinear mappings of nuclear type,
%Rev. Mat. Univ. Complut. Madrid 6 (1993), 61--81.


%\bibitem {anais}M.C. Matos. Absolutely summing holomorphic mappings, An. Acad.
%Brasil. Ci\^{e}nc. 68 (1996), 1-13.


\bibitem {LLL}J.E. Littlewood, On bounded bilinear forms in an infinite number
of variables, Quart. J. (Oxford Ser.) \textbf{1 }(1930), 164--174.

%\bibitem {Mac}M.S. Macphail, absolute and unconditional convergence, Bull.
%Amer. Math. Soc. 53 (1947), 121-123.


\bibitem {rr}M.C. Matos, Strictly absolutely summing multilinear mappings,
Relat\'{o}rio T\'{e}cnico 03/92, Unicamp, 1992.

\bibitem {anais}M. Matos, Absolutely summing holomorphic mappings, Anais da
Academia Brasileira de Ci\^{e}ncias \textbf{68} (1996), 1-13.

\bibitem {collec}M.C. Matos. Fully absolutely summing and Hilbert-Schmidt
multilinear mappings, Collectanea Math. \textbf{54} (2003), 111-136.

\bibitem {nachmatos}M.C. Matos, Nonlinear absolutely summing mappings, Math.
Nachr. \textbf{258} (2003), 71-89.

%\bibitem {matosdaniel}M. C. Matos and D. Pellegrino. Fully summing mappings
%between Banach spaces, Studia Math. 178 (2007), 47-61.


\bibitem {Mau}R.D. Mauldin, The Scottish book, Birkhauser, 1982.

\bibitem {Ma2}B. Maurey, Th\'{e}or\'{e}mes de factorisation pour les
op\'{e}rateurs lin\'{e}aires \`{a} valeurs dans les espaces $L_{p}$,
Ast\'{e}risque, No. 11, Soci\'{e}t\'{e} Math\'{e}matique de France, Paris, 1974.

\bibitem {pisier}B. Maurey and G. Pisier, S\'{e}ries de variables aleatoires
vectorielles ind\'{e}pendantes et propriet\'{e}s g\'{e}ometriques des espaces
de Banach, Studia Math. \textbf{58} (1976), 45-90.

\bibitem {MT}Y. Mel\'{e}ndez and A. Tonge, Polynomials and the Pietsch
Domination Theorem, Proc. Roy. Irish Acad Sect. A \textbf{99} (1999), 195-212.

\bibitem {mss}L. Mezrag and K. Saadi, Inclusion theorems for Cohen stringly
summing multilinear operators, Bull. Belg. Math. Soc. Simon Stevin \textbf{16}
(2009), 1-11.

\bibitem {MPel}B. Mitiagin and A. Pe\l czy\'{n}ski, Nuclear operators and
approximative dimensions, Proceedings International Congress of Mathematics,
Moscow 1966.

\bibitem {Mu}J. Mujica, Complex analysis in Banach spaces, North-Holland
Mathematics Studies 120, North-Holland, 1986.

\bibitem {danielstudia}D. Pellegrino, Cotype and absolutely summing
homogeneous polynomials in $L_{p}$ spaces, Studia Math. \textbf{157} (2003), 121-131.

\bibitem {irishd}D. Pellegrino, Cotype and nonlinear absolutely summing
mappings, Proceedings of the Royal Irish Academy Section A-Mathematical and
Physical Sciences \textbf{105(A) }(2005), 75-91.

\bibitem {pss}D. Pellegrino, J. Santos and J.B. Seoane-Sep\'{u}lveda, Some
techniques on nonlinear analysis and applications, arXiv:1006.0536.

\bibitem {psseo}D. Pellegrino and J.B. Seoane-Sep\'{u}lveda, Improving the
constants for the real and complex Bohnenblust-Hille inequality, arXiv:1010.0461.

\bibitem {spp}D. Pellegrino and M. Souza, Fully summing multilinear and
holomorphic mappings into Hilbert spaces, Math. Nachr. \textbf{278} (2005), 877-887.

\bibitem {tesina}D. P\'{e}rez-Garc\'{\i}a, Operadores multilineales
absolutamente sumantes, Dissertation, 2002

\bibitem {pgtese}D. P\'{e}rez-Garc\'{\i}a, Operadores multilineales
absolutamente sumantes, Thesis, 2003.

\bibitem {davidstudia}D. P\'{e}rez-Garc\'{\i}a, The inclusion theorem for
multiple summing operators, Studia Math. \textbf{165} (2004), 275-290.

\bibitem {trace}D. P\'{e}rez-Garc\'{\i}a, The Trace Class is a Q-algebra. Ann.
Acad. Sci. Fenn. Math. \textbf{31} (2006), 287-295.

\bibitem {davidarchiv}D. P\'{e}rez-Garc\'{\i}a, Comparing different classes of
absolutely summing multilinear operators, Arch. Math. \textbf{85} (2005), 258-267.

%\bibitem {monat}D. P\'{e}rez-Garc\'{\i}a, A composition theorem for multiple
%summing operators, Monatsh. Math. 146 (2005), 257-261.


%\bibitem {fenn}D. P\'{e}rez-Garc\'{\i}a, The trace class is a $Q$-algebra,
%Ann. Acad. Sci. Fenn. Math. 31 (2006), 287-295.


\bibitem {jmaa}D. P\'{e}rez-Garc\'{\i}a and I. Villanueva. Multiple summing
operators on Banach spaces, J. Math. Anal. Appl. \textbf{285} (2003), 86-96.

\bibitem {32}D. P\'{e}rez-Garc\'{\i}a and I. Villanueva, There is no lattice
preserving natural tensor norm, Quaest. Math. \textbf{27} (2004), 267--273.

%\bibitem {david et al}D. P\'{e}rez-Garc\'{\i}a, M. M. Wolf, C. Palazuelos, I.
%Villanueva and M. Junge, Unbounded violation of tripartite Bell inequalities,
%Commun. Math. Phys. 279 (2008), 455-486.


\bibitem {mono}A. Pietsch, Operator Ideals, Deutscher Verlag der Wiss, 1978
and North Holland, Amsterdam, 1980.

\bibitem {stu}A. Pietsch, Absolut $p$-summierende Abbildungen in normieten
R\"{a}umen, Studia Math. \textbf{27} (1967), 333-353.

\bibitem {PPPP}A. Pietsch. Ideals of multilinear functionals, Proceedings of
the Second International Conference on Operator Algebras, Ideals and Their
Applications in Theoretical Physics, 185-199, Teubner-Texte, Leipzig, 1983.

\bibitem {pp22}A. Pietsch, Ideals of multilinear functionals,
Forschungsergebnisse, Friedrich Schiller Universit\"{a}t, Jena, 1983.

\bibitem {18}G. Pisier, Counterexamples to a conjecture of Grothendieck, Acta
Math. \textbf{151} (1983), 181-208.

\bibitem {popa}D. Popa, Reverse inclusions for multiple summing operators, J.
Math. Anal. Appl. \textbf{350} (2009), 360-368.

\bibitem {seo}D. Puglisi and J. B. Seoane-Sep\'{u}lveda, \textit{Bounded
linear non-absolutely summing operators}, J. Math. Anal. Appl. \textbf{338}
(2008), 292-298.

\bibitem {Que}H. Queff\'{e}lec, H. Bohr%
%TCIMACRO{\U{b4}}%
%BeginExpansion
\'{}%
%EndExpansion
s vision of ordinary Dirichlet series: old and new results, Jornal of Analysis
\textbf{3} (1995), 43-60.

\bibitem {Ram}M.S. Ramanujan and E. Schock, Operator ideals and spaces of
bilinear operators, Linear and Multilinear Algebra \textbf{18} (1985), 307--318.

\bibitem {sch}B. Schneider, On absolutely p-summing and related multilinear
mappings, Brandenburgische Landeshochschule Wissenschaftliche Zeitschrift
\textbf{35} (1991), 105-117.

\bibitem {souza}M. Souza, Aplica\c{c}\~{o}es multilineares completamente
absolutamente somantes, Thesis, Universidade Estadual de Campinas, UNICAMP, 2003.

\bibitem {T1}M. Talagrand, Cotype and $(q,1)$-summing norms in Banach spaces,
Invent. Math. \textbf{110} (1992), 545-556.

%\bibitem {ryan}R. Ryan, Introduction to tensor products of Banach spaces,
%Springer-Verlag, London, 2002.


{\small \vspace*{-0.6em} }
\end{thebibliography}
\end{document}